\makeatletter\@addtoreset {equation}{section}\makeatother
\theoremstyle{plain}
\newtheorem{definition}{Definition}[section]
\newtheorem{theorem}[definition]{Theorem}
\newtheorem{corollary}[definition]{Corollary}
\newtheorem{lemma}[definition]{Lemma}
\newtheorem{remark}[definition]{Remark}
\DeclareMathOperator{\sech}{sech}
\newcommand{\norm}[2]{\lVert{#1}\rVert_{#2}}
\newcommand{\abs}[1]{\lvert{#1}\rvert}
\begin{document}

\title[Drift of spectrally stable shifted states on star graphs]
{Drift of spectrally stable shifted states on star graphs}

\author{Adilbek Kairzhan}
\address{Department of Mathematics, McMaster University, Hamilton, Ontario  L8S 4K1, Canada}
\email{kairzhaa@math.mcmaster.ca}

\author{Dmitry E. Pelinovsky}
\address{Department of Mathematics, McMaster University, Hamilton, Ontario  L8S 4K1, Canada}
\email{dmpeli@math.mcmaster.ca}

\author{Roy H. Goodman}
\address{Department of Mathematical Sciences, New Jersey Institute of Technology, Newark, NJ 07102, USA}
\email{goodman@njit.edu}

\date{\today}

\begin{abstract}
When the coefficients of the cubic terms match the coefficients in the boundary conditions at a vertex
of a star graph and satisfy a certain constraint, the nonlinear Schr\"{o}dinger (NLS) equation on the star graph
can be transformed to the NLS equation on a real line. Such balanced star graphs have appeared in the context
of reflectionless transmission of solitary waves. Steady states on such balanced star graphs can be translated along
the edges with a translational parameter and are referred to as the shifted states.
When the star graph has exactly one incoming edge and several outgoing edges, the steady states are spectrally stable
if their monotonic tails are located on the outgoing edges. These spectrally stable states are degenerate
minimizers of the action functional with the degeneracy due to the translational symmetry.
Nonlinear stability of these spectrally stable states has been an open problem up to now. In this paper,
we prove that these spectrally stable states are nonlinearly unstable because of the irreversible
drift along the incoming edge towards the vertex of the star graph.
When the shifted states reach the vertex as a result of the drift,
they become saddle points of the action functional, in which case the nonlinear instability
leads to their destruction. In addition to rigorous mathematical results,
we use numerical simulations to illustrate the drift instability and destruction of the shifted states
on the balanced star graph.
\end{abstract}

\maketitle


\section{Introduction}

{\em The classical Lyapunov method} establishes stability of minimizers of energy in the time flow
of a dynamical system under the condition that the second variation of energy is strictly positive definite.
In the context of Hamiltonian PDEs such as the nonlinear Schr\"{o}dinger (NLS) equation,
standing waves are often saddle points of energy but additional conserved quantities such as
mass and momentum exist due to symmetries such as phase rotation and space translation.
It is now a classical result \cite{GSS87,W86} that the standing waves are stable if
they are constrained minimizers of energy when other
conserved quantities are fixed and if the second variation of energy is strictly positive definite
under the constraints eliminating symmetries.

Stability of standing waves in the presence of symmetries is understood in the sense of {\em orbital stability},
where the orbit is defined by a set of parameters along the symmetry group.
Fixed values of other conserved quantities are realized by Lagrange multipliers which
define another set of parameters. Both sets of parameters (along the
symmetry group and Lagrange multipliers) satisfy the modulation equations in the time flow of the Hamiltonian PDE \cite{W87}.
The actual values of parameters along the symmetry group are irrelevant in the definition
of an orbit for the standing waves, whereas the actual values of Lagrange multipliers and
the remainder terms are controlled if the second variation of energy is strictly positive
under the symmetry constraints.

The present work is devoted to stability of standing waves in the NLS equation defined on a metric graph,
a subject that has seen many recent developments \cite{Noja}. Existence and variational characterization
of standing waves was developed for star graphs \cite{AdamiJPA,AdamiAH,AdamiJDE1,AdamiJDE2} and
for general metric graphs \cite{AdamiCV,AdamiJFA,AST,CDS,D18}. Bifurcations and stability of standing waves
were further explored for tadpole graphs \cite{NPS}, dumbbell graphs \cite{G19,MP16}, double-bridge graphs
\cite{NRS}, and periodic ring graphs \cite{D19,GilgPS,P18,PS17}. A variational characterization of standing waves was developed
for graphs with compact nonlinear core \cite{T1,T2,T3}.

In the context of the simplest {\em star graphs}, it was realized in \cite{AdamiJFA} that the infimum of energy
under the fixed mass is approached by a sequence of solitary waves escaping to infinity along one edge of the star graph.
Consequently, standing waves cannot be energy minimizers, as they represent saddle points of energy
under the fixed mass \cite{AdamiJPA}. It was shown in \cite{KP1} that the second variation of energy at these
standing waves is nevertheless positive but degenerate with a zero eigenvalue, hence the standing waves
are saddle points beyond the second variation of energy. Orbital instability of these standing waves
under the time flow of the NLS equation is developed as a result of the saddle point geometry \cite{KP1}.

{\em Balanced star graphs} were introduced in \cite{M1,M2} from the condition that
the NLS equation on the metric graph reduces to the NLS equation on a real line
if the initial conditions satisfy certain symmetry.
Consequently, solitary waves may propagate across the vertex without any reflection.
Standing waves of the NLS equation can be translated along edges of the balanced star graph with a translational parameter
and are referred to as {\em the shifted states}. When the star graph has exactly one incoming edge and
several outgoing edges, the shifted states were shown to be spectrally stable if
their monotonic tails are located on the outgoing edges \cite{KP2}. Moreover,
these shifted states are constrained minimizers of the energy under  fixed mass
and the only degeneracies of the second variation of energy are due to
phase rotation and the spatial translation along the balanced star graph.

Standing waves are orbitally stable in the NLS equation on a real line
since the two degeneracies are related to two symmetries of the NLS equation
which also conserves mass and momentum. In contrast to this well-known result, we show
in this paper that {\em the shifted states are orbitally unstable in the NLS equation
on the balanced star graph}.

The instability is related to the following observation. If the initial
perturbation to the standing wave is symmetric with respect to the exchange of
components on the outgoing edges, then the reduction to the NLS equation on a line holds,
and the solution has translational symmetry. Perturbations that lack this exchange symmetry
also break the translation symmetry and the solution fails to conserve momentum.
Moreover, the value of the momentum functional increases monotonically in the time flow of the NLS
equation and this monotone increase results in the irreversible drift
of the shifted state along the incoming edge towards the outgoing edges of the balanced star graph.
When the center of mass for the shifted state reaches the vertex, the shifted state becomes a saddle point of
energy under the fixed mass. At this point in time, orbital instability of the shifted state develops
as a result of the saddle point geometry similar to the instability studied in \cite{KP1}.

The main novelty of this paper is to show that {\em degeneracy of the positive second variation
of energy may lead to orbital instability of constrained minimizers if this degeneracy is not
related to the symmetry of the Hamiltonian PDE}. The orbital instability appears due to
irreversible drift of shifted states from a spectrally stable state towards the spectrally unstable states.
We prove rigorously the conjecture posed in the previous work \cite{KP2}
and confirm numerically the instability of the shifted states on the balanced star graphs.

The paper is structured as follows. Section \ref{sec-main} presents the background material
and the main results of this work. Section \ref{sec-linear} collects together the linear estimates.
Section \ref{sec-theorem-1} gives the proof of the irreversible drift along the spectrally stable shifted states.
Section \ref{sec-theorem-2} gives the proof of nonlinear instability of the limiting shifted state (called {\em the half-soliton state})
due to the saddle point geometry. Section \ref{sec-numerics} illustrates the analytical results with numerical
simulations. Section \ref{sec-conclusion} concludes the paper with a summary.

\section{Main results}
\label{sec-main}

We consider a {\em star graph} $\Gamma$ constructed by attaching $N$ half-lines at a common vertex.
In the construction of the graph $\Gamma$, one edge represents an incoming bond
and the remaining $N-1$ edges represent outgoing bonds. We place the vertex at the origin
and parameterize the incoming edge by $\mathbb{R}^-$ and the $N-1$ outgoing edges by $\mathbb{R}^+$.
An illustration of the star graph $\Gamma$ with one incoming and three outgoing edges
is shown on Fig. \ref{fig-1}.

\begin{figure}[htbp] 
   \centering
   \includegraphics[width=4in, height = 2in]{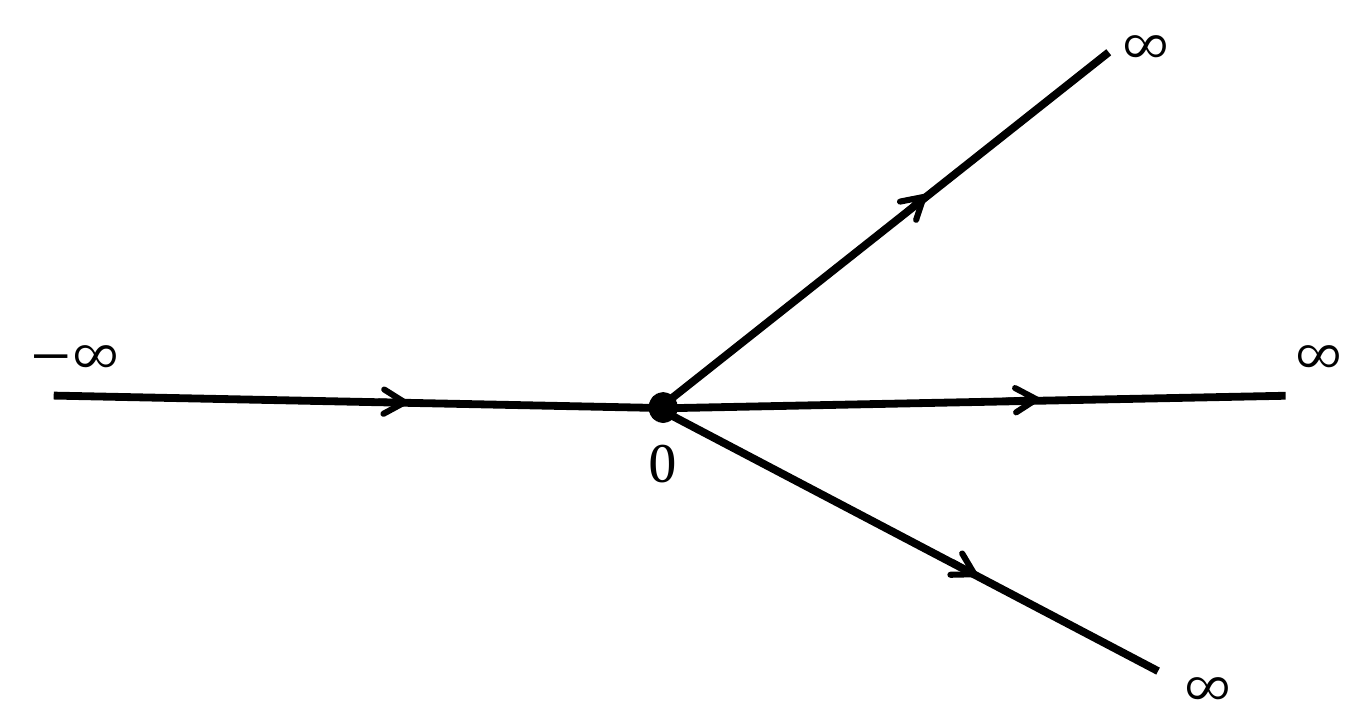}
   \caption{A star graph with $N=4$ edges.}
   \label{fig-1}
\end{figure}

The Hilbert space
$$
L^2(\Gamma) = L^2(\mathbb{R}^-) \oplus \underbrace{L^2(\mathbb{R}^+) \oplus \cdots \oplus L^2(\mathbb{R}^+)}_\text{\rm (N-1) elements}
$$
is defined componentwise on edges of the star graph $\Gamma$.
Sobolev spaces $H_\Gamma^1$ and $H_\Gamma^2$ are also defined componentwise subject
to the generalized Kirchhoff boundary conditions:
\begin{equation}
\label{H1}
H_{\Gamma}^1 := \{ \Psi \in H^1(\Gamma): \quad \alpha_1 \psi_1(0) = \alpha_2 \psi_2(0) = \dots = \alpha_N \psi_N(0)\}
\end{equation}
and
\begin{equation}
\label{H2}
 H_{\Gamma}^2 := \left\{ \Psi \in H^2(\Gamma) \cap H^1_{\Gamma} : \quad
\alpha_1^{-1} \psi_1'(0) = \sum_{j=2}^N \alpha_j^{-1} \psi_j'(0) \right\},
\end{equation}
where derivatives are defined as $\lim_{x\to 0^-}$ for the incoming edge and
$\lim_{x \to 0^+}$ for the $(N-1)$ outgoing edges. The dual space to $H^1_{\Gamma}$ is
$H^{-1}_{\Gamma} \equiv H^{-1}(\Gamma)$, which is also defined componentwise.

We consider a {\em balanced star graph} defined by the following constraint on
the positive coefficients $(\alpha_1, \alpha_2, \cdots, \alpha_N)$ in the boundary conditions (\ref{H1}) and (\ref{H2}):
\begin{equation}
\label{constraint}
\frac{1}{\alpha_1^2} = \sum_{j=2}^N \frac{1}{\alpha_j^2}.
\end{equation}
This constraint was introduced in \cite{M1,M2} as a condition of the
reflectionless transmission of a solitary wave across the vertex
of the star graph $\Gamma$.

The time flow on $\Gamma$ is given by the following nonlinear Schr\"{o}dinger (NLS) equation:
\begin{equation} \label{eq1}
 i \frac{\partial \Psi}{\partial t} = - \Delta \Psi - 2 \alpha^2 |\Psi|^{2} \Psi,
\end{equation}
where $\Psi = \Psi(t,x)$, $\Delta \Psi = (\psi_1'',\psi_2'',\dots,\psi_N'')$ is the Laplacian operator
defined componentwise with primes denoting derivatives in $x$,
$\alpha = (\alpha_1, \alpha_2, \dots, \alpha_N) \in \mathbb{R}^N$
represents the same coefficients as in (\ref{H1}) and (\ref{H2}),
and the nonlinear term $\alpha^2 |\Psi|^{2} \Psi$ is interpreted as a symbol for
$(\alpha_1^2|\psi_1|^{2}\psi_1, \alpha_2^2 |\psi_2|^{2}\psi_2, \dots, \alpha_N^2 |\psi_N|^{2} \psi_N)$.

Local and global well-posedness of the Cauchy problem for the NLS equation (\ref{eq1})
is well-known both for weak solutions in $H^1_{\Gamma}$ and
for strong solutions in $H^2_{\Gamma}$ (see Proposition 2.2 in \cite{AdamiJDE1} and Lemmas 2.2 and 2.3 in \cite{KP2}).
For any of these solutions,
we can define the energy and mass functionals as
\begin{equation}
\label{energy}
E(\Psi) := \| \Psi' \|_{L^2(\Gamma)}^2 - \| \alpha^{\frac{1}{2}} \Psi \|_{L^{4}(\Gamma)}^{4},
\quad Q(\Psi) := \| \Psi \|_{L^2(\Gamma)}^2,
\end{equation}
respectively. These functionals are constants under the time flow of the NLS equation (\ref{eq1}) thanks to the fact that
$\Delta : H^2_{\Gamma} \subset L^2(\Gamma) \to L^2(\Gamma)$ is extended as a
self-adjoint operator in the Hilbert space $L^2(\Gamma)$ (see Lemma 2.1 in \cite{KP2}).

The NLS equation (\ref{eq1}) admit standing wave solutions of the form
$$
\Psi(t,x) = e^{i\omega t} \Phi_{\omega}(x),
$$
where the real-valued pair $(\omega,\Phi_{\omega})$ satisfies the stationary NLS equation,
\begin{equation}\label{eq2}
-\Delta \Phi_{\omega} - 2 \alpha^2 |\Phi_{\omega}|^{2} \Phi_{\omega} = - \omega \Phi_{\omega}, \quad \Phi_{\omega} \in H^2_{\Gamma}.
\end{equation}
The stationary NLS equation is the Euler--Lagrange equation for the action functional
\begin{equation}
\label{lyapunov-funct}
\Lambda_{\omega}(\Psi) := E(\Psi) + \omega Q(\Psi).
\end{equation}
It is also well-known \cite{AdamiJDE1}
that the set of critical points of $\Lambda_{\omega}$ in $H^1_{\Gamma}$ is equivalent
to the set of solutions of the stationary NLS equation in $H^2_{\Gamma}$.

For $\omega > 0$, we can set $\omega = 1$ by employing the following scaling transformation:
\begin{equation}
\label{scal-transf}
\Phi_{\omega}(x) = \omega^{\frac{1}{2}} \Phi(z), \quad z = \omega^{\frac{1}{2}} x.
\end{equation}
The following lemma states the existence of a family of shifted states
in the stationary NLS equation (\ref{eq2}) with the boundary conditions in (\ref{H1}) and
(\ref{H2}), where the coefficients $(\alpha_1, \alpha_2, \dots, \alpha_N)$ satisfy
the constraint (\ref{constraint}).

\begin{lemma}
\label{solutions-II}
For every $(\alpha_1,\alpha_2,\dots,\alpha_N)$ satisfying the constraint (\ref{constraint}), there exists
a unique one-parameter family of solutions $\{ \Phi(x;a) \}_{a \in \mathbb{R}}$ to the stationary NLS equation (\ref{eq2}) with $\omega = 1$,
where each component of $\Phi(x;a)$ is given by
\begin{equation}
\label{soliton-shifted}
\phi_j(x;a) = \alpha_j^{-1} \phi(x+a), \quad 1 \leq j \leq n,
\end{equation}
with $\phi(x) = \sech(x)$.
\end{lemma}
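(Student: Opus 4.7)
My approach is direct verification of the ansatz, together with a uniqueness argument. The plan is to check in order: (i) the ODE on each edge; (ii) the continuity condition (\ref{H1}); (iii) the Kirchhoff-type derivative condition (\ref{H2}), which is the only place the balanced constraint (\ref{constraint}) enters; and (iv) uniqueness of the family among the natural candidates on each half-line.

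For (i), I would start from the standard identity $\phi'' - \phi + 2\phi^3 = 0$ for $\phi(x)=\sech(x)$, which follows at once from $\sech'(x)=-\sech(x)\tanh(x)$ together with $\sech^2+\tanh^2=1$. Substituting $\phi_j(x;a)=\alpha_j^{-1}\phi(x+a)$ into the edge ODE $\phi_j'' - \phi_j + 2\alpha_j^2 \phi_j^3 = 0$ and factoring out $\alpha_j^{-1}$ reduces to this identity evaluated at $x+a$; the cancellation $\alpha_j^2\cdot\alpha_j^{-3}=\alpha_j^{-1}$ is precisely what allows the same profile $\sech$ to solve the equation simultaneously on every edge despite the different coefficients.

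For (ii), evaluation at $x=0$ gives $\alpha_j\phi_j(0;a)=\sech(a)$, independent of $j$, so (\ref{H1}) holds automatically. For (iii), using $\phi_j'(0;a)=-\alpha_j^{-1}\sech(a)\tanh(a)$, the derivative condition (\ref{H2}) reduces to
\begin{equation*}
\alpha_1^{-2}\,\sech(a)\tanh(a) \;=\; \sech(a)\tanh(a)\sum_{j=2}^N \alpha_j^{-2},
\end{equation*}
which holds for every $a\in\mathbb{R}$ precisely when (\ref{constraint}) is in force. I would flag this as the content of the lemma: the balanced constraint is exactly what is needed (and sufficient) for the uniform shifts to match at the vertex. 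This is the ``main obstacle'' in the proof, although in practice it is only a line of algebra; the point is to recognize that (\ref{constraint}) is equivalent to the vertex-derivative matching under the ansatz.

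For (iv), a bound-state solution on the $j$-th half-line of the ODE $\phi_j''-\phi_j+2\alpha_j^2\phi_j^3=0$ must decay at infinity, and a phase-plane argument shows the only nontrivial decaying solutions are $\pm\alpha_j^{-1}\sech(\cdot-b_j)$ for some $b_j\in\mathbb{R}$. The continuity condition forces $\sech(b_j)=\sech(b_1)$ and equal signs across all edges, hence $b_j=\pm b_1$; the derivative condition combined with positivity of each $\alpha_j^{-2}$ and the balanced constraint (\ref{constraint}) then selects the common shift $b_j=b_1$ for every $j$, yielding the single real parameter $a=-b_1$ of (\ref{soliton-shifted}) and establishing that the family is genuinely one-parameter.
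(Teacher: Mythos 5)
Your proposal is correct and follows essentially the same route as the paper: the general decaying solution on each edge is a shifted $\alpha_j^{-1}\sech$, continuity at the vertex forces the shifts to agree up to sign, and the Kirchhoff condition together with the balanced constraint (\ref{constraint}) and the positivity of each $\alpha_j^{-2}$ forces a common shift, leaving the single free parameter $a$. Your explicit verification of the ansatz and the phase-plane justification are harmless additions that the paper leaves implicit, but the core argument is the same.
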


\begin{proof}
The stationary NLS equation (\ref{eq2}) with $\omega = 1$ admits
a general solution $\Phi = (\phi_1, \dots, \phi_N) \in H^2(\Gamma)$ of the form:
$$
\phi_j(x) = \alpha_j^{-1}\phi(x+a_j), \quad j=1,\dots,N,
$$
with $\phi(x) = \sech(x)$. Parameters $(a_1, \dots, a_N) \in \mathbb{R}^N$ are to
be defined by the boundary conditions in (\ref{H1}) and (\ref{H2}).
The continuity condition in (\ref{H1}) implies that
$|a_1| = \dots = |a_N|$, and so, for every
$j=1,\dots, N$, there exists $\sigma_j \in \{-1,1\}$ such that $a_j = \sigma_j a_1$.
Without loss of generality, we choose $\sigma_1 = 1$.
The Kirchhoff condition in (\ref{H2}) implies that, under the constraint (\ref{constraint}),
\begin{equation}
\label{sigma_eqn}
\phi'(a_1) \sum_{j=2}^N \frac{\sigma_j-1}{\alpha_j^2} = 0.
\end{equation}
The equation (\ref{sigma_eqn}) holds if either $\phi'(a_1) = 0$ or
$\sum_{j=2}^N \frac{\sigma_j-1}{\alpha_j^2} = 0$. The first case has a unique solution $a_1=0$.
The second case holds for every $a_1 \in \mathbb{R} \backslash \{0\}$
if and only if for every $j = 2, \dots, N$ we get $\sigma_j=1$, since
$\frac{\sigma_j-1}{\alpha_j^2}$ is either negative or zero. Combining both cases,
we have $a_1=a_2=\dots = a_N=a$, where $a\in \mathbb{R}$ is arbitrary,
as is given by (\ref{soliton-shifted}).
\end{proof}

\begin{remark}
Compared to the parametrization of edges in $\Gamma$ used in our previous work \cite{KP2},
$\phi_1(x)$ is defined here for $x \in \mathbb{R}^-$ while all other $\phi_j(x)$ are defined for $x \in \mathbb{R}^+$.
We also replace parameter $a \in \mathbb{R}$ in \cite{KP2} by $-a \in \mathbb{R}$ for convenience.
\end{remark}

The shifted state (\ref{soliton-shifted}) in Lemma \ref{solutions-II} satisfies the following symmetry.

\begin{definition}
\label{alpha-symmetric}
For every fixed $\alpha = (\alpha_1, \alpha_2, \cdots, \alpha_N)$
we say that the function $\Psi = (\psi_1, \psi_2, \cdots, \psi_N) \in H^1_{\Gamma}$ is
$\alpha$-symmetric if it satisfies for all $x\in \mathbb{R}^+$:
\begin{equation}
\label{symmetry}
\alpha_2 \psi_2(x) = \cdots = \alpha_N \psi_N(x).
\end{equation}
\end{definition}

The symmetry (\ref{symmetry}) in Definition \ref{alpha-symmetric} provides the following reduction
of the NLS equation (\ref{eq1}) on the balanced star graph $\Gamma$ under the constraint (\ref{constraint}).

\begin{lemma}
Assume that $\Psi \in C(\mathbb{R},H^2_{\Gamma}) \cap C^1(\mathbb{R},L^2(\Gamma))$ is a strong
solution to the NLS equation (\ref{eq1}) under the constraint (\ref{constraint}) satisfying the
symmetry reduction (\ref{symmetry}) for every $t \in \mathbb{R}$. The wave function
\begin{equation}
\label{varphi}
\varphi(t,x) = \left\{ \begin{array}{ll} \alpha_1 \psi_1(t,x), \quad & x \leq 0, \\
\alpha_2 \psi_2(t,x), \quad & x \geq 0, \end{array} \right.
\end{equation}
is a strong solution $\varphi \in C(\mathbb{R},H^2(\mathbb{R})) \cap C^1(\mathbb{R},L^2(\mathbb{R}))$
to the following NLS equation on the real line $\mathbb{R}$:
\begin{equation} \label{nls}
 i \frac{\partial \varphi}{\partial t} = - \frac{\partial^2 \varphi}{\partial x^2} - 2 |\varphi|^{2} \varphi.
\end{equation}
\end{lemma}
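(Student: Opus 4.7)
The plan is to show that the pasted wave function $\varphi$ inherits $H^2(\mathbb{R})$-regularity with the required time regularity from the two edge components $\psi_1$ and $\psi_2$, and then to verify that $\varphi$ satisfies (\ref{nls}) on each half-line. Since the equation on $\mathbb{R}$ is equivalent to the pair of half-line equations together with $C^1$-matching across $x=0$, the only non-trivial issue is the matching of $\varphi$ and $\varphi'$ at the vertex; once that is established the PDE reduces to a one-line computation on each side.

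First I would unpack the $\alpha$-symmetry (\ref{symmetry}): for each $t$ the quantities $\alpha_j\psi_j(t,\cdot)$ coincide for $j\geq 2$, and together with the continuity condition in (\ref{H1}) this forces $\alpha_1\psi_1(t,0)=\alpha_2\psi_2(t,0)$, so $\varphi(t,\cdot)$ is continuous at $x=0$ and lies in $H^1(\mathbb{R})$. For the $H^2$-matching I would check $\varphi'(t,0^-)=\varphi'(t,0^+)$, i.e.\ $\alpha_1\psi_1'(t,0^-)=\alpha_2\psi_2'(t,0^+)$. Using (\ref{symmetry}) to write $\alpha_j^{-1}\psi_j'(t,0^+) = \alpha_j^{-2}\,\alpha_2\psi_2'(t,0^+)$ for $j\geq 2$, the generalized Kirchhoff condition in (\ref{H2}) becomes
$$
\alpha_1^{-1}\psi_1'(t,0^-) \;=\; \alpha_2\psi_2'(t,0^+)\sum_{j=2}^{N}\alpha_j^{-2},
$$
and the balance constraint (\ref{constraint}) collapses the right-hand side to $\alpha_1^{-2}\alpha_2\psi_2'(t,0^+)$, which gives exactly the desired $C^1$-matching. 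This is the step where both the $\alpha$-symmetry and the balance constraint are genuinely used, and it is the only real obstacle in the proof.

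Next, on $(-\infty,0)$ one has $\varphi=\alpha_1\psi_1$, and multiplying the first component of (\ref{eq1}) by $\alpha_1$ yields $i\partial_t\varphi = -\varphi'' - 2|\varphi|^2\varphi$, since $\alpha_1\cdot 2\alpha_1^2|\psi_1|^2\psi_1 = 2|\alpha_1\psi_1|^2(\alpha_1\psi_1)$. The identical computation with $j=2$ covers $(0,\infty)$, so (\ref{nls}) holds on each half-line; combined with the $H^2$-regularity across $x=0$, the equation holds in $L^2(\mathbb{R})$.

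Finally, the temporal regularity $\varphi\in C(\mathbb{R},H^2(\mathbb{R}))\cap C^1(\mathbb{R},L^2(\mathbb{R}))$ follows componentwise from the assumed regularity of $\Psi$, because the boundary matching at $x=0$ derived above is linear in $(\psi_1',\psi_2')$ and therefore persists under the $L^2$-continuity of $\partial_t\Psi$ and the $H^2_\Gamma$-continuity of $\Psi$. Everything beyond the derivative matching is a routine bookkeeping exercise in pulling $\alpha_1$ and $\alpha_2$ through the equation.
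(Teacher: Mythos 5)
Your proof is correct and takes essentially the same route as the paper's: reduce the equation piecewise on each half-line and verify $C^1$-matching of $\varphi$ at the vertex via the boundary conditions. You merely make explicit the derivative-matching computation (combining the symmetry (\ref{symmetry}), the Kirchhoff condition in (\ref{H2}), and the balance constraint (\ref{constraint})) that the paper states in a single sentence.
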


\begin{proof}
The NLS equation (\ref{nls}) is defined piecewise for $x < 0$ and $x > 0$
from the NLS equation (\ref{eq1}), the symmetry (\ref{symmetry}), and the representation (\ref{varphi}).
Thanks to the boundary conditions in (\ref{H1}) and (\ref{H2}), the function $\varphi(t,x)$ is continuously differentiable
across the vertex point $x = 0$. Hence if $\Psi \in C(\mathbb{R},H^2_{\Gamma}) \cap C^1(\mathbb{R},L^2(\Gamma))$
is a strong solution to (\ref{eq1}),
then $\varphi \in C(\mathbb{R},H^2(\mathbb{R})) \cap C^1(\mathbb{R},L^2(\mathbb{R}))$ is a strong solution to (\ref{nls}).
\end{proof}

The NLS equation (\ref{nls}) on the real line enjoys the translational symmetry in $x$.
The free parameter $a$ in the family of shifted state (\ref{soliton-shifted}) in Lemma \ref{solutions-II}
is related to the translational symmetry of the NLS equation (\ref{nls}) in $x$.
However, the translational symmetry is broken for the NLS equation (\ref{eq1}) on the star graph $\Gamma$
due to the vertex at $x = 0$. As a result, the momentum functional given by
\begin{equation}
\label{momentum}
P(\Psi) := {\rm Im} \langle \Psi',
\Psi \rangle_{L^2(\Gamma)} =
\int_{\mathbb{R}^-} {\rm Im} \left( \psi'_1 \overline{\psi}_1 \right) dx +
\sum_{j=2}^N \int_{\mathbb{R}^+} {\rm Im} \left( \psi'_j \overline{\psi}_j \right) dx
\end{equation}
is no longer constant under the time flow of (\ref{eq1}). It was
shown in \cite{KP2} (see Lemma 6.1) that for every weak solution $\Psi \in C(\mathbb{R},H^1_{\Gamma}) \cap C^1(\mathbb{R},H^{-1}(\Gamma))$
to the NLS equation (\ref{eq1}) the map $t \mapsto P(\Psi)$ is monotonically increasing,
thanks to the following inequality:
\begin{equation}
\label{dmom_dt}
\frac{d}{dt} P(\Psi)  =  \frac{1}{2} \sum_{j=2}^N \sum_{\substack{i=2 \\ i \neq j}}^N \frac{\alpha_1^2}{\alpha_j^2 \alpha_i^2}
\left| \alpha_j \psi_j'(0) - \alpha_i \psi_i'(0) \right|^2 \geq 0.
\end{equation}
If the weak solution $\Psi$ satisfies the symmetry (\ref{symmetry}) in Definition \ref{alpha-symmetric},
then $P(\Psi)$ is conserved in $t$.

It was proved in \cite{KP2} (see Theorem 4.1 and Corollary 4.3) that the one-parameter family of
shifted states in Lemma \ref{solutions-II} is spectrally unstable for $a < 0$ and spectrally stable
for $a > 0$. The degenerate state at $a = 0$ called {\em the half-soliton state} is expected to be nonlinearly unstable
as was shown in \cite{KP1} for uniform star graphs with $\alpha = 1$. In regards to the shifted states with $a > 0$,
it was conjectured in \cite{KP2} (see Conjectures 7.1 and 7.2) that the shifted state in Lemma \ref{solutions-II}
with $a > 0$ is nonlinearly unstable under the time flow because the shifted states drifts
along the only incoming edge towards the half-soliton state with $a = 0$, where
it is affected by spectral instability of the shifted states with $a < 0$.

The present work is devoted to the proof of the aforementioned conjectures. Our first main result shows that
the monotone increase of the map $t \mapsto P(\Psi)$ as in (\ref{dmom_dt})
leads to a drift along the family of shifted states (\ref{soliton-shifted})
in which the parameter $a$ decreases monotonically in $t$ towards $a = 0$.
This drift induces nonlinear instability of the spectrally stable shifted states
in Lemma \ref{solutions-II} with $a > 0$. The following theorem formulates the result.

\begin{theorem}
\label{main_theorem_1}
Fix $a_0>0$. For every $\nu \in (0,a_0)$ there exists $\epsilon_0 > 0$ (sufficiently small) such that
for every $\epsilon \in (0,\epsilon_0)$, there exists $\delta>0$ and $T > 0$ such that
for every initial datum $\Psi_0 \in H^1_\Gamma$ with $P(\Psi_0) > 0$ and
\begin{equation}
\label{theorem_bound_delta}
\inf_{\theta \in \mathbb{R}} \|\Psi_0 - e^{i\theta} \Phi(\cdot;a_0)\|_{H^1(\Gamma)} \leq \delta
\end{equation}
the unique solution $\Psi \in C([0,T], H^1_\Gamma) \cap C^1([0,T], H^{-1}_\Gamma)$
to the NLS equation (\ref{eq1}) with the initial datum $\Psi(0,\cdot) = \Psi_0$ satisfies the bound
\begin{equation}
\label{theorem_bound_epsilon}
\inf_{\theta \in \mathbb{R}} \|\Psi(t,\cdot) - e^{i\theta} \Phi(\cdot;a(t))\|_{H^1(\Gamma)} \leq \epsilon,
\quad t \in [0,T],
\end{equation}
where $a \in C^1([0,T])$ is a strictly decreasing function such that $\lim_{t \to T} a(t) = \nu$.
\end{theorem}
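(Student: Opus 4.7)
The plan is to combine a modulational decomposition along the shifted-state family $\{e^{i\theta}\Phi(\cdot;a)\}$ with the virial-type identity
\begin{equation*}
\frac{d}{dt}\int_\Gamma x\,|\Psi|^2\,dx = 2\,P(\Psi),
\end{equation*}
which holds on $\Gamma$ by integration by parts on each edge (the vertex boundary contributions carry a factor of $x$ and vanish). Together with the momentum monotonicity (\ref{dmom_dt}) and the hypothesis $P(\Psi_0)>0$, this translates into a strictly negative drift rate for the position parameter $a(t)$.

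First I would set up the modulation decomposition
\begin{equation*}
\Psi(t,\cdot)=e^{i\theta(t)}\bigl(\Phi(\cdot;a(t))+U(t,\cdot)\bigr),\qquad U=u+iv,
\end{equation*}
with the \emph{non-standard} orthogonality conditions $\langle v(t),\Phi(\cdot;a(t))\rangle_{L^2(\Gamma)}=0$ and $\langle u(t),x\Phi(\cdot;a(t))\rangle_{L^2(\Gamma)}=0$. These are well-defined because $\Phi(\cdot;a)$ decays exponentially (so $x\Phi\in L^2(\Gamma)$), and the implicit function theorem produces a unique smooth decomposition thanks to the non-degeneracy $\langle\partial_a\Phi,x\Phi\rangle_{L^2(\Gamma)}=-M_0/2\ne 0$, where $M_0:=\|\Phi(\cdot;a)\|_{L^2(\Gamma)}^2=2\alpha_1^{-2}$ is constant in $a$ by (\ref{constraint}). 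The orthogonality subspace is codimension $2$ and transverse to the two-dimensional kernel $\operatorname{span}_\RR\{i\Phi,\partial_a\Phi\}$ of $\Lambda_\omega''(\Phi(\cdot;a))$, so the constrained-minimizer result of \cite{KP2} still gives a coercivity bound $\Lambda_\omega''(\Phi(\cdot;a))[U,U]\geq c_0\|U\|_{H^1(\Gamma)}^2$ uniform for $a\in[\nu,a_0+1]$. Combined with conservation of $\Lambda_\omega$, with the $a$-independence of $\Lambda_\omega(\Phi(\cdot;a))$ (again thanks to (\ref{constraint})), and with (\ref{theorem_bound_delta}), a standard bootstrap yields $\|U(t)\|_{H^1(\Gamma)}\leq C\delta$ for as long as $a(t)\in[\nu,a_0+1]$.

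The drift equation is obtained by differentiating $\langle u,x\Phi(\cdot;a)\rangle_{L^2(\Gamma)}=0$ in $t$, substituting the linearization $\partial_t u=L_-v-\dot a\,\partial_a\Phi+(\dot\theta-1)v+O(\|U\|^2)$ obtained from (\ref{eq1}), and using the algebraic identity
\begin{equation*}
L_-(x\Phi)=x\,L_-\Phi-2\,\partial_a\Phi=-2\,\partial_a\Phi,
\end{equation*}
which is a direct componentwise computation from the stationary equation (\ref{eq2}). Self-adjointness of $L_-$ on $H^2_\Gamma$ together with the momentum expansion $P(\Psi)=-2\langle v,\partial_a\Phi\rangle+O(\|U\|^2)$---whose vertex boundary terms cancel thanks to (\ref{constraint})---then give
\begin{equation*}
\dot a(t)=-\frac{2\,P(\Psi(t))}{M_0}+\mathcal R(t),\qquad|\mathcal R(t)|=O(\delta).
\end{equation*}
By (\ref{dmom_dt}) and the hypothesis $P(\Psi_0)>0$, and by choosing $\delta$ small enough relative to $P(\Psi_0)$, we obtain $\dot a(t)\leq -P(\Psi_0)/M_0<0$. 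Hence $a$ is strictly decreasing, $T:=\inf\{t\geq 0:a(t)=\nu\}$ is finite, and (\ref{theorem_bound_epsilon}) follows from $\|U(t)\|_{H^1(\Gamma)}\leq C\delta\leq\epsilon$.

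The delicate point is the choice of orthogonality. The classical condition $\langle u,\partial_a\Phi\rangle=0$ leads to a modulation equation whose leading term is an integral of $v$ weighted by $4\alpha^2\Phi^2\partial_a\Phi$, not by $\partial_a\Phi$, and is therefore \emph{not} directly proportional to $P(\Psi)$. The replacement $\partial_a\Phi\rightsquigarrow x\Phi$ in the orthogonality encodes the virial identity at the linear level, and makes the identity $L_-(x\Phi)=-2\partial_a\Phi$ turn the drift computation into a clean proportionality. Verifying that coercivity of $\Lambda_\omega''$ persists with this modified orthogonality (via the $2\times 2$ transversality calculation, which relies on the exponential decay of $\Phi$ and on (\ref{constraint})) and controlling the remainder $\mathcal R$ relative to the potentially small quantity $P(\Psi_0)$ are the main technical burdens of the proof.
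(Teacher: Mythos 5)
Your overall strategy (modulation along the shifted-state family, the leading-order identity $\dot a \approx -\alpha_1^2 P(\Psi)$ obtained through the pairing $\langle v,\Phi'\rangle$ and $L_-\bigl((\cdot+a)\Phi\bigr)=-2\Phi'$, momentum monotonicity from (\ref{dmom_dt}), and a coercivity-based energy bootstrap) is the same as the paper's, but your implementation has a genuine gap at the coercivity step. You modulate only $(\theta,a)$ and impose the two conditions $\langle v,\Phi\rangle=0$ and $\langle u,x\Phi\rangle=0$, then claim that transversality of this codimension-two subspace to the kernel ${\rm span}\{i\Phi,\partial_a\Phi\}$ yields $\Lambda''_\omega[U,U]\geq c_0\|U\|^2_{H^1}$. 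This is false: $L_+$ has a simple negative eigenvalue $\lambda_0=-3$ (Morse index one), and neither of your conditions constrains the corresponding direction. Concretely, taking $u=F_0+s\,\Phi'$ with $F_0$ the negative eigenvector and $s$ chosen so that $\langle u,x\Phi\rangle=0$ gives $\langle L_+u,u\rangle=-3\|F_0\|^2<0$ on your constraint set. The shifted state is only a \emph{constrained} minimizer (fixed mass), so removing the negative direction requires either the additional orthogonality $\langle u,\Phi\rangle=0$ together with the slope condition $D_1\neq 0$ (this is why the paper modulates $\omega$ as a third parameter and imposes three orthogonality conditions, Lemma \ref{lem-coercivity} and Lemma \ref{unique_decomposition}), or an explicit use of mass conservation to show $\langle u,\Phi\rangle$ is of higher order and restore coercivity up to controllable errors. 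As written, your energy bootstrap $\|U(t)\|_{H^1}\leq C\delta$ does not close.

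A secondary but real problem is the error bookkeeping in the drift equation. You bound the remainder by $|\mathcal R(t)|=O(\delta)$ and propose to "choose $\delta$ small relative to $P(\Psi_0)$"; but $P(\Psi_0)$ is itself at most $C\delta$ for data satisfying (\ref{theorem_bound_delta}), so this choice is circular. One needs the remainder to be quadratic in the perturbation, $\mathcal R=O(\|U+iW\|^2_{H^1})$, as in (\ref{change_of_a}), and then the sign of $\dot a$ is obtained by comparing $P(\Psi)\geq P(\Psi_0)\sim\delta$ against $O(\epsilon^2)$ under a compatibility condition between $\delta$ and $\epsilon$ (Corollary \ref{corollary-a} and the choice $\delta=K_\nu^{-1/2}\epsilon$ in the paper). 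The correct scaling $T=\mathcal{O}(\epsilon^{-2})$ and the dependence of the coercivity constant on the lower bound $a\geq\nu$ (Remark \ref{rem-coercivity}, Remark \ref{rem-K}) also need to enter the choice of $\epsilon_0$, which your sketch omits.
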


By Theorem \ref{main_theorem_1}, the shifted state (\ref{soliton-shifted})
with $a > 0$ drifts towards the half-soliton state with $a = 0$. The half-soliton state is more degenerate than
the shifted state with $a > 0$ because the zero eigenvalue of the Jacobian operator associated with
the stationary NLS equation (\ref{eq2}) is simple for $a > 0$ and has multiplicity $N-1$ for $a = 0$.
Moreover, while the shifted state $\Phi(\cdot;a)$ with $a > 0$ is a degenerate minimizer of the action functional
$\Lambda_{\omega = 1}(\Psi) = E(\Psi) + Q(\Psi)$, the half-soliton state $\Phi \equiv  \Phi(\cdot;a=0)$ is a degenerate
saddle point of the same action functional \cite{KP1}. The following theorem shows the nonlinear instability of
the half-soliton state related to the saddle point geometry of the critical point.

\begin{theorem}
\label{instability_result}
Denote $\Phi \equiv \Phi(\cdot;a=0)$. There exists $\epsilon > 0$
such that for every sufficiently small $\delta > 0$
there exists $V \in H^1_{\Gamma}$ with $\| V \|_{H^1_{\Gamma}} \leq \delta$
such that the unique solution $\Psi \in C(\mathbb{R},H^1_{\Gamma}) \cap C^1(\mathbb{R},H^{-1}_{\Gamma})$
to the NLS equation (\ref{eq1}) with
the initial datum $\Psi(0,\cdot) = \Phi + V$ satisfies
\begin{equation}
\label{orbital-instab}
\inf_{\theta \in \mathbb{R}} \| e^{-i \theta} \Psi(T,\cdot) - \Phi \|_{H^1(\Gamma)} > \epsilon \quad \mbox{\rm for some \;} T > 0.
\end{equation}
Consequently, the orbit $\{ \Phi e^{i \theta}\}_{\theta \in \mathbb{R}}$
is unstable under the time flow of the NLS equation (\ref{eq1}).
\end{theorem}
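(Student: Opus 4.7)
The plan is to exploit the Galilei symmetry of the reduced line NLS (\ref{nls}), which the balanced condition (\ref{constraint}) makes available on the graph via an $\alpha$-symmetric boost of the half-soliton. The key observation is that multiplying $\Phi$ by a plane wave $e^{icx}$ preserves the $\alpha$-symmetry of Definition \ref{alpha-symmetric} and, thanks to (\ref{constraint}), respects both boundary conditions (\ref{H1}) and (\ref{H2}); the resulting boosted state therefore lies in $H^2_\Gamma$ and its NLS evolution is governed by a translating soliton.

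First I would set $V_c := (e^{icx}-1)\Phi$, verify that $\Phi + V_c = e^{icx}\Phi$ is $\alpha$-symmetric and in $H^2_\Gamma$, and estimate $\|V_c\|_{H^1(\Gamma)} \leq C c$ for $c \in (0,1]$ using the exponential decay of $\sech$. Given $\delta>0$ small, I would pick $c = c(\delta) \in (0,1]$ so that $\|V_c\|_{H^1(\Gamma)} \leq \delta$. By the $\alpha$-symmetric reduction lemma preceding Theorem \ref{main_theorem_1}, extended to the $H^1$-level by a density argument, the solution $\Psi(t,\cdot)$ of (\ref{eq1}) with initial datum $\Phi + V_c$ is the lift via formula (\ref{varphi}) of the explicit line solution
\[
\varphi(t,x) = e^{i(cx + (1-c^2)t)} \sech(x - 2ct),
\]
which one verifies directly is the Galilei-boosted soliton of (\ref{nls}).

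Next I would evaluate the distance from the orbit $\{e^{i\theta}\Phi\}_{\theta\in\mathbb{R}}$. A direct computation using (\ref{constraint}) gives, for every $\theta \in \mathbb{R}$,
\[
\|\Psi(t,\cdot) - e^{i\theta}\Phi\|_{H^1(\Gamma)}^2 = \alpha_1^{-2}\, \|\varphi(t,\cdot) - e^{i\theta}\sech\|_{H^1(\mathbb{R})}^2,
\]
and expanding the right-hand side reduces the minimization over $\theta$ to estimating $|\langle \sech(\cdot),\varphi(t,\cdot)\rangle_{H^1(\mathbb{R})}|$. Since the supports of $\sech(\cdot)$ and $\sech(\cdot - 2ct)$ become essentially disjoint for $2ct \geq 2$, this inner product is exponentially small in $ct$. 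Choosing $T := c^{-1}$ then yields
\[
\inf_{\theta \in \mathbb{R}} \|\Psi(T,\cdot) - e^{i\theta}\Phi\|_{H^1(\Gamma)} \geq \epsilon_0,
\]
for some absolute constant $\epsilon_0 > 0$ independent of $\delta$. Setting $\epsilon := \epsilon_0/2$ completes the argument.

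The hard part, though technically elementary, is verifying that the $x$-dependent phase $e^{icx}$ carried by $\varphi(T,\cdot)$ cannot be absorbed by the constant global rotation $e^{i\theta}$ — this is precisely where the near-orthogonality of $\sech(\cdot)$ and $\sech(\cdot - 2ct)$ in $H^1(\mathbb{R})$ is decisive. In contrast to Theorem \ref{main_theorem_1}, this proof requires neither modulation theory nor spectral analysis of the linearization around $\Phi$; the instability is witnessed by a single concrete admissible boosted half-soliton, whose existence hinges crucially on the balanced condition (\ref{constraint}).
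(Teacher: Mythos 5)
Your construction is correct, and it proves the theorem as literally stated by a genuinely different and much more elementary route than the paper. The paper's proof is a saddle-point argument: a primary modulation decomposition in $(\theta,\omega)$, a secondary decomposition onto the $(N-1)$-dimensional kernel of $L_+(\omega)$ and the generalized kernel of $L_-(\omega)$, a reduced cubic Hamiltonian system whose zero equilibrium is a cusp (Lemma \ref{Ham_instability}), and energy estimates plus a closeness lemma to transfer that instability to the full flow; the growing coordinates there are the \emph{asymmetric} ones, so the solution leaves a neighborhood of the entire shifted-state family, on the time scale $\mathcal{O}(\epsilon^{-1/2})$. You instead stay inside the $\alpha$-symmetric subspace: the balanced condition (\ref{constraint}) makes $e^{icx}\Phi$ an admissible element of $H^2_\Gamma$ (continuity is immediate and the Kirchhoff condition follows since $\phi'(0)=0$ and $\alpha_1^{-2} = \sum_{j\ge 2}\alpha_j^{-2}$), the lifted Galilei-boosted soliton is an exact solution, and the norm identity $\|\Psi(t,\cdot)-e^{i\theta}\Phi\|_{H^1(\Gamma)}^2=\alpha_1^{-2}\|\varphi(t,\cdot)-e^{i\theta}\sech\|_{H^1(\mathbb{R})}^2$ together with the exponentially small overlap of $\sech(\cdot)$ and $\sech(\cdot-2cT)$ gives (\ref{orbital-instab}) with an absolute $\epsilon$ after time $T\sim c^{-1}$; the quantifier order ($\epsilon$ fixed first, then $\delta$, then $c\lesssim\delta$) is respected. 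Two remarks. First, the one step to tighten: the reduction lemma in the paper only goes from graph to line, and no density argument is needed — simply verify that the lift of $\varphi(t,x)=e^{i(cx+(1-c^2)t)}\sech(x-2ct)$ via (\ref{varphi}) satisfies (\ref{H1})--(\ref{H2}) for every $t$ (again by (\ref{constraint}) and continuity of $\varphi'(t,\cdot)$ at $x=0$) and hence is a strong solution of (\ref{eq1}), then invoke uniqueness of the Cauchy problem to identify it with $\Psi$. Second, be aware of what your argument does and does not show: your boosted state remains $\mathcal{O}(\delta)$-close, modulo phase, to the shifted-state family $\{e^{i\theta}\Phi(\cdot;a)\}_{\theta,a}$ for all time, so it witnesses instability only because the orbit in (\ref{orbital-instab}) is defined by phase rotation alone — it is the ballistic drift along the broken translational degeneracy, i.e.\ the analogue of the $\gamma_1$-direction where the paper's reduced Hamiltonian has no restoring force. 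It therefore does not capture the symmetry-breaking growth and departure from the whole family that the paper's proof establishes and that underlies the drift-and-destruction scenario linking Theorems \ref{main_theorem_1} and \ref{instability_result}; as a proof of the stated inequality, however, it is complete and considerably shorter.
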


\begin{remark}
The result of Theorem \ref{instability_result} is very similar to the instability result in Theorem 2.7 in \cite{KP1}
which was proven for the uniform star graph with $\alpha = 1$.
\end{remark}

Finally, the shifted state $\Phi(\cdot;a)$ with $a < 0$ is a saddle point of the action functional
$\Lambda_{\omega = 1}(\Psi) = E(\Psi) + Q(\Psi)$. The saddle point is known to be spectrally unstable \cite{KP2}.
Consequently, it is also nonlinearly unstable under the time flow of the NLS equation (\ref{eq1})
with fast growing perturbations which break the symmetry (\ref{symmetry}) of the shifted state.

Our numerical results collected together in Section \ref{sec-numerics} illustrate all three stages of the nonlinear
instability of the shifted state with $a > 0$ in the balanced star graph $\Gamma$ with $N = 3$. We show the drift instability for
the shifted states with $a > 0$, the weak instability of the half-soliton state with $a = 0$, and the fast exponential instability
of the shifted states with $a < 0$. We also illustrate numerically that the monotonic increase of the momentum functional
in (\ref{dmom_dt}) can lead to the drift instability even if the assumption $P(\Psi_0) > 0$ of Theorem \ref{main_theorem_1}
on the initial datum $\Psi_0$ is not satisfied.

\section{Linear estimates}
\label{sec-linear}

Recall that the scaling transformation (\ref{scal-transf}) transforms the normalized shifted states $\Phi$ of Lemma \ref{solutions-II}
to the $\omega$-dependent family $\Phi_{\omega}$ of the shifted states. We note the following elementary computations:
\begin{eqnarray}
\label{nonzero-entries-1}
D_1(\omega) = -\langle \Phi_{\omega}(\cdot;a), \partial_{\omega} \Phi_{\omega}(\cdot;a) \rangle_{L^2(\Gamma)} =
-\frac{1}{2} \frac{d}{d\omega} \|\Phi_{\omega} \|_{L^2(\Gamma)}^2 = -\frac{1}{2 \alpha_1^2 \omega^{\frac{1}{2}}}
\end{eqnarray}
and
\begin{eqnarray}
\label{nonzero-entries-2}
D_2(\omega) = -\langle \Phi_{\omega}'(\cdot;a), (\cdot + a) \Phi_{\omega}(\cdot;a) \rangle_{L^2(\Gamma)} =
\frac{1}{2} \|\Phi_{\omega} \|_{L^2(\Gamma)}^2 = \frac{\omega^{\frac{1}{2}}}{\alpha_1^2}.
\end{eqnarray}
We discuss separately the linearization of the shifted state with $a \neq 0$ and the half-soliton state with $a = 0$.

\subsection{Linearization at the shifted state with $a \neq 0$}

For every standing wave solution $\Phi_{\omega}(\cdot;a)$ we define two self-adjoint linear operators
$L_{\pm}(\omega,a) : H^2_{\Gamma} \subset L^2(\Gamma) \to L^2(\Gamma)$
by the differential expressions:
\begin{eqnarray}
\left\{ \begin{array}{l}
L_-(\omega,a) = -\Delta + \omega - 2 \alpha^2 \Phi_{\omega}(\cdot;a)^2, \\
L_+(\omega,a) = -\Delta + \omega - 6 \alpha^2 \Phi_{\omega}(\cdot;a)^2. \end{array} \right.
\label{L_a_omega}
\end{eqnarray}
The operator $L_-(\omega,a)$ acts on the imaginary part of the perturbation to $\Phi_{\omega}(\cdot;a)$ and the operator
$L_+(\omega,a)$ acts on the real part of the perturbation; the latter operator is also the Jacobian operator
for the stationary NLS equation (\ref{eq2}). The spectrum of the self-adjoint operators $L_{\pm}(\omega,a)$
was studied in \cite{KP2}, from which we recall some basic facts.

The continuous spectrum is strictly positive thanks to the fast exponential decay of $\Phi_{\omega}(x;a)$ to zero as $|x| \to \infty$
and Weyl's Theorem:
\begin{equation}
\sigma_c(L_\pm(\omega,a)) = [\omega, \infty),
\end{equation}
where $\omega>0$. The discrete spectrum $\sigma_p(L_\pm(\omega,a)) \subset (-\infty,\omega)$
includes finitely many negative, zero, and positive eigenvalues of finite multiplicities.

\begin{figure}[htbp] 
   \centering
   \includegraphics[width=5in, height = 4in]{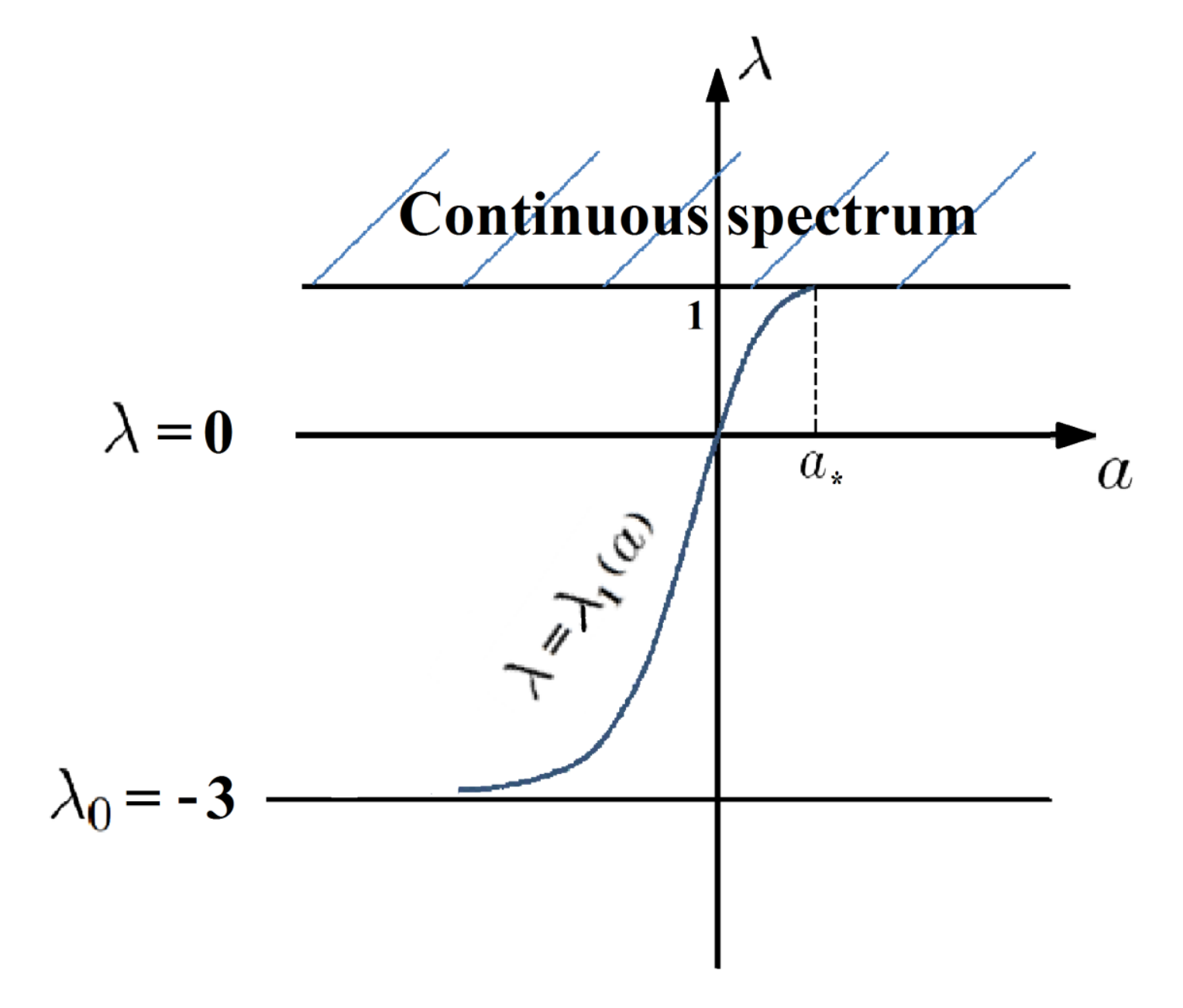}
   \caption{The spectrum of $L_+(\omega,a)$ for $\omega = 1$.
   The continuous spectrum is $[1,\infty)$, while the discrete spectrum is given by
   the eigenvalues $\lambda=0$, $\lambda=-3$, and $\lambda=\lambda_1(a)$ in (\ref{lambda_1_a}).}
   \label{fig-2}
\end{figure}

Eigenvalues of $\sigma_p(L_+(\omega,a)) \subset (-\infty,\omega)$
are known in the explicit form \cite{KP2}. For $\omega = 1$, these eigenvalues are given by:
\begin{itemize}
\item a simple negative eigenvalue $\lambda_0=-3$;
\item a zero eigenvalue $\lambda=0$ which is simple when $a \neq 0$;
\item the additional eigenvalue $\lambda = \lambda_1(a)$ of multiplicity $N-2$ given by
\begin{equation}
\label{lambda_1_a}
\lambda_1(a) = -\frac{3}{2}\tanh(a)\left[ \tanh(a) - \sqrt{1+3\sech(a)} \right].
\end{equation}
It is negative for $a < 0$, zero for $a = 0$, and positive for $a \in (0,a_*)$, where $a_* = \tanh ^{-1}\left(\frac{1}{\sqrt{3}}\right) \approx 0.66$.
The eigenvalue merges into the continuous spectrum as $a \to a_*$.
\end{itemize}
The spectrum of $L_+(\omega,a)$ for $\omega = 1$ is illustrated in Fig. \ref{fig-2}.

Eigenvalues of $\sigma_p(L_-(\omega,a)) \subset (-\infty,\omega)$ are non-negative
and the zero eigenvalue is simple.
If $a \neq 0$, the zero eigenvalues of  $L_+(\omega,a)$ and $L_-(\omega,a)$ are each simple with the eigenvectors given by
\begin{equation}
\label{kernel}
L_+(\omega,a) \Phi_{\omega}'(\cdot;a) = 0, \quad L_-(\omega,a) \Phi_{\omega}(\cdot;a) = 0.
\end{equation}
The eigenvectors in (\ref{kernel}) induce the generalized eigenvectors in
\begin{equation}
\label{kernel-generalized}
L_+(\omega,a) \partial_{\omega} \Phi_{\omega}(\cdot;a)  = - \Phi_{\omega}(\cdot;a), \quad
L_-(\omega,a) (\cdot + a) \Phi_{\omega}(\cdot;a) = -2\Phi_{\omega}'(\cdot;a).
\end{equation}
The following lemma gives coercivity of the quadratic forms associated with the operators
$L_+(\omega,a)$ and $L_-(\omega,a)$ for $a > 0$.

\begin{lemma}
\label{lem-coercivity}
For every $\omega > 0$ and $a > 0$, there exists a positive constant $C(\omega,a)$ such that
\begin{equation}
\label{joint_coercivity}
\langle L_+(\omega,a) U, U \rangle_{L^2(\Gamma)} + \langle L_-(\omega,a) W, W \rangle_{L^2(\Gamma)} \geq C(\omega,a) \|U+iW\|^2_{H^1(\Gamma)}
\end{equation}
if $U$ and $W$ satisfy the orthogonality conditions
\begin{equation}
\label{orthogonal_constraints-linear}
\left\{ \begin{array}{l}
\langle W, \partial_{\omega} \Phi_{\omega}(\cdot;a) \rangle_{L^2(\Gamma)} = 0, \\
\langle U,\Phi_{\omega}(\cdot;a) \rangle_{L^2(\Gamma)} = 0, \\
\langle U, (\cdot + a)\Phi_{\omega}(\cdot;a) \rangle_{L^2(\Gamma)} = 0,  \end{array} \right.
\end{equation}
\end{lemma}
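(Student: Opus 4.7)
The argument splits along real and imaginary parts: since $L_+(\omega,a)$ and $L_-(\omega,a)$ are self-adjoint and the three constraints decouple — one on $W$, two on $U$ — my plan is to establish $L^2(\Gamma)$-coercivity of each quadratic form separately and then upgrade to $H^1$ by a standard G\aa rding-type argument. The upgrade is routine: since $L_\pm = -\Delta + \omega - V_\pm$ with $V_\pm$ a bounded multiplication operator, any $L^2$ lower bound $\langle L_\pm U, U\rangle \geq c\,\|U\|_{L^2(\Gamma)}^2$ yields $\|U'\|_{L^2(\Gamma)}^2 \leq \langle L_\pm U, U\rangle + C\|U\|_{L^2(\Gamma)}^2 \leq C'\langle L_\pm U, U\rangle$, which assembles into (\ref{joint_coercivity}).

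For $L_-(\omega,a)$, whose spectrum is nonnegative with a simple zero eigenvalue spanned by $\Phi_\omega(\cdot;a)$ and a positive spectral gap to the rest of the spectrum, I decompose $W = c\,\Phi_\omega(\cdot;a) + W^\perp$ with $\langle W^\perp, \Phi_\omega(\cdot;a)\rangle_{L^2(\Gamma)} = 0$. The spectral gap yields $\langle L_- W, W\rangle = \langle L_- W^\perp, W^\perp\rangle \geq \delta_-\,\|W^\perp\|_{L^2(\Gamma)}^2$. The orthogonality $\langle W, \partial_\omega \Phi_\omega\rangle_{L^2(\Gamma)} = 0$ then forces $c\,\langle \Phi_\omega, \partial_\omega \Phi_\omega\rangle = -\langle W^\perp, \partial_\omega \Phi_\omega\rangle$, and the Vakhitov--Kolokolov-type slope $\langle \Phi_\omega, \partial_\omega \Phi_\omega\rangle_{L^2(\Gamma)} = -D_1(\omega) = \frac{1}{2\alpha_1^2\omega^{1/2}} > 0$ from (\ref{nonzero-entries-1}) bounds $|c| \leq C\|W^\perp\|_{L^2(\Gamma)}$. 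Hence $\|W\|_{L^2(\Gamma)}^2 \leq C'\|W^\perp\|_{L^2(\Gamma)}^2$, giving the claimed $L^2$ lower bound for $L_-$.

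The $L_+(\omega,a)$ case is the main obstacle: $L_+$ has a simple negative eigenvalue $-3\omega$, and the constraint vector $(\cdot+a)\Phi_\omega$ has nonzero pairing with $\Phi_\omega'(\cdot;a) \in \ker L_+$, so the second constraint is not of pure range-projection type. A short integration by parts using the balance identity (\ref{constraint}) gives $\langle \Phi_\omega', \Phi_\omega\rangle_{L^2(\Gamma)} = 0$ (the boundary terms at $x=0$ cancel), while (\ref{nonzero-entries-2}) shows $\langle \Phi_\omega', (\cdot+a)\Phi_\omega\rangle_{L^2(\Gamma)} = -\tfrac{1}{2}\|\Phi_\omega\|_{L^2(\Gamma)}^2 \neq 0$. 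To untangle the constraints, I split $U = U^\perp + d\,\Phi_\omega'(\cdot;a)$ with $\langle U^\perp, \Phi_\omega'\rangle_{L^2(\Gamma)} = 0$; because $\Phi_\omega' \in \ker L_+$, we have $\langle L_+ U, U\rangle = \langle L_+ U^\perp, U^\perp\rangle$, the first orthogonality condition becomes the clean constraint $\langle U^\perp, \Phi_\omega\rangle = 0$ (using $\Phi_\omega \perp \Phi_\omega'$), and the second simply fixes $d = 2\langle U^\perp, (\cdot+a)\Phi_\omega\rangle_{L^2(\Gamma)}/\|\Phi_\omega\|_{L^2(\Gamma)}^2$.

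It remains to show strict positivity of $L_+$ on $\{U^\perp : U^\perp \perp \Phi_\omega,\ U^\perp \perp \Phi_\omega'\}$, for which I will invoke a spectral-counting lemma of Grillakis--Shatah--Strauss type. The restriction of $L_+$ to $\ker(L_+)^\perp$ has exactly one negative eigenvalue and is invertible on its range; the single range-type constraint by $\Phi_\omega$ produces the scalar $M = \langle L_+^{-1}\Phi_\omega, \Phi_\omega\rangle_{L^2(\Gamma)} = -\langle \partial_\omega\Phi_\omega, \Phi_\omega\rangle = D_1(\omega) < 0$, via the generalized-kernel relation $L_+\partial_\omega\Phi_\omega = -\Phi_\omega$ from (\ref{kernel-generalized}) together with the sign in (\ref{nonzero-entries-1}). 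Since $n_-(M) = 1$ cancels the single negative eigenvalue, the restricted form is positive semidefinite; triviality of its kernel follows because any putative zero eigenvector $\eta$ satisfies $L_+\eta = \lambda\,\Phi_\omega$ for some $\lambda\in\mathbb{R}$, so $\eta + \lambda\,\partial_\omega\Phi_\omega \in \ker L_+$, and the constraint $\langle \eta, \Phi_\omega\rangle = 0$ combined with $\langle \partial_\omega\Phi_\omega, \Phi_\omega\rangle \neq 0$ forces $\lambda = 0$, whence $\eta \in \ker L_+ \cap \{\Phi_\omega'\}^\perp = \{0\}$. This yields $\langle L_+ U^\perp, U^\perp\rangle \geq c\,\|U^\perp\|_{L^2(\Gamma)}^2$, and the formula for $d$ gives $\|U\|_{L^2(\Gamma)}^2 \leq C\|U^\perp\|_{L^2(\Gamma)}^2$, completing the $L^2$ coercivity for $L_+$. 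The delicate point is precisely this untangling of the mixed constraint structure for $L_+$; once the balance-induced orthogonality $\langle \Phi_\omega', \Phi_\omega\rangle = 0$ and the Vakhitov--Kolokolov sign $D_1(\omega) < 0$ are in hand, the remainder is bookkeeping.
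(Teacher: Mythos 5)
Your proof is correct, and its overall architecture matches the paper's: the first orthogonality condition handles $L_-$, the second removes the negative mode of $L_+$, the third deals with the zero mode, and a G{\aa}rding argument upgrades the $L^2$ bounds to $H^1$. Where you differ is in the mechanism for the $L_+$ zero mode and in self-containedness. The paper does not decompose $U$; it invokes constrained-spectrum lemmas from \cite{KP1} (Lemmas 3.8 and 5.6) to say that the constraint $\langle U,\Phi_\omega\rangle=0$ shifts the negative eigenvalue (via $D_1<0$) and the constraint $\langle U,(\cdot+a)\Phi_\omega\rangle=0$ shifts the zero eigenvalue (via $D_2\neq 0$), then applies G{\aa}rding. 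You instead split $U=U^\perp+d\,\Phi_\omega'$ along the kernel, use $\langle \Phi_\omega',(\cdot+a)\Phi_\omega\rangle=-\tfrac12\|\Phi_\omega\|^2\neq 0$ only to bound $|d|\lesssim\|U^\perp\|_{L^2}$, and prove positivity of $L_+$ on $\{\Phi_\omega,\Phi_\omega'\}^\perp$ by a Grillakis--Shatah--Strauss index count with $M=\langle L_+^{-1}\Phi_\omega,\Phi_\omega\rangle=D_1(\omega)<0$; this reconstructs the cited lemmas rather than quoting them, and both routes ultimately hinge on the same two nonzero pairings $D_1$ and $D_2$. Three small points to tighten: the Euler--Lagrange equation for a putative constrained zero mode should carry two multipliers, $L_+\eta=\lambda\Phi_\omega+\mu\Phi_\omega'$, with $\mu=0$ following by pairing with $\Phi_\omega'\in\ker L_+$; the step from ``nonnegative with trivial constrained kernel'' to a quantitative bound $\geq c\|U^\perp\|_{L^2}^2$ should note that $\sigma_{\rm ess}(L_+)=[\omega,\infty)$ with only finitely many eigenvalues below it, so a vanishing constrained infimum would be attained and contradict kernel triviality; and the index count $n_-(L_+)=1$ is exactly where $a>0$ enters (for $a<0$ the eigenvalue $\lambda_1(a)$ is negative and the argument fails, consistent with Remark \ref{rem-negative-eig}), which is worth stating explicitly. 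Your route also silently yields the degeneration $C(\omega,a)\to 0$ as $a\to 0$ through the constrained bottom eigenvalue, in agreement with Remark \ref{rem-coercivity}, though the lemma as stated does not require you to track this.
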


\begin{proof}
The first orthogonality condition in (\ref{orthogonal_constraints-linear}) shifts the lowest (zero) eigenvalue of $L_-(\omega,a)$ to a positive eigenvalue
thanks to the condition (\ref{nonzero-entries-1}) (see Lemma 5.6 in \cite{KP1}) and yields by G{\aa}rding's inequality
the following coercivity bound
$$
\langle L_-(\omega,a) W, W \rangle_{L^2(\Gamma)} \geq C(\omega) \|W\|^2_{H^1(\Gamma)}
$$
independently of $a$. The second orthogonality condition in (\ref{orthogonal_constraints-linear}) shifts the lowest (negative)
eigenvalue of $L_-(\omega,a)$ to a positive eigenvalue thanks to the same condition
(\ref{nonzero-entries-1}) (see Lemma 3.8 in \cite{KP1}) and yields
$$
\langle L_+(\omega,a) U, U \rangle_{L^2(\Gamma)} \geq 0
$$
with $\langle L_+(\omega,a) U, U \rangle_{L^2(\Gamma)} = 0$ if and only if $U$ is  proportional
to $\Phi_{\omega}'(\cdot;a).$ The zero eigenvalue of $L_+(\omega,a)$ is preserved by the constraint since
$$
\langle \Phi_{\omega}(\cdot;a), \Phi_{\omega}'(\cdot;a) \rangle_{L^2(\Gamma)}  = 0.
$$
Finally, the third orthogonality condition in (\ref{orthogonal_constraints-linear}) shifts the zero eigenvalue of $L_+(\omega,a)$ to a
positive eigenvalue thanks to the condition (\ref{nonzero-entries-2}). By G{\aa}rding's inequality,
this yields the coercivity bound
$$
\langle L_+(\omega,a) U, U \rangle_{L^2(\Gamma)} \geq C(\omega,a) \|U\|^2_{H^1(\Gamma)},
$$
where $C(\omega,a)$ depends on $a$ because the gap between the zero eigenvalue and the rest of the
positive spectrum in $\sigma_p(L_+(\omega,a))$ exists for $a > 0$ but vanishes as $a \to 0$.
\end{proof}

\begin{remark}
\label{rem-coercivity}
For every $\omega > 0$, the positive constant $C(\omega,a)$ in (\ref{joint_coercivity}) satisfies
$$
C(\omega,a) \to 0 \quad \mbox{\rm as} \quad a \to 0.
$$
This is because the zero eigenvalue in $\sigma_p(L_+(\omega,a = 0))$ has multiplicity $(N-1)$ and
the $(N-2)$ eigenvectors of $L_+(\omega,a=0)$ satisfy the last two
orthogonality conditions (\ref{orthogonal_constraints-linear}) as is seen from the proof
of Lemma \ref{orth_conditions}.
\end{remark}

\begin{remark}
\label{rem-negative-eig}
For $a < 0$, the result of Lemma \ref{lem-coercivity} is invalid because $\sigma_p(L_+(\omega,a))$
includes another negative eigenvalue as is seen from Fig. \ref{fig-2}.
\end{remark}

\begin{remark}
The orthogonality conditions in (\ref{orthogonal_constraints-linear}) are typically referred to as
the symplectic orthogonality conditions, because they express orthogonality of residual terms
$U$ and $W$ for real and imaginary parts of the perturbation to $\Phi_{\omega}(\cdot;a)$
to the eigenvectors and generalized eigenvectors of the spectral stability problem
expressed by $L_+(\omega,a)$ and $L_-(\omega,a)$ and the symplectic structure of the NLS equation.
Note that our approach will not rely on adding one more orthogonality constraint
$\langle W, \Phi_{\omega}'(\cdot;a) \rangle_{L^2(\Gamma)} = 0$. As a result, we will only use
three parameters for modulations of the stationary state orbit
$\{e^{i\theta} \Phi_\omega (\cdot;a)\}_{\theta \in \mathbb{R}, a \in \mathbb{R}^+, \omega \in \mathbb{R}^+}$.
\end{remark}

\subsection{Linearization at the half-soliton state}

For $a = 0$, we denote operators $L_{\pm}(\omega) \equiv L_{\pm}(\omega,a=0)$.
The kernel of the operator $L_+(\omega)$ is spanned by an
orthogonal basis consisting of $N-1$ eigenvectors, which we denote by
$\{U^{(1)}_\omega, U^{(2)}_\omega, \cdots, U^{(N-1)}_\omega\}$.
The following lemma specifies properties of these basis eigenvectors.

\begin{lemma}
\label{kernel_elements}
There exists an orthogonal basis $\{U^{(1)}_\omega, U^{(2)}_\omega, \cdots, U^{(N-1)}_\omega\}$ of the kernel of $L_+(\omega)$
satisfying the orthogonality condition
\begin{equation}
\label{L_c_2}
\langle U, \Phi_\omega \rangle_{L^2(\Gamma)} = 0.
\end{equation}
The eigenvectors can be represented in the following way: for $j = 1$,
\begin{equation}
\label{U_0}
U^{(1)}_\omega := (\alpha_1^{-1}\phi_\omega',\alpha_2^{-1}\phi_\omega', \dots, \alpha_N^{-1}\phi_\omega'),
\end{equation}
and for $j = 2,\dots,N-1$,
\begin{equation}
\label{U_j}
U^{(j)}_\omega : = (\underbrace{ 0, \dots, 0}_\text{\rm (j-1) elements},
r_j \phi_\omega', \alpha_{j+1}^{-1} \phi_\omega', \dots, \alpha_{N}^{-1} \phi_\omega'), \quad
r_j = -\left( \sum_{i=j+1}^N \frac{1}{\alpha_i^2} \right) \alpha_j,
\end{equation}
where $\phi_\omega(x) = \omega^{\frac{1}{2}} \sech (\omega^{\frac{1}{2}}x)$, $x \in \mathbb{R}$.
\end{lemma}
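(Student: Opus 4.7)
The plan is to first identify the scalar ODE kernel on each edge, then translate the vertex conditions into a single linear relation, and finally verify that the proposed vectors form an orthogonal basis of the resulting $(N-1)$-dimensional kernel. At $a=0$ each component of the shifted state is $\phi_j(x;0) = \alpha_j^{-1}\phi_\omega(x)$, so on every edge the nonlinear coefficient satisfies $\alpha_j^2\phi_j(x;0)^2 = \phi_\omega(x)^2$. Consequently, the componentwise action of $L_+(\omega) = L_+(\omega,0)$ reduces on each edge to the scalar Pöschl--Teller operator $-\partial_x^2 + \omega - 6\phi_\omega^2$ acting on the corresponding half-line. Since the second linearly independent solution to this ODE grows exponentially at infinity, the $L^2$-kernel on each half-line is one-dimensional and spanned by $\phi_\omega'$.

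Writing $U = (c_1\phi_\omega',c_2\phi_\omega',\dots,c_N\phi_\omega')$ with $c_j\in\mathbb{R}$, the continuity condition in (\ref{H1}) is automatically satisfied because $\phi_\omega$ is even and therefore $\phi_\omega'(0) = 0$. The Kirchhoff condition in (\ref{H2}) then reduces to the single linear relation
\begin{equation*}
\alpha_1^{-1} c_1 = \sum_{j=2}^{N} \alpha_j^{-1} c_j,
\end{equation*}
which cuts out an $(N-1)$-dimensional subspace of $\mathbb{R}^N$. This confirms both that $\dim\ker L_+(\omega) = N-1$ and identifies the kernel with the solutions of the above equation.

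Next, I would verify that the $N-1$ proposed vectors lie in the kernel. For $U^{(1)}_\omega$ the coefficients are $c_j = \alpha_j^{-1}$, and the Kirchhoff relation reduces to exactly the balanced constraint (\ref{constraint}); for $j\geq 2$ the first $j-1$ coefficients vanish and the condition collapses to $\alpha_j^{-1} r_j + \sum_{i=j+1}^N \alpha_i^{-2} = 0$, which is the defining equation for $r_j$ and requires no use of (\ref{constraint}). Linear independence is immediate from the staircase zero-pattern of the coefficient vectors (the vector $U^{(j)}_\omega$ has a nonzero $j$-th component while $U^{(k)}_\omega$ vanishes there for $k > j$), so the $N-1$ vectors indeed form a basis.

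Finally, I would verify the orthogonality relations. Using $\int_0^\infty \phi_\omega' \phi_\omega\,dx = -\tfrac{1}{2}\phi_\omega(0)^2$ and $\int_{-\infty}^0 \phi_\omega'\phi_\omega\,dx = \tfrac{1}{2}\phi_\omega(0)^2$, the inner products $\langle U^{(j)}_\omega, \Phi_\omega\rangle_{L^2(\Gamma)}$ reduce to a sum of the form $\alpha_j^{-1}r_j + \sum_{i=j+1}^N \alpha_i^{-2}$ (with the $j=1$ case instead producing the balanced constraint), which vanishes by the same identities used in the Kirchhoff check. Mutual orthogonality $\langle U^{(j)}_\omega, U^{(k)}_\omega\rangle_{L^2(\Gamma)} = 0$ for $j<k$ follows because the only contributions come from the edges $i \geq k$ and again assemble into the same telescoping combination of the $\alpha_i^{-2}$. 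There is no real obstacle here: the computation is routine once the essential observation is made that the zero $a=0$ of $\phi_\omega'(0)$ decouples continuity from Kirchhoff and reduces the problem to a single linear constraint, and that the balanced condition (\ref{constraint}) is exactly what is needed to accommodate the extra vector $U^{(1)}_\omega$.
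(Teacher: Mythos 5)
Your proposal is correct and follows essentially the same route as the paper: reduce $L_+(\omega)U=0$ to the scalar ODE on each edge, note that $\phi_\omega'(0)=0$ makes continuity automatic so the vertex data collapse to the single Kirchhoff relation (giving an $(N-1)$-dimensional kernel), and then check that the listed vectors satisfy that relation, are orthogonal to $\Phi_\omega$, and are mutually orthogonal via the identity $\alpha_j^{-1}r_j+\sum_{i=j+1}^N\alpha_i^{-2}=0$ together with the balanced constraint for $U^{(1)}_\omega$. The only cosmetic difference is that you verify the explicit basis directly, whereas the paper builds it recursively (from $j=N-1$ down to $j=2$) and notes that orthogonality to $\Phi_\omega$ is equivalent to the Kirchhoff constraint for any kernel element; the computations are the same.
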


\begin{proof}
Let $U = (u_1,u_2, \dots, u_N) \in H^2_\Gamma$ be an eigenvector for the zero eigenvalue
of the operator $L_+(\omega)$. Each component of the eigenvalue problem $L_+(\omega)U =0$
satisfies
\begin{equation}
\label{ode-u-j}
-u_j''(x)+\omega u_j(x) - 6\omega \sech^2(\sqrt{\omega} x)u_j(x) = 0,
\end{equation}
where $x \in \mathbb{R}^-$ on the first edge and $x \in \mathbb{R}^+$ on the remaining edges.
Since $H^2(\mathbb{R}^\pm)$ are continuously embedded into $C^1(\mathbb{R}^\pm)$,
if $U \in H^2(\Gamma)$, then both $u_j(x)$ and $u_j'(x)$ decay to zero as $|x| \to \infty$.
Such solutions to the differential equations (\ref{ode-u-j}) are given uniquely
by $u_j(x) = a_j\phi'_\omega(x)$ up to multiplication by a constant $a_j$.
Therefore, the eigenvector $U$ is given by
\begin{equation}
\label{Q_function}
U = (a_1 \phi_\omega', a_2 \phi_\omega', \dots, a_N \phi_\omega').
\end{equation}
The eigenvector $U \in H^2_\Gamma$ must satisfy the boundary conditions in (\ref{H2}).
The continuity conditions hold since $\phi_\omega'(0) = 0$, whereas the Kirchhoff condition implies
\begin{equation}
\label{c_constraint}
\frac{a_1}{\alpha_1} = \sum_{j=2}^N \frac{a_j}{\alpha_j}.
\end{equation}
Since the scalar equation (\ref{c_constraint}) relates $N$ unknowns, the space of solutions
for $(a_1, a_2, \dots, a_N)$ is $(N-1)$-dimensional and the kernel of the operator $L_+(\omega)$
is $(N-1)$-dimensional. Let $\{U^{(1)}_\omega, U^{(2)}_\omega, \dots, U^{(N-1)}_\omega\}$ be an orthogonal basis of the kernel,
which can be constructed from any set of basis vectors by applying the Gram-Schmidt orthogonalization process.

Direct computations show that if $U$ is given by (\ref{Q_function}), then
$$
\langle  U, \Phi_\omega \rangle_{L^2(\Gamma)} =
\left(\sum_{j=2}^N \frac{a_j}{\alpha_j} - \frac{a_1}{\alpha_1} \right)
\langle \phi_\omega', \phi_\omega \rangle_{L^2(\mathbb{R}^+)},
$$
which means that the condition (\ref{c_constraint}) is equivalent to
$\langle  U, \Phi_\omega \rangle_{L^2(\Gamma)} = 0$. Therefore, all elements
in the orthogonal basis satisfy the orthogonality condition (\ref{L_c_2}).

It remains to prove that the orthogonal basis can be characterized in the form given in (\ref{U_0})--(\ref{U_j}).
From the constraint (\ref{constraint}), we can take $a_j = \alpha_j^{-1}$ for all $j$ in (\ref{c_constraint})
to set the first eigenvector $U^{(1)}_\omega$ to be defined by (\ref{U_0}).
The last eigenvector $U^{(N-1)}_\omega$ can be defined by
\begin{equation}
\label{U_2}
U^{(N-1)}_\omega : = (0, \dots, 0, r_{N-1} \phi_\omega', \alpha_N^{-1}\phi_\omega'),
\end{equation}
where $r_{N-1}$ is defined to satisfy the orthogonality condition $\langle U^{(1)}_\omega, U^{(N-1)}_\omega \rangle_{L^2(\Gamma)} = 0$
and the condition (\ref{c_constraint}). In fact, both conditions are equivalent since the first $(N-2)$ entries of $U^{(N-1)}_\omega$ are zero and
$$
\langle U^{(1)}_\omega, U^{(N-1)}_\omega \rangle_{L^2(\Gamma)} =
\|\phi_\omega'\|^2_{L^2(\mathbb{R}^+)}
\left( \frac{r_{N-1}}{\alpha_{N-1}} + \frac{1}{\alpha^2_{N}} \right)
$$
with $\|\phi_\omega'\|^2_{L^2(\mathbb{R}^+)}\neq 0$. Hence $r_{N-1}$ is defined by
$$
r_{N-1} = -\frac{\alpha_{N-1}}{\alpha_N^2}.
$$
The remaining eigenvectors $U^{(j)}_\omega$ in (\ref{U_j}) are constructed recursively
from $j = N-2$ to $j = 2$. By direct computations we obtain that the orthogonality condition
$\langle U^{(1)}_\omega, U^{(j)}_\omega \rangle_{L^2(\Gamma)} = 0$ is equivalent to the constraint (\ref{c_constraint}).
Moreover, all the eigenvectors are mutually orthogonal thanks to the recursive construction of $U_\omega^{(j)}$,
\end{proof}

We denote the eigenspace for the kernel of $L_+(\omega)$ by
\begin{equation}
\label{X_c}
X_{\omega}:={\rm span}\{U^{(1)}_\omega, U^{(2)}_\omega, \cdots, U^{(N-1)}_\omega\}.
\end{equation}
For each $j = 1,2,\dots,N-1$, we construct the generalized eigenvector $W^{(j)}_\omega \in H^2_{\Gamma}$
by solving
$$
L_-(\omega) W^{(j)}_\omega = U^{(j)}_\omega,
$$
which exists thanks to the orthogonality condition (\ref{L_c_2})
since $\Phi_{\omega}$ spans the kernel of $L_-(\omega)$.
Explicitly, representing $U_\omega^{(j)}$ from (\ref{U_0})--(\ref{U_j}) by
\begin{equation}
\label{U-2}
U_\omega^{(j)} = \phi_\omega' e_j
\end{equation}
with some $x$-independent vectors $e_j \in \mathbb{R}^N$, we get for the same vectors $e_j$
\begin{equation}
\label{W_vector}
W_\omega^{(j)} = \chi_{\omega} e_j,
\end{equation}
where $\chi_{\omega}(x) = -\frac{1}{2}x\phi_\omega(x)$, $x \in \mathbb{R}$.
We denote the eigenspace for the generalized kernel of $L_-(\omega)$ by
\begin{equation}
\label{X_c_star}
X^*_{\omega} :={\rm span}\{W^{(1)}_\omega, W^{(2)}_\omega, \cdots, W^{(N-1)}_\omega\},
\end{equation}
Similarly to Lemma 5.4 in \cite{KP1}, the following lemma gives coercivity of the quadratic forms
associated with the operators $L_+(\omega)$ and $L_-(\omega)$.

\begin{lemma}
\label{orth_conditions}
For every $\omega > 0$, there exists a positive constant $C(\omega)$ such that
\begin{equation}
\label{joint_coercivity-a-zero}
\langle L_+(\omega) U, U \rangle_{L^2(\Gamma)} + \langle L_-(\omega) W, W \rangle_{L^2(\Gamma)} \geq C(\omega) \|U+iW\|^2_{H^1(\Gamma)}
\end{equation}
if $U \in X_{\omega}^*$ and $W \in X_{\omega}$ satisfying the additional orthogonality conditions
\begin{equation}
\label{orthogonal_constraints-linear-a-zero}
\left\{ \begin{array}{l}
\langle W, \partial_{\omega} \Phi_{\omega} \rangle_{L^2(\Gamma)} = 0, \\
\langle U,\Phi_{\omega} \rangle_{L^2(\Gamma)} = 0.  \end{array} \right.
\end{equation}
\end{lemma}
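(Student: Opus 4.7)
The plan is to split into separate coercivity estimates for $L_-(\omega)$ and $L_+(\omega)$ via spectral shifting and then add them. I read the two membership statements in the lemma as the standard symplectic orthogonality $U\perp X_\omega^*$ and $W\perp X_\omega$ in $L^2(\Gamma)$; together with \eqref{orthogonal_constraints-linear-a-zero} this supplies exactly the number of scalar constraints needed to absorb the non-positive part of $\sigma(L_+(\omega))\cup\sigma(L_-(\omega))$. The overall scheme mirrors Lemma~\ref{lem-coercivity}, but now the kernel of $L_+(\omega)$ is $(N-1)$-dimensional and requires a multi-constraint argument.

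For $L_-(\omega)$, the only non-positive eigenvalue is the simple zero eigenvalue spanned by $\Phi_\omega$. Since $\langle\Phi_\omega,\partial_\omega\Phi_\omega\rangle_{L^2(\Gamma)}=-D_1(\omega)\neq 0$ from \eqref{nonzero-entries-1}, the single constraint $\langle W,\partial_\omega\Phi_\omega\rangle_{L^2(\Gamma)}=0$ shifts this zero eigenvalue strictly to the right by the one-parameter spectral shift (Lemma~5.6 of \cite{KP1}), and G{\aa}rding's inequality upgrades the resulting $L^2$-bound to $\langle L_-(\omega)W,W\rangle_{L^2(\Gamma)}\geq c_1(\omega)\|W\|_{H^1(\Gamma)}^2$. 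The extra condition $W\perp X_\omega$ only restricts the subspace further and therefore preserves this bound.

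For $L_+(\omega)$ the discrete spectrum is $\{-3\omega\}\cup\{0\}^{N-1}\cup(\text{positive part})$. The constraint $\langle U,\Phi_\omega\rangle_{L^2(\Gamma)}=0$ removes the negative eigenvalue: using \eqref{kernel-generalized}, $L_+(\omega)^{-1}\Phi_\omega=-\partial_\omega\Phi_\omega$, so $\langle L_+(\omega)^{-1}\Phi_\omega,\Phi_\omega\rangle_{L^2(\Gamma)}=D_1(\omega)<0$ and the one-constraint positivity lemma applies. The $(N-1)$ constraints $\langle U,W_\omega^{(j)}\rangle_{L^2(\Gamma)}=0$ are designed to eliminate the kernel. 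The key computation is nondegeneracy of the Gram-type matrix $M_{ij}:=\langle W_\omega^{(i)},U_\omega^{(j)}\rangle_{L^2(\Gamma)}$: writing $W_\omega^{(j)}=\chi_\omega e_j$ and $U_\omega^{(j)}=\phi_\omega' e_j$ from \eqref{U-2}--\eqref{W_vector}, the matrix factors as the positive scalar $\langle\chi_\omega,\phi_\omega'\rangle_{L^2(\mathbb{R}^+)}=\tfrac14\|\phi_\omega\|_{L^2(\mathbb{R}^+)}^2$ (from integration by parts) times the Euclidean Gram matrix $(e_i\cdot e_j)$, which is diagonal with positive entries by the orthogonality of the basis $\{U_\omega^{(j)}\}$ in Lemma~\ref{kernel_elements}. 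Hence $M$ is positive-definite, and the multi-constraint shifting (the higher-dimensional analogue of Lemma~5.4 of \cite{KP1}) produces $\langle L_+(\omega)U,U\rangle_{L^2(\Gamma)}\geq c_2(\omega)\|U\|_{H^1(\Gamma)}^2$ after one more G{\aa}rding step.

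Adding the two bounds and using $\|U+iW\|_{H^1(\Gamma)}^2=\|U\|_{H^1(\Gamma)}^2+\|W\|_{H^1(\Gamma)}^2$ yields \eqref{joint_coercivity-a-zero} with $C(\omega)=\min\{c_1(\omega),c_2(\omega)\}$. The main technical obstacle is confirming that the negative-eigenvalue and kernel shifts for $L_+(\omega)$ can be carried out independently, i.e.\ that the full set of $N$ orthogonality vectors $\{\Phi_\omega, W_\omega^{(1)},\dots, W_\omega^{(N-1)}\}$ decouples into the two subsets each governing one shift. This reduces to verifying $\langle\Phi_\omega,W_\omega^{(j)}\rangle_{L^2(\Gamma)}=0$ for every $j$, which follows from the oddness of $x\phi_\omega^2$ on $\mathbb{R}$ combined with the balance condition \eqref{constraint}: the $\mathbb{R}^-$ contribution from the incoming edge exactly cancels the sum of the $\mathbb{R}^+$ contributions from the outgoing edges.
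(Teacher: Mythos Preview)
Your argument is correct and follows essentially the same route as the paper's own proof: sequential spectral shifting mirroring Lemma~\ref{lem-coercivity}, with the key input that the cross-Gram matrix $\{\langle W_\omega^{(i)},U_\omega^{(j)}\rangle\}$ is positive diagonal, which both you and the paper derive from the orthogonality of the vectors $e_j$. One small note: the decoupling $\langle\Phi_\omega,W_\omega^{(j)}\rangle=0$ you flag as the ``main technical obstacle'' is true but unnecessary for the sequential argument you outline---once the constraint $\langle U,\Phi_\omega\rangle=0$ yields $\langle L_+U,U\rangle\ge 0$ with equality only on $X_\omega\subset\Phi_\omega^\perp$ (Lemma~\ref{kernel_elements}), nondegeneracy of the cross-Gram matrix alone suffices to eliminate the kernel under the remaining constraints.
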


\begin{proof}
We claim that basis vectors in $X_{\omega}$ and $X_{\omega}^*$ satisfy
the following orthogonality conditions:
\begin{itemize}
\item $\{\langle U_\omega^{(j)}, U_\omega^{(k)} \rangle_{L^2(\Gamma)} \}_{1\leq j,k \leq N-1}$
is a positive diagonal matrix;
\item $\{\langle W_\omega^{(j)}, W_\omega^{(k)} \rangle_{L^2(\Gamma)} \}_{1\leq j,k \leq N-1}$
is a positive diagonal matrix;
\item $\{\langle U_\omega^{(j)}, W_\omega^{(k)} \rangle_{L^2(\Gamma)} \}_{1\leq j,k \leq N-1}$
is a positive diagonal matrix.
\end{itemize}
Indeed, orthogonality of $\{ U_\omega^{(1)},\dots, U_{\omega}^{(N-1)}\}$ is established by
Lemma \ref{kernel_elements}. Therefore, the vectors $\{ e_1, \dots, e_{N-1}\}$ in (\ref{U-2}) are
orthogonal in $\mathbb{R}^{N-1}$. Orthogonality of $\{ W_\omega^{(1)},\dots, W_{\omega}^{(N-1)}\}$
follows by the explicit representation (\ref{W_vector}) due to orthogonality
of the vectors $\{ e_1, \dots, e_{N-1}\}$ in $\mathbb{R}^{N-1}$. The sets
$\{ U_\omega^{(1)},\dots, U_{\omega}^{(N-1)}\}$ and
$\{ W_\omega^{(1)},\dots, W_{\omega}^{(N-1)}\}$ are mutually orthogonal by the same reason.
Finally, we have for every $j=1,\dots, N$
\begin{equation}
\label{positive-elements}
\langle U_\omega^{(j)}, W_\omega^{(j)} \rangle_{L^2(\Gamma)} =
\frac{\alpha_j^2}{4} \left(\sum_{i=j}^N \frac{1}{\alpha_i^2} \right)
\left(\sum_{i=j+1}^N \frac{1}{\alpha_i^2} \right)
\|\phi_\omega\|^2_{L^2(\mathbb{R}^+)} (> 0).
\end{equation}
The rest of the proof is similar to the proof of Lemma \ref{lem-coercivity} with the only difference that
the third orthogonality condition (\ref{orthogonal_constraints-linear}) is replaced by the $(N-1)$
orthogonality conditions in $U \in X_{\omega}^*$. The constraint $U \in X_{\omega}^*$
provide the shift of the zero eigenvalue
of $L_+(\omega)$ of algebraic multiplicity $(N-1)$ to positive eigenvalues thanks to
the condition that $\{\langle U_\omega^{(j)}, W_\omega^{(k)} \rangle_{L^2(\Gamma)} \}_{1\leq j,k \leq N-1}$
is a positive diagonal matrix.
\end{proof}

\section{Drift of the shifted states with $a>0$}
\label{sec-theorem-1}

The proof of Theorem \ref{main_theorem_1} is divided into several steps.
First, we decompose a unique global solution $\Psi$ to the NLS equation (\ref{eq1}) into the modulated stationary state
$\{e^{i\theta} \Phi_\omega (\cdot;a)\}_{\theta \in \mathbb{R}, a \in \mathbb{R}, \omega \in \mathbb{R}^+}$
and the symplectically orthogonal remainder terms. Second, we estimate the rate of change of the modulation parameter
$a(t)$ in time $t$ and show that $a'(t) < 0$ for $t > 0$. Third, we use energy estimates to control the time evolution
of the modulation parameter $\omega(t)$ and the remainder terms. Although the decomposition works for any $a(t)$,
we only consider $a(t) > 0$.

\subsection{Step 1: Symplectically orthogonal decomposition}

Any point in $H^1_{\Gamma}$ close to an orbit $\{e^{i\theta} \Phi (\cdot;a_0)\}_{\theta \in \mathbb{R}}$
for some $a_0 \in \mathbb{R}$ can be represented by a superposition of
a point on the family $\{e^{i\theta} \Phi_{\omega} (\cdot;a)\}_{\theta \in \mathbb{R}, a \in \mathbb{R}, \omega \in \mathbb{R}^+}$
and a symplectically orthogonal remainder term. Here and in what follows, we denote $\Phi \equiv \Phi_{\omega = 1}$.
The following lemma provides details of this symplectically orthogonal decomposition.

\begin{lemma}
\label{unique_decomposition}
Fix $a_0 \in \mathbb{R}$. There exists some $\delta_0>0$ such that for every $\Psi \in H^1_\Gamma$ satisfying
\begin{equation}
\label{orth-given}
\delta := \inf_{\theta \in \mathbb{R}} \| \Psi - e^{i \theta} \Phi(\cdot; a_0) \|_{H^1(\Gamma)} \leq \delta_0,
\end{equation}
there exists a unique choice for real-valued $(\theta,\omega,a) \in \mathbb{R} \times \mathbb{R}^+ \times \mathbb{R}$ and real-valued
$(U,W) \in H^1_{\Gamma} \times H^1_{\Gamma}$ in the decomposition
\begin{equation}
\label{orth-decomposition}
\Psi(x) = e^{i \theta} \left[ \Phi_{\omega}(x;a) + U(x) + i W(x) \right],
\end{equation}
subject to the orthogonality conditions
\begin{equation}
\label{orthogonal_constraints}
\left\{ \begin{array}{l}
\langle W, \partial_{\omega} \Phi_{\omega}(\cdot;a) \rangle_{L^2(\Gamma)} = 0, \\
\langle U,\Phi_{\omega}(\cdot;a) \rangle_{L^2(\Gamma)} = 0, \\
\langle U, (\cdot + a)\Phi_{\omega}(\cdot;a) \rangle_{L^2(\Gamma)} = 0,  \end{array} \right.
\end{equation}
where $\omega$, $a$, and $(U,W)$ satisfy the estimate
\begin{equation}
\label{orth-bound}
| \omega - 1| + |a-a_0| + \| U + iW \|_{H^1(\Gamma)} \leq C \delta,
\end{equation}
for some positive constant $C > 0$. Moreover, the map
from $\Psi \in H^1_\Gamma$ to $(\theta,\omega,a) \in \mathbb{R} \times \mathbb{R}^+ \times \mathbb{R}$ and $(U,W) \in H^1_{\Gamma} \times H^1_{\Gamma}$
is $C^{\omega}$.
\end{lemma}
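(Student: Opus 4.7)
The plan is to apply the analytic implicit function theorem to a system of three scalar equations encoding the orthogonality conditions (\ref{orthogonal_constraints}). Pick $\theta_* \in [0,2\pi)$ attaining the infimum in (\ref{orth-given}), so that $\|\Psi - e^{i\theta_*}\Phi(\cdot;a_0)\|_{H^1(\Gamma)} = \delta$. For $(\theta,\omega,a)\in \mathbb{R}\times\mathbb{R}^+\times \mathbb{R}$ and $\Psi\in H^1_\Gamma$ set
$$V(\theta,\omega,a;\Psi) := e^{-i\theta}\Psi - \Phi_\omega(\cdot;a), \qquad U:=\mathrm{Re}\,V, \quad W:=\mathrm{Im}\,V,$$
and define $F=(F_1,F_2,F_3):\mathbb{R}\times\mathbb{R}^+\times\mathbb{R}\times H^1_\Gamma\to\mathbb{R}^3$ by
$$F_1 := \langle W, \partial_\omega\Phi_\omega(\cdot;a)\rangle_{L^2(\Gamma)}, \quad F_2 := \langle U,\Phi_\omega(\cdot;a)\rangle_{L^2(\Gamma)}, \quad F_3 := \langle U,(\cdot+a)\Phi_\omega(\cdot;a)\rangle_{L^2(\Gamma)}.$$
At the reference point $(\theta_*,1,a_0,e^{i\theta_*}\Phi(\cdot;a_0))$ one has $V = 0$, and hence $F = 0$.

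The core step is to show that the Jacobian $J := D_{(\theta,\omega,a)}F$ at the reference point is invertible. Since $V = 0$ there, derivatives of the second factors in each $F_j$ contribute nothing, and only $\partial_\theta V|_0 = -i\Phi(\cdot;a_0)$, $\partial_\omega V|_0 = -\partial_\omega\Phi_\omega(\cdot;a_0)|_{\omega=1}$, and $\partial_a V|_0 = -\Phi'(\cdot;a_0)$ contribute. Because $\mathrm{Re}(-i\Phi) = 0$ while the other two variations are real, the entries $\partial_\theta F_2$, $\partial_\theta F_3$, $\partial_\omega F_1$, and $\partial_a F_1$ all vanish. The last off-diagonal entry is $\partial_a F_2 = -\langle \Phi'(\cdot;a_0),\Phi(\cdot;a_0)\rangle_{L^2(\Gamma)}$, and this vanishes thanks to the balanced constraint (\ref{constraint}):
$$\langle \Phi'(\cdot;a_0),\Phi(\cdot;a_0)\rangle_{L^2(\Gamma)} = \tfrac{1}{2}\sech^2(a_0)\Big(\frac{1}{\alpha_1^2} - \sum_{j=2}^N \frac{1}{\alpha_j^2}\Big) = 0.$$
The three diagonal entries are $\partial_\theta F_1 = -\langle \Phi,\partial_\omega\Phi\rangle_{L^2(\Gamma)} = D_1(1)$, $\partial_\omega F_2 = -\langle \partial_\omega\Phi,\Phi\rangle_{L^2(\Gamma)} = D_1(1)$, and $\partial_a F_3 = -\langle \Phi'(\cdot;a_0),(\cdot+a_0)\Phi(\cdot;a_0)\rangle_{L^2(\Gamma)} = D_2(1)$, all nonzero by (\ref{nonzero-entries-1})--(\ref{nonzero-entries-2}). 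Thus $J$ is triangular with nonzero diagonal, hence invertible.

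The analytic implicit function theorem then produces a unique real-analytic map $\Psi \mapsto (\theta(\Psi),\omega(\Psi),a(\Psi))$ defined on an $H^1_\Gamma$-neighborhood of $e^{i\theta_*}\Phi(\cdot;a_0)$ that solves $F(\theta,\omega,a,\Psi) = 0$ and satisfies the Lipschitz estimate $|\theta(\Psi)-\theta_*| + |\omega(\Psi)-1| + |a(\Psi)-a_0| \leq C\delta$ for a constant $C = C(a_0) > 0$. Setting $U + iW := e^{-i\theta(\Psi)}\Psi - \Phi_{\omega(\Psi)}(\cdot;a(\Psi))$ yields the decomposition (\ref{orth-decomposition}) with the orthogonality conditions (\ref{orthogonal_constraints}) built in by construction; the $H^1$-bound on $\|U+iW\|_{H^1(\Gamma)}$ in (\ref{orth-bound}) follows from the triangle inequality applied to $e^{-i\theta(\Psi)}\Psi - \Phi_{\omega(\Psi)}(\cdot;a(\Psi)) = e^{-i\theta(\Psi)}\bigl(\Psi - e^{i\theta_*}\Phi(\cdot;a_0)\bigr) + \bigl(e^{i(\theta_*-\theta(\Psi))}-1\bigr)\Phi(\cdot;a_0) + \bigl(\Phi(\cdot;a_0) - \Phi_{\omega(\Psi)}(\cdot;a(\Psi))\bigr)$, combined with the parameter estimate and the smoothness of $(\omega,a)\mapsto \Phi_\omega(\cdot;a)\in H^1_\Gamma$. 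I expect the only delicate point to be the verification that $J$ has triangular form: this is the first place in the decomposition argument where the balanced constraint (\ref{constraint}) is essentially used, and it is precisely what decouples the three modulation parameters and allows them to be solved independently from the three orthogonality conditions.
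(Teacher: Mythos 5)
Your proposal is correct and follows essentially the same route as the paper: encode the three orthogonality conditions in a map $F(\theta,\omega,a;\Psi)$, verify via (\ref{nonzero-entries-1})--(\ref{nonzero-entries-2}) and the cancellation $\langle \Phi'(\cdot;a_0),\Phi(\cdot;a_0)\rangle_{L^2(\Gamma)}=0$ (which uses (\ref{constraint})) that the Jacobian in $(\theta,\omega,a)$ is triangular with nonzero diagonal, and conclude by the analytic implicit/inverse function theorem plus a triangle-inequality estimate for $U+iW$. The only cosmetic difference is that you linearize at the exact modulated soliton $e^{i\theta_*}\Phi(\cdot;a_0)$ where $F$ vanishes, whereas the paper linearizes at the given $\Psi$ and treats the Jacobian as an $\mathcal{O}(\delta)$ perturbation of the diagonal matrix ${\rm diag}(d_1,d_1,d_2)$; this does not change the substance of the argument.
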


\begin{proof}
Define the following vector function $G(\theta,\omega, a;\Psi) : \mathbb{R} \times \mathbb{R}^+ \times \mathbb{R}
\times H^1_{\Gamma} \mapsto \mathbb{R}^3$ given by
$$
G(\theta,\omega, a;\Psi) := \left[ \begin{array}{l}
 \langle {\rm Im}(\Psi - e^{i \theta} \Phi_{\omega}(\cdot;a)), \partial_{\omega} \Phi_{\omega}(\cdot;a) \rangle_{L^2(\Gamma)} \\
 \langle {\rm Re}(\Psi - e^{i \theta} \Phi_{\omega}(\cdot;a)), \Phi_{\omega}(\cdot;a) \rangle_{L^2(\Gamma)} \\
 \langle {\rm Re}(\Psi - e^{i \theta} \Phi_{\omega}(\cdot;a)), (\cdot + a)\Phi_{\omega}(\cdot;a) \rangle_{L^2(\Gamma)}
 \end{array} \right],
$$
the zeros of which represent the orthogonality constraints in (\ref{orthogonal_constraints}).
The function $G(\theta,\omega, a;\Psi)$ is $C^{\omega}$ with respect to its arguments.

Let $\theta_0$ be the argument of
$\inf_{\theta \in \mathbb{R}} \| \Psi - e^{i \theta} \Phi(\cdot;a_0) \|_{H^1(\Gamma)}$
for a given $\Psi \in H^1_{\Gamma}$. The vector function $G$ is a $C^{\omega}$ map from $\mathbb{R} \times \mathbb{R}^+ \times \mathbb{R}$
to $\mathbb{R}^3$
since the map $\mathbb{R}^+ \times \mathbb{R} \ni (\omega,a) \mapsto \Phi_{\omega}(\cdot;a) \in L^2(\Gamma)$ is $C^{\omega}$ in both variables.
Moreover, if $\Psi \in H^1_{\Gamma}$ satisfies (\ref{orth-given}), then
\begin{equation}
\label{bound-on-G}
\| G(\theta_0,1, a_0;\Psi) \|_{\mathbb{R}^3} \leq C \delta
\end{equation}
for a $\delta$-independent constant $C > 0$. Also we have
\begin{equation*}
D_{(\theta,\omega, a)} G(\theta_0,1, a_0;\Psi)  = D + B,
\end{equation*}
where $D = {\rm diag}(d_1,d_1,d_2)$ with entries $d_1 \equiv D_1(\omega = 1)$
and $d_2 \equiv D_2(\omega = 1)$ given by (\ref{nonzero-entries-1}) and (\ref{nonzero-entries-2}),
whereas $B$ is a matrix satisfying the estimate $\| B \|_{\mathbb{M}_{3 \times 3}} \leq C \delta$ for a $\delta$-independent constant $C>0$.
Since $d_1, d_2 \neq 0$, the matrix $D$ is invertible and there exists $\delta_0>0$ such that
the Jacobian $D_{(\theta,\omega, a)} G(\theta_0,1, a_0;\Psi)$
is invertible for every $\delta \in (0, \delta_0)$ with the bound
\begin{equation}
\label{bound-on-D-G}
\| [D_{(\theta,\omega, a)} G(\theta_0,1, a_0;\Psi)]^{-1} \|_{\mathbb{M}_{3 \times 3}} \leq C
\end{equation}
for a $\delta$-independent constant $C > 0$.
By the local inverse mapping theorem, for the given $\Psi \in H^1_\Gamma$ satisfying (\ref{orth-given}),
the equation $G(\theta, \omega, a; \Psi) = 0$ has a unique solution $(\theta, \omega, a) \in \mathbb{R}^3$
in a neighborhood of the point $(\theta_0, 1, a_0)$. Since $G(\theta,\omega, a;\Psi)$ is $C^{\omega}$
with respect to its arguments, the solution $(\theta, \omega, a) \in \mathbb{R} \times \mathbb{R}^+ \times \mathbb{R}$ is $C^{\omega}$
with respect to $\Psi \in H^1_{\Gamma}$. The Taylor expansion of $G(\theta, \omega, a; \Psi) = 0$
around $(\theta_0,1,a_0)$,
$$
0=G(\theta_0, 1, a_0; \Psi)  + D_{(\theta,\omega, a)} G(\theta_0,1, a_0;\Psi) (\theta-\theta_0, \omega-1, a-a_0)^T +
\mathcal{O}(|\theta - \theta_0|^2 + |\omega - 1|^2 + |a - a_0|^2),
$$
together with the bounds (\ref{bound-on-G}) and (\ref{bound-on-D-G})
implies the bound (\ref{orth-bound}) for $|\omega-1|$ and $|a-a_0|$.
From the decomposition (\ref{orth-decomposition}) and with use of the triangle inequality for $(\theta,\omega, a)$
near $(\theta_0,1, a_0)$, it follows
that $(U,W)$ are uniquely defined in $H^1_{\Gamma}$ and satisfy the bound in (\ref{orth-bound}).
In addition, $(U,W) \in H^1_{\Gamma}$ are $C^{\omega}$
with respect to $\Psi \in H^1_{\Gamma}$.
\end{proof}

If $\delta > 0$ in the initial bound (\ref{theorem_bound_delta}) is sufficiently small,
we can represent the initial datum $\Psi_0 \in H^1_\Gamma$ to the Cauchy problem
associated with the NLS equation (\ref{eq1}) in the form:
\begin{equation}
\label{initial_datum}
\Psi_0(x) = \Phi(x;a_0) + U_0(x) + i W_0(x), \quad \| U_0  + iW_0 \|_{H^1(\Gamma)}\leq \delta,
\end{equation}
subject to the orthogonality conditions
\begin{equation}
\label{initial_constraints}
\left\{ \begin{array}{l}
\langle W_0, \partial_{\omega} \Phi_{\omega}|_{\omega=1}(\cdot;a_0) \rangle_{L^2(\Gamma)} = 0, \\
\langle U_0,\Phi(\cdot;a_0) \rangle_{L^2(\Gamma)} = 0, \\
\langle U_0, (\cdot + a_0)\Phi(\cdot;a_0) \rangle_{L^2(\Gamma)} = 0.
\end{array} \right.
\end{equation}
By Lemma \ref{unique_decomposition}, the orthogonal decomposition (\ref{initial_datum})
with (\ref{initial_constraints}) implies that $\theta(0) = 0$ and $\omega(0) = 1$ initially.
Although this is not the most general case for the initial datum satisfying (\ref{theorem_bound_delta}),
this simplification is used to illustrate the proof of Theorem \ref{main_theorem_1}.
A generalization for initial datum $\Psi_0 \in H^1_{\Gamma}$ with $\theta(0) \neq 0$ and $\omega(0) \neq 1$ is straightforward.

By the global well-posedness theory \cite{AdamiJDE1,KP2},
the NLS equation (\ref{eq1}) with the initial datum $\Psi_0 \in H^1_{\Gamma}$
generates a unique solution $\Psi \in C(\mathbb{R}, H^1_\Gamma) \cap C^1(\mathbb{R}, H^{-1}_\Gamma)$.
By continuous dependence of the solution on the initial datum and by Lemma \ref{unique_decomposition},
for every $\epsilon \in (0, \delta_0)$ with $\delta_0$ in the bound (\ref{orth-given})
there exists $t_0>0$ such that the unique solution $\Psi$ satisfies
\begin{equation}
\label{delta_0_bound-sec4}
\inf_{\theta \in \mathbb{R}} \| e^{-i \theta} \Psi(t,\cdot) - \Phi \|_{H^1(\Gamma)} \leq \epsilon, \quad t\in [0,t_0]
\end{equation}
and can be uniquely decomposed in the form:
\begin{equation}
\label{sol_orthogonal_decom}
\Psi(t,x) = e^{i \theta (t)}\left[ \Phi_{\omega(t)}(x;a(t)) + U(t,x) + i W(t,x) \right],
\end{equation}
subject to the orthogonality conditions
\begin{equation}
\label{orth-decomposition-time-constraints}
\left\{ \begin{array}{l}
\langle W(t,\cdot), \partial_{\omega} \Phi_{\omega} |_{\omega = \omega(t)} (\cdot;a(t))\rangle_{L^2(\Gamma)} =0, \\
\langle U(t,\cdot), \Phi_{\omega(t)}(\cdot;a(t)) \rangle_{L^2(\Gamma)} = 0, \\
\langle U(t,\cdot), (\cdot + a(t))\Phi_{\omega}(\cdot;a(t)) \rangle_{L^2(\Gamma)} = 0.\end{array} \right.
\end{equation}
By the smoothness of the map in Lemma \ref{unique_decomposition} and
by the well-posedness of the time flow of the NLS equation (\ref{eq1}), we have
$U,W \in C([0,t_0], H^1_\Gamma) \cap C^1([0,t_0], H^{-1}_\Gamma)$
and $(\theta,\omega,a) \in C^1([0,t_0],\mathbb{R} \times \mathbb{R}^+ \times \mathbb{R})$.

In order to prove Theorem \ref{main_theorem_1}, we control $\omega(t)$, $U(t,\cdot)$, and $W(t,\cdot)$
from energy estimates and $a(t)$ from modulation equations, whereas $\theta(t)$ plays
no role in the bound (\ref{theorem_bound_epsilon}). Note that the modulation of $a(t)$
captures the irreversible drift of the shifted states along the incoming edge
towards the vertex of the balanced star graph.
We would not see this drift without using the parameter $a(t)$ and we would not be
able to control $\omega(t)$, $U(t,\cdot)$, and $W(t,\cdot)$ from energy estimates
without the third constraint in (\ref{orth-decomposition-time-constraints}) because
of the zero eigenvalue of $L_+(\omega,a)$, see Lemma \ref{lem-coercivity}.

\subsection{Step 2: Monotonicity of $a(t)$}

We use the orthogonal decomposition (\ref{sol_orthogonal_decom}) with (\ref{orth-decomposition-time-constraints})
in order to obtain the evolution system for the remainder terms $(U,W)$ and for the modulation parameters $(\theta,\omega,a)$.
By analyzing the modulation equation for $a(t)$, we relate the rate of change of $a(t)$
and the value of the momentum functional $P(\Psi)$ given by (\ref{momentum}).

\begin{lemma}
\label{lem-monotonicity}
Assume that the unique solution $\Psi \in C([0,t_0], H^1_\Gamma) \cap C^1([0,t_0], H^{-1}_\Gamma)$
represented by (\ref{sol_orthogonal_decom}) and (\ref{orth-decomposition-time-constraints})
satisfies
\begin{equation}
\label{apriori-bound}
|\omega(t) - 1 | + \| U(t,\cdot) + i W(t,\cdot) \|_{H^1(\Gamma)} \leq \epsilon, \quad t \in [0,t_0]
\end{equation}
with $\epsilon \in (0,\delta_0)$ and $\delta_0$ defined in (\ref{orth-given}).
The time evolution of the translation parameter $a(t)$ is given by
\begin{equation}
\label{change_of_a}
\dot{a}(t) = -\alpha_1^2 \omega^{-\frac{1}{2}} P(\Psi) \left[ 1 + \mathcal{O}(\|U+iW\|_{H^1(\Gamma)}) \right]
+ \mathcal{O}(\|U+iW\|^2_{H^1(\Gamma)}),
\end{equation}
where $P(\Psi)$ is given by (\ref{momentum}).
\end{lemma}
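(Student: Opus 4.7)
The plan is to differentiate the third orthogonality condition
\[
\langle U(t,\cdot),(\cdot+a(t))\Phi_{\omega(t)}(\cdot;a(t))\rangle_{L^2(\Gamma)} = 0
\]
in $t$ and then substitute an expression for $\partial_t U$ obtained from the NLS equation. Differentiation, combined with the second constraint $\langle U,\Phi_\omega\rangle=0$, reduces the equation to
\[
\langle\partial_t U,(\cdot+a)\Phi_\omega\rangle + \dot a\,\langle U,(\cdot+a)\Phi_\omega'\rangle + \dot\omega\,\langle U,(\cdot+a)\partial_\omega\Phi_\omega\rangle = 0.
\]
Inserting the decomposition (\ref{sol_orthogonal_decom}) into (\ref{eq1}), using $L_-(\omega,a)\Phi_\omega(\cdot;a)=0$, and extracting the imaginary part produces
\[
\partial_t U = L_-(\omega,a)W + (\dot\theta-\omega)W - \dot\omega\,\partial_\omega\Phi_\omega - \dot a\,\Phi_\omega' + \mathcal{N}(U,W),
\]
where $\mathcal{N}(U,W)$ collects the quadratic and cubic contributions from $|\Psi|^2\Psi$.

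I would then pair this expression with $(\cdot+a)\Phi_\omega$. Self-adjointness of $L_-(\omega,a)$ combined with the Jordan-block relation $L_-(\omega,a)(\cdot+a)\Phi_\omega=-2\Phi_\omega'$ from (\ref{kernel-generalized}) turns the leading contribution into $-2\langle W,\Phi_\omega'\rangle$, while $-\dot a\,\langle\Phi_\omega',(\cdot+a)\Phi_\omega\rangle=\dot a\,D_2(\omega)=\dot a\,\omega^{1/2}/\alpha_1^2$ from (\ref{nonzero-entries-2}) supplies the required coefficient of $\dot a$. The identification with $P(\Psi)$ then comes from the identity
\[
P(\Psi) = -2\langle W,\Phi_\omega'\rangle - 2\langle W,U'\rangle,
\]
obtained by integrating (\ref{momentum}) by parts on each edge: the vertex boundary contributions cancel because the continuity condition in (\ref{H1}) together with the balanced constraint (\ref{constraint}) forces $w_1(0)\phi_{\omega,1}(0)-\sum_{j\geq 2}w_j(0)\phi_{\omega,j}(0)=0$, and likewise with $\Phi_\omega$ replaced by $U$. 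The residual $-2\langle W,U'\rangle$ is manifestly $\mathcal{O}(\|U+iW\|_{H^1(\Gamma)}^2)$.

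To close the argument I would couple the $a$-equation with the modulation equations for $(\dot\theta,\dot\omega)$ obtained by differentiating the first two orthogonality conditions and substituting the analogous real-part identity $\partial_t W=(\omega-\dot\theta)(\Phi_\omega+U)-L_+(\omega,a)U+\mathcal{N}(U,W)$. Using $L_+(\omega,a)\partial_\omega\Phi_\omega=-\Phi_\omega$ from (\ref{kernel-generalized}) together with the two parity identities $\langle\Phi_\omega',\Phi_\omega\rangle=0$ and $\langle\partial_\omega\Phi_\omega,(\cdot+a)\Phi_\omega\rangle=0$ (both consequences of odd-parity integrands in the scaling variable $z=\omega^{1/2}(x+a)$ combined with the balanced constraint), the resulting $3\times 3$ linear system for $(\dot\theta-\omega,\dot\omega,\dot a)$ has leading coefficient matrix that is diagonal with nonvanishing entries $D_1(\omega),D_1(\omega),D_2(\omega)$ from (\ref{nonzero-entries-1})--(\ref{nonzero-entries-2}). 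Under the smallness hypothesis (\ref{apriori-bound}) this matrix is invertible with $(U,W)$-independent bounds, and a bootstrap yields $\dot\omega,\dot\theta-\omega=\mathcal{O}(\|U+iW\|_{H^1(\Gamma)}^2)$.

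The main obstacle is the bookkeeping of the cross-coupling entries in the $\dot a$-row, notably the inner products $\langle W,(\cdot+a)\Phi_\omega\rangle$ and $\langle U,(\cdot+a)\partial_\omega\Phi_\omega\rangle$, neither of which is controlled by the three orthogonality constraints and each of which is only $\mathcal{O}(\|U+iW\|_{H^1(\Gamma)})$. The observation that saves the argument is that they appear multiplied by $\dot\theta-\omega$ and $\dot\omega$ respectively, both $\mathcal{O}(\|U+iW\|_{H^1(\Gamma)}^2)$ by the bootstrap, so their net contribution is absorbed into the additive $\mathcal{O}(\|U+iW\|_{H^1(\Gamma)}^2)$ remainder in (\ref{change_of_a}). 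The multiplicative correction $1+\mathcal{O}(\|U+iW\|_{H^1(\Gamma)})$ to the $P(\Psi)$ term then arises from inverting the $\mathcal{O}(\|U\|)$ correction to the $D_2(\omega)$ diagonal entry via a Neumann-series expansion.
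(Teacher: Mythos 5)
Your argument is correct and is essentially the paper's proof: differentiating the three orthogonality conditions after inserting the decomposition (\ref{sol_orthogonal_decom}) into (\ref{eq1}) is exactly how the modulation system (\ref{time-evolution-ode}) is obtained, the invertibility of the leading matrix rests on the same nonzero quantities $D_1(\omega)$, $D_2(\omega)$ from (\ref{nonzero-entries-1})--(\ref{nonzero-entries-2}) together with the generalized-kernel relations (\ref{kernel-generalized}), and the identification $P(\Psi)=-2\langle \Phi_\omega',W\rangle_{L^2(\Gamma)}+\mathcal{O}(\|U+iW\|^2_{H^1(\Gamma)})$ via integration by parts with cancelling vertex terms is precisely (\ref{momentum_decomposition}). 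The only cosmetic differences (your row ordering makes the leading matrix diagonal rather than the paper's block form, and you justify the vanishing vertex contributions through the continuity condition (\ref{H1}) plus (\ref{constraint})) do not change the argument.
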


\begin{proof}
By substituting (\ref{sol_orthogonal_decom}) into the NLS equation (\ref{eq1})
and by using the rotational and translation symmetries, we obtain the time evolution
system for the remainder terms:
\begin{eqnarray}
\nonumber
\frac{d}{dt} \begin{pmatrix} U \\ W \end{pmatrix} & = &
\begin{pmatrix} 0 & L_-(\omega,a) \\ -L_+(\omega,a) & 0 \end{pmatrix} \begin{pmatrix} U \\ W \end{pmatrix}
+ (\dot{\theta} - \omega) \begin{pmatrix} W \\ -(\Phi_{\omega} + U) \end{pmatrix} \\
& \phantom{t} &
- \dot{\omega} \begin{pmatrix} \partial_{\omega} \Phi_{\omega} \\ 0 \end{pmatrix}
- \dot{a} \begin{pmatrix} \Phi_{\omega}' \\ 0 \end{pmatrix}
+ \begin{pmatrix} -R_U \\ R_W \end{pmatrix},
\label{time-evolution}
\end{eqnarray}
where $\Phi_{\omega} \equiv \Phi_{\omega}(x;a)$, the prime denotes derivative in $x$,
the dot denotes derivative in $t$, the linearized operators are given by (\ref{L_a_omega}), and the residual terms are given by
\begin{eqnarray}
\left\{ \begin{array}{l}
R_U =  2 \alpha^2 \left( 2 \Phi_{\omega} U + U^2 + W^2 \right) W, \\
R_W =  2 \alpha^2 \left[\Phi_{\omega} ( 3 U^2 + W^2) + (U^2 + W^2) U \right].\end{array} \right.
\label{R_U_W_terms}
\end{eqnarray}
By using the orthogonality conditions (\ref{orth-decomposition-time-constraints}),
we obtain the modulation equations for parameters $(\theta,\omega, a)$ from the system (\ref{time-evolution}):
{\small \begin{equation}
\label{time-evolution-ode}
A \left[ \begin{matrix} \dot{\theta} - \omega \\ \dot{\omega} \\ \dot{a} \end{matrix} \right]
= \left[ \begin{matrix} 0 \\ 0 \\
- 2\langle \Phi_{\omega}'(\cdot;a), W \rangle_{L^2(\Gamma)} \end{matrix} \right] +
\left[ \begin{matrix} \langle \Phi_{\omega}(\cdot;a), R_U \rangle_{L^2(\Gamma)} \\ \langle \partial_{\omega} \Phi_{\omega}, R_W \rangle_{L^2(\Gamma)} \\
- \langle (\cdot + a) \Phi_{\omega}(\cdot;a), R_W \rangle_{L^2(\Gamma)} \end{matrix} \right],
\end{equation}}where the matrix $A$ is given by
{\small $$
A = \left[ \begin{matrix} \langle \Phi_{\omega}(\cdot;a), W \rangle_{L^2(\Gamma)} &
-\langle \partial_{\omega} \Phi_{\omega}(\cdot;a), \Phi_{\omega}(\cdot;a) - U \rangle_{L^2(\Gamma)} &
\langle \Phi_{\omega}'(\cdot;a), U \rangle_{L^2(\Gamma)} \\
\langle \partial_{\omega} \Phi_{\omega}(\cdot;a), \Phi_{\omega}(\cdot;a) + U  \rangle_{L^2(\Gamma)} &
-\langle \partial_{\omega}^2 \Phi_{\omega}(\cdot;a), W \rangle_{L^2(\Gamma)} &
-\langle \partial_\omega \Phi_{\omega}'(\cdot;a), W \rangle_{L^2(\Gamma)} \\
-\langle (\cdot + a) \Phi_\omega(\cdot;a), W  \rangle_{L^2(\Gamma)} &
-\langle (\cdot + a) \partial_{\omega} \Phi_\omega(\cdot;a), U \rangle_{L^2(\Gamma)} &
\langle (\cdot + a) \Phi_\omega(\cdot;a)', \Phi_{\omega}(\cdot;a) - U \rangle_{L^2(\Gamma)}
 \end{matrix} \right].
$$}If $(U,W) = (0,0)$, the matrix $A$ is invertible since
$$
A_0 = \left[ \begin{matrix}0 & D_1(\omega) & 0 \\
-D_1(\omega) & 0 & 0 \\
0 & 0 & - D_2(\omega)  \end{matrix} \right].
$$
has nonzero elements thanks to (\ref{nonzero-entries-1}) and (\ref{nonzero-entries-2}). Therefore,
under the assumption (\ref{apriori-bound}) with small $\epsilon > 0$,
we have
\begin{equation}
\label{bound-on-inverse-A}
\| A^{-1} \|_{\mathbb{M}_{3 \times 3}} \leq C
\end{equation}
for an $\epsilon$-independent constant $C > 0$. This bound implies that
the time-evolution of the translation parameter $a(t)$ is given by
\begin{equation}
\label{change_of_a-again}
\dot{a} = \frac{2\langle \Phi_{\omega}'(\cdot;a), W \rangle_{L^2(\Gamma)}}{D_1(\omega)}
\left[ 1 + \mathcal{O}(\|U+iW\|_{H^1(\Gamma)}) \right] + \mathcal{O}(\|U+iW\|^2_{H^1(\Gamma)}).
\end{equation}
On the other hand, the momentum functional $P(\Psi)$ in (\ref{momentum})
can be computed at the solution $\Psi$ in the orthogonal decomposition (\ref{sol_orthogonal_decom})
as follows
\begin{eqnarray}
\nonumber
P(\Psi) & = & \langle \Phi_{\omega}(\cdot;a), W' \rangle_{L^2(\Gamma)}
- \langle \Phi_{\omega}'(\cdot;a), W \rangle_{L^2(\Gamma)} + \mathcal{O}(\|U+iW\|^2_{H^1(\Gamma)}) \\
& = & - 2 \langle \Phi_{\omega}'(\cdot;a), W \rangle_{L^2(\Gamma)} + \mathcal{O}(\|U+iW\|^2_{H^1(\Gamma)}),
\label{momentum_decomposition}
\end{eqnarray}
where the integration by parts does not result in any contribution from the vertex at $x = 0$
thanks to the boundary conditions in (\ref{H2}) and the constraint (\ref{constraint}).
Combining (\ref{change_of_a-again}) and (\ref{momentum_decomposition}) with the exact computation (\ref{nonzero-entries-2})
yields expansion (\ref{change_of_a}).
\end{proof}

\begin{corollary}
\label{corollary-a}
In addition to (\ref{apriori-bound}), assume that $\Psi_0$ in (\ref{initial_datum}) is chosen
such that $P(\Psi_0)>0$. There exists $\epsilon_0$ sufficiently small such that for every $\epsilon \in (0,\epsilon_0)$
there exists $\delta > 0$ such that the map $t \mapsto a(t)$ is strictly decreasing for $t \in [0,t_0]$.
\end{corollary}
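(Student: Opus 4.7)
The plan is to combine two facts already established in the paper. First, the momentum inequality (\ref{dmom_dt}) ensures that $t \mapsto P(\Psi(t))$ is non-decreasing along any weak solution to (\ref{eq1}). Second, Lemma \ref{lem-monotonicity} supplies the expansion
$$ \dot a(t) = -\alpha_1^2 \omega(t)^{-1/2} P(\Psi(t))\bigl[1 + \mathcal{O}(\|U+iW\|_{H^1(\Gamma)})\bigr] + \mathcal{O}(\|U+iW\|^2_{H^1(\Gamma)}). $$
The strategy is to inject the monotone momentum into this expansion and to show that the leading term is strictly negative and dominates the remainder once $\epsilon$ is taken sufficiently small in terms of the fixed positive quantity $P(\Psi_0)$.

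Executing this, momentum monotonicity gives $P(\Psi(t)) \geq P(\Psi_0) > 0$ for all $t \in [0, t_0]$. Under the a priori bound (\ref{apriori-bound}) we have $\omega(t)^{-1/2} \geq (1+\epsilon)^{-1/2}$, and the decomposition (\ref{momentum_decomposition}) together with $\|U+iW\|_{H^1(\Gamma)} \leq \epsilon$ yields $|P(\Psi(t))| \leq C \epsilon$, so that the product $P(\Psi) \cdot \mathcal{O}(\epsilon)$ is absorbed into an $\mathcal{O}(\epsilon^2)$ remainder. Substituting, I obtain
$$ \dot a(t) \leq -\alpha_1^2 (1+\epsilon)^{-1/2} P(\Psi_0) + C' \epsilon^2, $$
with $C'$ independent of both $\epsilon$ and $\delta$. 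Choosing $\epsilon_0 = \epsilon_0(P(\Psi_0)) > 0$ small enough that $C' \epsilon_0^2 < \frac{1}{2}\alpha_1^2 (1+\epsilon_0)^{-1/2} P(\Psi_0)$ then gives $\dot a(t) \leq -\frac{1}{2}\alpha_1^2 (1+\epsilon)^{-1/2} P(\Psi_0) < 0$ uniformly on $[0, t_0]$, so $a$ is strictly decreasing on this interval.

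The role of $\delta$ is technical: having fixed $\epsilon \in (0, \epsilon_0)$, I would take $\delta$ small enough that Lemma \ref{unique_decomposition} produces the orthogonal decomposition (\ref{sol_orthogonal_decom})--(\ref{orth-decomposition-time-constraints}) and that, by continuous dependence of the NLS flow on its initial datum, the a priori bound (\ref{apriori-bound}) assumed in Lemma \ref{lem-monotonicity} indeed persists throughout $[0, t_0]$. The one subtlety worth flagging is that $P(\Psi_0)$ itself is only $\mathcal{O}(\delta)$, since $P(\Phi(\cdot;a_0)) = 0$ for the real shifted state, so a naive estimate would leave leading and error terms of comparable order. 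It is exactly the momentum monotonicity (\ref{dmom_dt}) that freezes the positive lower bound $P(\Psi(t)) \geq P(\Psi_0)$ in place for all $t$, which is why $\epsilon_0$ can be chosen in terms of $P(\Psi_0)$ once and for all. Beyond this observation, the argument is a direct computation and does not present a substantial obstacle.
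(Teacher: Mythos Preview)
Your core computation is correct and matches the paper: momentum monotonicity gives $P(\Psi(t)) \geq P(\Psi_0) > 0$, and substituting into the expansion (\ref{change_of_a}) yields $\dot a(t) \leq -c\, P(\Psi_0) + C'\epsilon^2$. The problem is in the quantifier bookkeeping, which you yourself flag but then resolve in the wrong direction.

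You choose $\epsilon_0 = \epsilon_0(P(\Psi_0))$, but as you note $P(\Psi_0) = \mathcal{O}(\delta)$ since the real shifted state has zero momentum. In the statement (and, more importantly, in the way the corollary is used in the proof of Theorem~\ref{main_theorem_1}), $\delta$ is selected \emph{after} $\epsilon$, which is selected after $\epsilon_0$; so letting $\epsilon_0$ depend on $P(\Psi_0)$ is circular. Your sentence ``I would take $\delta$ small enough that\ldots'' points exactly the wrong way: shrinking $\delta$ shrinks $P(\Psi_0)$, which then demands a still smaller $\epsilon_0$, and no consistent choice closes.

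The paper reverses the comparison. Instead of making $\epsilon$ small relative to $P(\Psi_0)$, it uses $C_-\delta \leq P(\Psi_0)$ and imposes a \emph{lower} bound $\delta \geq C\epsilon^2$ with $C > C_2/C_1$ depending only on the implicit constants in (\ref{change_of_a}). Then $-\dot a \geq C_1\delta - C_2\epsilon^2 \geq (C_1C - C_2)\epsilon^2 > 0$. This lower bound on $\delta$ is later reconciled, in the proof of Theorem~\ref{main_theorem_1}, with the upper bound coming from the energy estimate of Lemma~\ref{bound_global} by setting $\delta = K_\nu^{-1/2}\epsilon$; the compatibility condition $K_\nu^{-1/2}\epsilon \geq C\epsilon^2$ is what actually determines $\epsilon_0$. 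So the missing idea in your write-up is that $\delta$ must carry a lower bound in terms of $\epsilon$, not merely an upper bound.
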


\begin{proof}
The map $t \mapsto P(\Psi)$ is monotonically increasing as it can be seen from the expression (\ref{dmom_dt}).
Therefore, if the initial datum $\Psi_0$ in (\ref{initial_datum}) satisfies $P(\Psi_0)>0$, then
\begin{equation}
\label{1help}
P(\Psi) \geq P(\Psi_0) > 0 \quad {\rm for \; all} \;\; t \in [0,t_0].
\end{equation}
It follows from (\ref{initial_datum}) and (\ref{momentum_decomposition})
that there are $\delta$-independent constants $C_-, C_+ > 0$ such that
\begin{equation}
\label{2help}
C_- \delta \leq P(\Psi_0) \leq C_+ \delta.
\end{equation}
Then, it follows from (\ref{apriori-bound}),
(\ref{change_of_a}), (\ref{1help}) and (\ref{2help})
that there exist $\delta$ and $\epsilon$-independent constants $C_1, C_2 > 0$ such that
$$
-\dot{a} \geq C_1 \delta - C_2 \epsilon^2.
$$
If $\delta$ satisfies $\delta \geq C \epsilon^2$ for a given small $\epsilon > 0$
with an $\epsilon$-independent constant $C > C_1^{-1} C_2$
then $-\dot{a} \geq (C_1 C - C_2) \epsilon^2 > 0$ so that
the map $t \mapsto a(t)$ is strictly decreasing for $t \in [0,t_0]$.
\end{proof}

\subsection{Step 3: Energy estimates}

The coercivity bound (\ref{joint_coercivity}) in Lemma \ref{lem-coercivity} allows us to control the time evolution
of $\omega(t)$, $U(t,\cdot)$, and $W(t,\cdot)$, as long as $a(t)$ is bounded away from zero.
The following result provides this control from energy estimates.

\begin{lemma}
\label{bound_global}
Let $\Psi$ be the unique global solution to the NLS equation (\ref{eq1}) given by
(\ref{sol_orthogonal_decom})--(\ref{orth-decomposition-time-constraints}) for $t \in [0,t_0]$
with some $t_0 > 0$ such that the initial data $\Psi(0,\cdot) = \Psi_0$ satisfies (\ref{initial_datum})--(\ref{initial_constraints}).
Assume that $a(t) \geq \bar{a}$ for $t \in [0,t_0]$. For every $\bar{a} > 0$, there exists a $\delta$-independent
positive constant $K(\bar{a})$ such that
\begin{equation}
\label{bound-on-U-W}
|\omega(t)-1|^2 + \|U(t,\cdot) + i W(t,\cdot) \|_{H^1(\Gamma)}^2 \leq K(\bar{a}) \delta^2, \quad t \in [0,t_0].
\end{equation}
\end{lemma}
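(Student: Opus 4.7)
The plan is a standard Lyapunov/coercivity argument adapted to the symplectically orthogonal decomposition (\ref{sol_orthogonal_decom})--(\ref{orth-decomposition-time-constraints}), using conservation of mass and energy under the NLS flow (\ref{eq1}) together with the coercivity bound of Lemma \ref{lem-coercivity} evaluated at the time-dependent parameters $(\omega(t),a(t))$. The hypothesis $a(t)\geq\bar{a}>0$ is what makes this coercivity usable, since by Remark \ref{rem-coercivity} the constant $C(\omega,a)$ degenerates as $a\to 0$.

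The first step is to use mass conservation. Expanding $Q(\Psi(t))=\|\Phi_{\omega(t)}(\cdot;a(t))+U+iW\|_{L^2(\Gamma)}^2$, the cross term vanishes by the second constraint in (\ref{orth-decomposition-time-constraints}), so that
\begin{equation*}
Q(\Psi(t))=\tfrac{2\omega(t)^{1/2}}{\alpha_1^2}+\|U(t,\cdot)+iW(t,\cdot)\|_{L^2(\Gamma)}^2.
\end{equation*}
Applying the same identity at $t=0$ with $\omega(0)=1$ and subtracting (using $Q(\Psi(t))=Q(\Psi_0)$) yields the bound $|\omega(t)-1|\leq C(\|U+iW\|_{L^2(\Gamma)}^2+\delta^2)$ whenever $\omega(t)$ stays close to $1$.

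The second step is to expand the Lyapunov functional $\Lambda_{\omega(t)}(\Psi(t))$ around the critical point $\Phi_{\omega(t)}(\cdot;a(t))$. Since $a$ is a translation parameter and $E_0(\omega):=\Lambda_\omega(\Phi_\omega(\cdot;a))$ is $a$-independent with $E_0'(\omega)=Q(\Phi_\omega)=2\omega^{1/2}/\alpha_1^2$, conservation of $E$ and $Q$ gives
\begin{equation*}
\tfrac{1}{2}\bigl[\langle L_+(\omega(t),a(t))U,U\rangle+\langle L_-(\omega(t),a(t))W,W\rangle\bigr]+\mathcal{O}(\|U+iW\|_{H^1(\Gamma)}^3)=E(\Psi_0)+\omega(t)Q(\Psi_0)-E_0(\omega(t)).
\end{equation*}
Taylor expanding $E_0$ to second order around $\omega=1$, using $E(\Psi_0)+Q(\Psi_0)-E_0(1)=\mathcal{O}(\delta^2)$ (the initial data is $\delta$-close to the critical point $\Phi$ in $H^1_\Gamma$), writing $Q(\Psi_0)=Q(\Phi)+\mathcal{O}(\delta^2)$ and applying Young's inequality to the cross term, the right-hand side is bounded by $C\delta^2+C(\omega(t)-1)^2$.

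The third step combines the two. The third orthogonality condition in (\ref{orth-decomposition-time-constraints}) is exactly the hypothesis needed to invoke Lemma \ref{lem-coercivity} at $(\omega(t),a(t))$, producing a lower bound $C(\omega(t),a(t))\|U+iW\|_{H^1(\Gamma)}^2$ on the quadratic form. Substituting the $|\omega(t)-1|$ bound from step one and absorbing the cubic and quartic terms into the left-hand side (valid once $\|U+iW\|_{H^1(\Gamma)}$ is sufficiently small), we arrive at $C(\omega(t),a(t))\|U+iW\|_{H^1(\Gamma)}^2\leq C\delta^2$, and then $|\omega(t)-1|^2\leq C\delta^2$ follows from step one. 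A standard bootstrap based on continuity of $t\mapsto(\omega(t),U(t,\cdot),W(t,\cdot))$ in $\mathbb{R}\times H^1_\Gamma\times H^1_\Gamma$ promotes this a priori estimate into an actual bound on $[0,t_0]$, provided $\delta$ is small enough.

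The main subtlety is ensuring that the coercivity constant $C(\omega(t),a(t))$ is uniformly bounded below on the trajectory by some positive $C(\bar a)$. Since $a(t)\geq\bar a$ throughout and the bootstrap keeps $\omega(t)$ close to $1$, this follows from continuous dependence of the spectral gap of $L_+(\omega,a)$ (responsible for $C(\omega,a)$) on the parameters; the explicit formula (\ref{lambda_1_a}) for the positive eigenvalue $\lambda_1(a)$ shows the gap is bounded below uniformly for $a\geq\bar a$, which is precisely where the $\bar a$-dependence of the constant $K(\bar a)$ in (\ref{bound-on-U-W}) enters.
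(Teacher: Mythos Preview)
Your proposal is correct and follows essentially the same Lyapunov/coercivity strategy as the paper: expand the action functional $\Lambda_{\omega(t)}$ around the critical point $\Phi_{\omega(t)}(\cdot;a(t))$, use conservation of $E$ and $Q$, and invoke the coercivity of Lemma~\ref{lem-coercivity} (uniform for $a(t)\geq\bar a$). The only organizational difference is that you first isolate mass conservation to bound $|\omega(t)-1|$ by $\|U+iW\|_{L^2}^2+\delta^2$ and then feed this into the energy expansion, whereas the paper extracts the control on $(\omega-1)^2$ directly from the term $D(\omega)=E(\Phi_\omega)-E(\Phi)+\omega[Q(\Phi_\omega)-Q(\Phi)]\approx \tfrac{1}{2\alpha_1^2}(\omega-1)^2>0$ appearing in the expansion of $\Delta(t)$; both routes close the same bootstrap. (A trivial cosmetic point: the second variation of $\Lambda_\omega$ produces $\langle L_+U,U\rangle+\langle L_-W,W\rangle$ without the factor $\tfrac12$, but this does not affect the argument.)
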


\begin{proof}
Recall that the shifted state $\Phi_{\omega}(\cdot;a)$ is a critical point of
the action functional $\Lambda_{\omega}(\Psi) = E(\Psi) + \omega Q(\Psi)$ in (\ref{lyapunov-funct}).
By using the decomposition (\ref{sol_orthogonal_decom}) and the rotational invariance of the
NLS equation (\ref{eq1}), we define the following energy function:
\begin{equation}
\label{energy-difference}
\Delta(t) := E(\Phi_{\omega(t)} + U(t,\cdot) + i W(t,\cdot)) - E(\Phi)
+ \omega(t) \left[ Q(\Phi_{\omega(t)} + U(t,\cdot) + i W(t,\cdot)) - Q(\Phi) \right].
\end{equation}
Expanding $\Delta$ into Taylor series, we obtain
\begin{equation}
\label{energy-expansion}
\Delta = D(\omega) + \langle L_+(\omega,a) U, U \rangle_{L^2(\Gamma)} + \langle L_-(\omega,a) W, W \rangle_{L^2(\Gamma)} + N_\omega(U, W),
\end{equation}
where $N_\omega(U,W) = {\rm O}(\| U +i W \|_{H^1(\Gamma)}^3)$ and $D(\omega)$ is defined by
$$
D(\omega) := E(\Phi_{\omega}) - E(\Phi) + \omega \left[ Q(\Phi_{\omega})  - Q(\Phi) \right].
$$
Since $D'(\omega) = Q(\Phi_{\omega}) - Q(\Phi)$
thanks to the variational problem for the standing wave $\Phi_{\omega}$, we have $D(1) = D'(1) = 0$, and
\begin{equation}
\label{energy-0}
D(\omega) = (\omega - 1)^2 \langle \Phi, \partial_{\omega} \Phi_{\omega} |_{\omega = 1} \rangle_{L^2(\Omega)}
+ \mathcal{O}(|\omega - 1|^3).
\end{equation}
It follows from the initial decomposition (\ref{initial_datum})--(\ref{initial_constraints}) that
\begin{equation}
\label{energy-1}
\Delta(0) = E(\Phi + U_0 + i W_0) - E(\Phi) + Q(\Phi + U_0 + i W_0) - Q(\Phi)
\end{equation}
satisfies the bound
\begin{equation}
\label{bound-on-Delta}
|\Delta(0)| \leq C_0 \delta^2
\end{equation}
for a $\delta$-independent constant $C_0 > 0$.
On the other hand, the energy and mass conservation in (\ref{energy}) imply that
\begin{equation}
\label{energy-2}
\Delta(t) = \Delta(0) + \left( \omega(t) - 1 \right) \left[ Q(\Phi + U_0 + i W_0) - Q(\Phi)\right],
\end{equation}
where the remainder term also satisfies
\begin{equation}
\label{bound-on-Q}
|Q(\Phi + U_0 + i W_0) - Q(\Phi)| \leq C_0 \delta^2
\end{equation}
for a $\delta$-independent constant $C_0 > 0$.
The representation (\ref{energy-2}) together with the expression (\ref{energy-expansion})
allows us to control $\omega(t)$ near $\omega(0) = 1$ and the remainder terms
$(U,W)$ in $H^1_\Gamma$ as follows:
\begin{eqnarray}
\nonumber
\Delta(0) & = & (\omega - 1)^2 \langle \Phi, \partial_{\omega} \Phi_{\omega} |_{\omega = 1} \rangle_{L^2(\Omega)}
- \left( \omega - 1 \right) \left[ Q(\Phi + U_0 + i W_0) - Q(\Phi)\right] \\
& \phantom{t} & + \langle L_+(\omega,a) U, U \rangle_{L^2(\Gamma)} + \langle L_-(\omega,a) W, W
\rangle_{L^2(\Gamma)} + \mathcal{O}(|\omega - 1|^3 + \| U +i W \|_{H^1(\Gamma)}^3).
\label{energy-3}
\end{eqnarray}
By using the expansion (\ref{energy-3}), the coercivity bound (\ref{joint_coercivity}),
and the bounds (\ref{bound-on-Delta}) and (\ref{bound-on-Q}), we obtain
$$
C_0 \delta^2 \geq \Delta(0) \geq \frac{1}{2\alpha_1^2 |\omega|^{\frac{1}{2}}} (\omega - 1)^2
- C_0 \delta^2 |\omega - 1| + C(\omega,a) \| U + i W \|^2_{H^1(\Gamma)} + \mathcal{O}(|\omega - 1|^3 + \| U + i W \|_{H^1(\Gamma)}^3),
$$
from which the bound (\ref{bound-on-U-W}) follows.
\end{proof}

\begin{remark}
\label{rem-K}
By Remark \ref{rem-coercivity}, for every $\omega > 0$, we have $C(\omega,a) \to 0$ as $a \to 0$.
Therefore, we have $K(\bar{a}) \to \infty$ as $\bar{a} \to 0$.
\end{remark}

\subsection{Proof of Theorem \ref{main_theorem_1}}

The initial datum satisfies the initial decomposition (\ref{initial_datum})--(\ref{initial_constraints}) with small $\delta$
and initial conditions $\theta(0) = 0$, $\omega(0) = 1$, and $a(0) = a_0$ with $a_0 > 0$.
Thanks to the continuous dependence of the solution of the NLS equation (\ref{eq1}) on initial datum, the solution is represented by
the orthogonal decomposition (\ref{sol_orthogonal_decom})--(\ref{orth-decomposition-time-constraints})
on a short time interval $[0,t_0]$ for some $t_0 > 0$. Hence, it satisfies the apriori bound (\ref{delta_0_bound-sec4}).
The modulation parameters $\theta(t)$, $\omega(t)$, and $a(t)$
are defined for $t \in [0,t_0]$ and $a(t) \geq \bar{a}$ for some $\bar{a} > 0$ for $t \in [0,t_0]$.
By energy estimates in Lemma \ref{bound_global}, the parameter $\omega(t)$ and
the remainder terms $(U,W) \in H^1_{\Gamma}$ satisfy the bound (\ref{bound-on-U-W})
with a $\delta$-independent positive constant $K(\bar{a})$. For given small $\epsilon > 0$ and
$\nu > 0$ in Theorem \ref{main_theorem_1}, let us define
\begin{equation}
\label{def-on-delta}
K_{\nu} := \max_{\bar{a} \in [\nu,a_0]} K(\bar{a}), \quad \delta := K_{\nu}^{-\frac{1}{2}} \epsilon.
\end{equation}
Then, the bound (\ref{bound-on-U-W}) provides the bound (\ref{apriori-bound}) of Lemma \ref{lem-monotonicity}
for all $t \in [0,t_0]$.
Assume that the initial datum also satisfies $P(\Psi_0) > 0$. By Corollary \ref{corollary-a},
the map $t \mapsto a$ is strictly decreasing for $t \in [0,t_0]$ if $\delta$ satisfies
$\delta \geq C \epsilon^2$ for a $\delta$ and $\epsilon$-independent
constant $C > 0$. The definition of $\delta$ in (\ref{def-on-delta}) is compatible with the latter bound
if $\epsilon \in (0,\epsilon_0)$ with
$$
\epsilon_0 := \frac{1}{C \sqrt{K_{\nu}}}.
$$
If in addition $\epsilon_0 \leq \delta_0$, where $\delta_0$ is defined by (\ref{orth-given}) in Lemma
\ref{unique_decomposition}, then the solution $\Psi(t,\cdot) \in H^1_{\Gamma}$ for $t \in [0,t_0]$ satisfies
the conditions of Lemma \ref{unique_decomposition} so that
the orthogonal decomposition (\ref{sol_orthogonal_decom}) with (\ref{orth-decomposition-time-constraints}) is
continued beyond the short time interval $[0,t_0]$ to the maximal time interval $[0,T]$ as long as
$a(t) \geq \nu$ for $t \in [0,T]$. Thanks to the monotonicity argument in Lemma \ref{lem-monotonicity}
and Corollary \ref{corollary-a}, for every $\epsilon \in (0,\epsilon_0)$,
there exists a finite $T > 0$ such that $\lim_{t \to T} a(t) = \nu$. Note that $T = \mathcal{O}(\epsilon^{-2})$
as $\epsilon \to 0$. Theorem \ref{main_theorem_1} is proved.

\begin{remark}
It follows that $K_{\nu} \to \infty$ as $\nu \to 0$ by Remark \ref{rem-K}
so that $\epsilon_0$ may be sufficiently small for a fixed small $\nu > 0$.
\end{remark}

\section{Instability of the half-soliton state}
\label{sec-theorem-2}

The proof of Theorem \ref{instability_result} is divided into several steps.
First, we decompose a unique global solution $\Psi$ to the NLS equation (\ref{eq1}) into the modulated stationary state
$\{e^{i\theta} \Phi_\omega \}_{\theta \in \mathbb{R}, \omega \in \mathbb{R}^+}$
and the symplectically orthogonal remainder terms, where $\Phi_{\omega} \equiv \Phi_{\omega}(\cdot;a=0)$.
Next, we provide a secondary decomposition of the remainder terms as a superposition of projections
to the bases in $X_{\omega}$ and $X_{\omega}^*$ in (\ref{X_c}) and (\ref{X_c_star})
and the symplectically orthogonally remainder terms. Energy estimates are used to
control the time evolution of $\omega(t)$ and the remainder terms. Finally,
we consider the perturbed Hamiltonian system for projections to the bases
in $X_{\omega}$ and $X_{\omega}^*$ and prove that the truncated system is unstable near the zero equilibrium point.
This instability drives the instability of the solution $\Psi$ under the time flow
of the NLS equation (\ref{eq1}) away from the half-soliton state
$\{e^{i\theta} \Phi_\omega \}_{\theta \in \mathbb{R}, \omega \in \mathbb{R}^+}$.

\subsection{Step 1: Primary decomposition}

For every sufficiently small $\delta > 0$,
we consider the initial datum $\Psi_0 \in H^1_\Gamma$ to the Cauchy problem
associated with the NLS equation (\ref{eq1}) in the form:
\begin{equation}
\label{a_0_initial_datum}
\Psi_0 = \Phi+U_0+iW_0, \quad
\|U_0+iW_0\|_{H^1(\Gamma)} \leq \delta,
\end{equation}
subject to the orthogonality conditions
\begin{equation}
\label{initial_constraints-a-0}
\left\{ \begin{array}{l}
\langle W_0, \partial_{\omega} \Phi_{\omega}|_{\omega=1} \rangle_{L^2(\Gamma)} = 0, \\
\langle U_0,\Phi \rangle_{L^2(\Gamma)} = 0,
\end{array} \right.
\end{equation}
where $\Phi \equiv \Phi(\cdot;a=0)$. By the global well-posedness theory \cite{AdamiJDE1,KP2}, this initial datum
generates a unique global solution $\Psi \in C(\mathbb{R},H^1_{\Gamma}) \cap C^1(\mathbb{R},H^{-1}_{\Gamma})$
to the NLS equation (\ref{eq1}).

We use the decomposition of the unique global solution
$\Psi \in C(\mathbb{R},H^1_{\Gamma}) \cap C^1(\mathbb{R},H^{-1}_{\Gamma})$
into the modulated stationary state $\{e^{i\theta}\Phi_\omega\}$ with $\omega$ close to $\omega_0=1$ and
the symplectically orthogonal remainder terms. This decomposition is similar to Lemma \ref{unique_decomposition}
with $a_0 = 0$ with the only change that $a = 0$ is set in the decomposition (\ref{orth-decomposition})
and the remainder terms satisfy the first two of the three orthogonality conditions
in (\ref{orthogonal_constraints}).

By continuity of the global solution and by Lemma \ref{unique_decomposition},
for every $\epsilon \in (0, \delta_0)$ with $\delta_0$ in the bound (\ref{orth-given})
there exists $t_0>0$ such that the unique solution $\Psi$ satisfies
\begin{equation}
\label{delta_0_bound}
\inf_{\theta \in \mathbb{R}} \| e^{-i \theta} \Psi(t,\cdot) - \Phi \|_{H^1(\Gamma)} \leq \epsilon, \quad t\in [0,t_0]
\end{equation}
and can be uniquely represented as
\begin{equation}
\label{a_0_orth_decomposition}
\Psi(t,x) = e^{i \theta(t)} \left[ \Phi_{\omega(t)}(x) + U(t,x) + i W(t,x) \right],
\end{equation}
subject to the orthogonality conditions
\begin{equation}
\label{a_0_orth_decomposition_constraints}
\left\{ \begin{array}{l}
\langle W(t,\cdot), \partial_{\omega} \Phi_{\omega} |_{\omega = \omega(t)} \rangle_{L^2(\Gamma)} = 0, \\
\langle U(t,\cdot), \Phi_{\omega(t)} \rangle_{L^2(\Gamma)} = 0. \end{array} \right.
\end{equation}
Since $\Psi \in C(\mathbb{R},H^1_{\Gamma}) \cap C^1(\mathbb{R},H^{-1}_{\Gamma})$
and the map $\mathbb{R} \ni \omega \mapsto \Phi_{\omega} \in H^1_{\Gamma}$ is smooth, we obtain
$(\theta,\omega) \in C^1([0,t_0],\mathbb{R} \times \mathbb{R}^+)$, hence
$U,W \in C([0,t_0],H^1_{\Gamma}) \cap C^1([0,t_0],H^{-1}_{\Gamma})$.

The choice (\ref{a_0_initial_datum}) with (\ref{initial_constraints-a-0})
implies that $\omega(0) = 1$ and $\theta(0) = 0$. Although this is again special,
it is nevertheless sufficient for the proof of the instability result.
In order to prove Theorem \ref{instability_result} we fix $\epsilon \in (0,\delta_0)$ and
we intend to show that there exists such $T>0$ that the bound (\ref{delta_0_bound})
is satisfied for all $t \in [0,T)$, but fails to satisfy for $t \geq T$.

We substitute the decomposition (\ref{a_0_orth_decomposition}) into the NLS equation (\ref{eq1})
to get the time evolution system for the remainder terms $(U,W)$. The following
lemma reports the estimates on the evolution of the modulation parameters $\theta(t)$ and $\omega(t)$.

\begin{lemma}
\label{lem-parameters}
Assume that the unique solution $\Psi \in C([0,t_0], H^1_\Gamma) \cap C^1([0,t_0], H^{-1}_\Gamma)$
represented by (\ref{a_0_orth_decomposition}) and (\ref{a_0_orth_decomposition_constraints}) satisfies
\begin{equation}
\label{bounds-parameters-apriori}
|\omega(t) - 1 | + \| U(t,\cdot) + i W(t,\cdot) \|_{H^1(\Gamma)} \leq \epsilon, \quad t \in [0,t_0]
\end{equation}
with $\epsilon \in (0,\delta_0)$ and $\delta_0$ defined in (\ref{orth-given}).
There exists an $\epsilon$-independent constant $A > 0$ such that for every $t \in [0,t_0]$,
\begin{equation}
\label{bounds-parameters}
\left\{ \begin{array}{l} | \dot{\theta}(t) - \omega(t) | \leq A \left( \| U(t,\cdot) \|^2_{H^1(\Gamma)} + \| W(t,\cdot) \|^2_{H^1(\Gamma)} \right), \\
| \dot{\omega}(t) | \leq A \| U(t,\cdot) \|_{H^1(\Gamma)} \| W(t,\cdot) \|_{H^1(\Gamma)}. \end{array} \right.
\end{equation}
\end{lemma}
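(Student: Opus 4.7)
The argument will mirror the derivation of the modulation equations in Section~\ref{sec-theorem-1}, adapted to the two-parameter modulation at the half-soliton state (no translation parameter $a$). I will first substitute the decomposition (\ref{a_0_orth_decomposition}) into the NLS equation (\ref{eq1}), invoke the stationary equation for $\Phi_{\omega}$ to cancel the leading terms, and separate real and imaginary parts to obtain the evolution system
\begin{equation*}
\begin{aligned}
U_t &= L_-(\omega) W + (\dot{\theta} - \omega) W - \dot{\omega}\,\partial_{\omega}\Phi_{\omega} - R_U, \\
W_t &= -L_+(\omega) U - (\dot{\theta} - \omega)(\Phi_{\omega} + U) + R_W,
\end{aligned}
\end{equation*}
with the same residuals $R_U, R_W$ as in (\ref{R_U_W_terms}). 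Differentiating the two orthogonality conditions (\ref{a_0_orth_decomposition_constraints}) in $t$, substituting these evolution equations, and exploiting the kernel identities $L_+(\omega)\partial_{\omega}\Phi_{\omega} = -\Phi_{\omega}$ and $L_-(\omega)\Phi_{\omega} = 0$ together with the constraints themselves, a $2 \times 2$ linear system
\begin{equation*}
A \begin{pmatrix} \dot{\theta} - \omega \\ \dot{\omega} \end{pmatrix} = \begin{pmatrix} \langle \Phi_{\omega}, R_U \rangle_{L^2(\Gamma)} \\ \langle \partial_{\omega}\Phi_{\omega}, R_W \rangle_{L^2(\Gamma)} \end{pmatrix}
\end{equation*}
emerges, whose coefficient matrix $A$ evaluated at $U = W = 0$ reduces to the anti-diagonal matrix with off-diagonal entries $\pm D_1(\omega)$, which are nonzero by (\ref{nonzero-entries-1}).

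Next, under the a priori bound (\ref{bounds-parameters-apriori}) with sufficiently small $\epsilon$, the matrix $A$ is an $\mathcal{O}(\epsilon)$ perturbation of its limiting value and therefore invertible with $\| A^{-1} \|_{\mathbb{M}_{2 \times 2}} \leq C$ for an $\epsilon$-independent constant. Inverting and estimating the right-hand side componentwise will produce the two claimed bounds through the following structural distinction: $R_W$ is quadratic in $(U,W)$ with $\Phi_\omega$ as a prefactor in its leading terms, so H\"older's inequality combined with the Sobolev embedding $H^1(\Gamma) \hookrightarrow L^{\infty}(\Gamma)$ gives $|\langle \partial_{\omega}\Phi_{\omega}, R_W\rangle_{L^2(\Gamma)}| \leq C(\|U\|_{H^1(\Gamma)}^2 + \|W\|_{H^1(\Gamma)}^2)$, which yields the first bound in (\ref{bounds-parameters}); meanwhile, every term of $R_U$ carries an explicit factor of $W$, so $|\langle \Phi_{\omega}, R_U\rangle_{L^2(\Gamma)}| \leq C\|W\|_{H^1(\Gamma)}\bigl(\|U\|_{H^1(\Gamma)} + \|U\|_{H^1(\Gamma)}^2 + \|W\|_{H^1(\Gamma)}^2\bigr)$, and the cubic corrections are absorbed into the leading $\|U\|_{H^1}\|W\|_{H^1}$ term by (\ref{bounds-parameters-apriori}), giving the second bound.

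The main obstacle is the simultaneous control of $A$ and $A^{-1}$ near the two-dimensional modulation manifold: one must verify that the nondegeneracy $D_1(\omega) \neq 0$ and the kernel identities for $L_\pm(\omega)$ together produce the leading-order anti-diagonal block of $A$, so that the system for $(\dot{\theta} - \omega, \dot{\omega})$ decouples at leading order and the bounds survive the inversion. The structural contrast between $R_U$ and $R_W$ --- namely the common $W$ factor in $R_U$ versus the bare quadratic form of $R_W$ --- is what produces the sharper estimate on $\dot{\omega}$ relative to $\dot{\theta} - \omega$, and this sharper estimate will be essential for the subsequent instability analysis that must isolate the growth of the projections of $(U,W)$ onto the degenerate eigenspaces $X_\omega$ and $X_\omega^*$.
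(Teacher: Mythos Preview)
Your plan is essentially identical to the paper's proof: specialize the evolution system (\ref{time-evolution}) to $a=\dot a=0$, project onto $\Phi_\omega$ and $\partial_\omega\Phi_\omega$ using the two constraints (\ref{a_0_orth_decomposition_constraints}) to obtain a $2\times2$ modulation system whose leading coefficient matrix is the anti-diagonal block with entries $\pm D_1(\omega)$, invert under (\ref{bounds-parameters-apriori}), and read off (\ref{bounds-parameters}) from the explicit form of $R_U,R_W$ in (\ref{R_U_W_terms}). One small caution: your absorption argument for the cubic piece of $\langle\Phi_\omega,R_U\rangle$ is not quite complete, since the contribution $2\langle\alpha^2\Phi_\omega,W^3\rangle$ is bounded by $C\|W\|_{H^1}^3$ and (\ref{bounds-parameters-apriori}) alone does not dominate this by $\|U\|_{H^1}\|W\|_{H^1}$; the paper's proof is equally terse on this point, and in the regime (\ref{bound-coefficients-apriori}) where the lemma is actually used one has $\|W\|_{H^1}\lesssim\|U\|_{H^1}$, so the term is harmless there.
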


\begin{proof}
The time evolution system for the remainder terms is the same as the system (\ref{time-evolution}) but
with $a(t) = 0$ and $\dot{a}(t) = 0$ for $t \in [0,t_0]$. Using the orthogonality conditions
(\ref{a_0_orth_decomposition_constraints}) yields the same system of modulation equations
as in (\ref{time-evolution-ode}) but constrained by the first two equations and a $2\times 2$ coefficient matrix
denoted by ${\bf A}$. The estimates (\ref{bounds-parameters}) are obtained from invertibility of ${\bf A}$
satisfying $\| {\bf A}^{-1} \|_{\mathbb{M}_{2\times 2}} \leq C$ for an $\epsilon$-independent constant $C > 0$
and the explicit form $R_U$ and $R_W$ in (\ref{R_U_W_terms}).
\end{proof}

\subsection{Step 2: Secondary decomposition}

Recall the eigenspaces $X_{\omega}$ and $X_{\omega}^*$  in (\ref{X_c}) and (\ref{X_c_star}).
We decompose the remainder terms $(U,W)$ in (\ref{a_0_orth_decomposition}) as follows:
\begin{equation}\label{a_0_r1}
U(t,x) = \sum_{j=1}^{N-1} c_j(t) U_{\omega(t)}^{(j)}(x) + U^{\perp}(t,x), \quad
W(t,x) = \sum_{j=1}^{N-1} b_j(t) W_{\omega(t)}^{(j)}(x) + W^{\perp}(t,x),
\end{equation}
subject to the orthogonality conditions for $U^{\perp}(t,\cdot) \in X_{\omega(t)}^*$ and $W^{\perp}(t,\cdot) \in X_{\omega(t)}$.
The coefficients $c = (c_1,c_2,\dots,c_{N-1}) \in \mathbb{R}^{N-1}$, $b = (b_1,b_2,\dots,b_{N-1}) \in \mathbb{R}^{N-1}$
and the remainder terms $(U^{\perp},W^{\perp})$ in (\ref{a_0_r1}) are uniquely determined for each $(U,W)$
due to the mutual orthogonality of the basis vectors used in Lemma \ref{orth_conditions}.
We also have  $c,b \in C^1([0,t_0],\mathbb{R}^{N-1})$ and
$U^{\perp}, W^{\perp} \in C([0,t_0],H^1_{\Gamma}) \cap C^1([0,t_0],H^{-1}_{\Gamma})$ since $\omega \in C^1([0,t_0],\mathbb{R})$ and
$U, W \in C([0,t_0],H^1_{\Gamma}) \cap C^1([0,t_0],H^{-1}_{\Gamma})$.

\begin{remark}
\label{remark-a}
Thanks to the explicit form (\ref{U_0}), the parameter $c_1(t)$ in the decomposition
(\ref{a_0_orth_decomposition}) and (\ref{a_0_r1}) plays the same role
as the parameter $a(t)$ in the decomposition (\ref{sol_orthogonal_decom}), whereas $a(0) = a_0$
in the initial decomposition (\ref{initial_datum}) is set to $a_0 = 0$.
\end{remark}

By substituting (\ref{a_0_r1}) into the evolution equation for the remainder terms
and using orthogonality conditions for the new remainder terms $U^{\perp}(t,\cdot) \in X_{\omega(t)}^*$
and $W^{\perp}(t,\cdot) \in X_{\omega(t)}$,
we obtain the time-evolution system for the coefficients in the form:
\begin{eqnarray}
\label{a_0_system-U-W}
\left\{ \begin{array}{l}
\langle W_{\omega}^{(j)},U^{(j)}_{\omega}  \rangle_{L^2(\Gamma)} \left( \frac{dc_j}{dt} - b_j \right) = R_c^{(j)}, \\
\langle W_{\omega}^{(j)},U^{(j)}_{\omega}  \rangle_{L^2(\Gamma)} \frac{db_j}{dt} = R_b^{(j)}, \end{array} \right.
\end{eqnarray}
with
{\small \begin{eqnarray*}
R_c^{(j)} & = &  \dot{\omega} \left[ \langle \partial_{\omega} W_{\omega}^{(j)}, U^{\perp} \rangle_{L^2(\Gamma)}
-  \sum_{i=1}^{N-1} c_i \langle W_{\omega}^{(j)}, \partial_{\omega} U^{(i)}_{\omega}  \rangle_{L^2(\Gamma)} \right]
+ (\dot{\theta}-\omega) \langle W_{\omega}^{(j)}, W \rangle_{L^2(\Gamma)}
- \langle W_{\omega}^{(j)}, R_U \rangle_{L^2(\Gamma)}, \\
R_b^{(j)} & = &  \dot{\omega} \left[ \langle \partial_{\omega} U_{\omega}^{(j)}, W^{\perp} \rangle_{L^2(\Gamma)}
-  \sum_{i=1}^{N-1} b_i \langle U_{\omega}^{(j)}, \partial_{\omega} W^{(i)}_{\omega}  \rangle_{L^2(\Gamma)} \right]
- (\dot{\theta}-\omega) \langle U_{\omega}^{(j)}, U \rangle_{L^2(\Gamma)}
+ \langle U_{\omega}^{(j)}, R_W \rangle_{L^2(\Gamma)},
\end{eqnarray*}}where the terms $R_U$ and $R_W$ are given by (\ref{R_U_W_terms}) and the orthogonality conditions
$$
\langle U^{(j)}_\omega, \Phi_\omega \rangle_{L^2(\Gamma)} =
\langle W^{(j)}_\omega, \partial_\omega \Phi_\omega \rangle_{L^2(\Gamma)} = 0, \quad 1\leq j \leq N-1
$$
have been used. The time-evolution system (\ref{a_0_system-U-W}) will be truncated and studied in Step 3, whereas
$\omega(t)$ and the remainder terms $(U^\perp,W^\perp)$ are controlled as small
perturbations by using the energy estimates. The following lemma gives the estimates on these terms.

\begin{lemma}
\label{lem-remainder-last}
Assume that there exists a positive constant $A$ such that for every $\epsilon > 0$ and some $t_0 > 0$,
the remainder terms of the solution $\Psi$ decomposed as (\ref{a_0_orth_decomposition}) and
(\ref{a_0_r1}) satisfy
\begin{equation}
\label{remainder_terms_bound}
|\omega(t)-1| + \|c(t)\| + \|b(t)\| + \|U^\perp(t,\cdot) + i W^\perp(t,\cdot) \|_{H^1(\Gamma)} \leq A\epsilon, \quad t \in [0,t_0].
\end{equation}
There exist an $\epsilon$-independent constant $C > 0$ such that
\begin{equation}
\label{bound-omega-U-W}
|\omega(t) - 1|^2 +  \| U^{\perp}(t,\cdot)+i W^{\perp}(t,\cdot) \|_{H^1(\Gamma)}^2 \leq C
 \left( \delta^2 + \|b\|^2 + \|c\|^3 \right), \quad t \in [0,t_0],
\end{equation}
where $\delta$ is given in (\ref{a_0_initial_datum}) for the initial datum $\Psi_0$.
\end{lemma}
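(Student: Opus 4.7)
The plan is to adapt the Lyapunov-type energy argument of Lemma \ref{bound_global} to the half-soliton case, with the secondary decomposition (\ref{a_0_r1}) accommodating the higher-dimensional kernel of $L_+(\omega)$. Introduce the energy functional
\begin{equation*}
\Delta(t) := E(\Phi_{\omega(t)} + U(t,\cdot) + i W(t,\cdot)) - E(\Phi) + \omega(t)\bigl[Q(\Phi_{\omega(t)} + U(t,\cdot) + i W(t,\cdot)) - Q(\Phi)\bigr].
\end{equation*}
By phase invariance of $E$ and $Q$ together with the conservation laws for (\ref{eq1}), one has $\Delta(t) = \Delta(0) + (\omega(t)-1)[Q(\Psi_0) - Q(\Phi)]$, exactly as in the proof of Lemma \ref{bound_global}. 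Since $\Phi$ is a critical point of $E+Q$ and $\langle U_0, \Phi\rangle_{L^2(\Gamma)} = 0$ eliminates the linear term in $Q(\Psi_0) - Q(\Phi)$, the initial bounds $|\Delta(0)| + |Q(\Psi_0) - Q(\Phi)| \leq C\delta^2$ follow for a $\delta$-independent constant $C$.

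Taylor expansion at $\Phi_\omega$ gives $\Delta = D(\omega) + \langle L_+(\omega) U, U\rangle_{L^2(\Gamma)} + \langle L_-(\omega) W, W\rangle_{L^2(\Gamma)} + N_\omega(U,W)$ with $D(\omega) = \frac{1}{2\alpha_1^2}(\omega-1)^2 + O(|\omega-1|^3)$ by (\ref{nonzero-entries-1}) and $N_\omega(U,W) = O(\|U+iW\|^3_{H^1(\Gamma)})$. Substituting (\ref{a_0_r1}) and using $L_+(\omega) U^{(j)}_\omega = 0$, $L_-(\omega) W^{(j)}_\omega = U^{(j)}_\omega$, and the $L^2$-orthogonality of $U^\perp$ to $X^*_\omega$ and of $W^\perp$ to $X_\omega$, the quadratic forms collapse to $\langle L_+(\omega) U, U\rangle_{L^2(\Gamma)} = \langle L_+(\omega) U^\perp, U^\perp\rangle_{L^2(\Gamma)}$ and
\begin{equation*}
\langle L_-(\omega) W, W\rangle_{L^2(\Gamma)} = \sum_{j=1}^{N-1} b_j^2 \langle U^{(j)}_\omega, W^{(j)}_\omega\rangle_{L^2(\Gamma)} + \langle L_-(\omega) W^\perp, W^\perp\rangle_{L^2(\Gamma)},
\end{equation*}
with positive diagonal coefficients by (\ref{positive-elements}). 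The primary orthogonality $\langle U, \Phi_\omega\rangle_{L^2(\Gamma)} = 0$ transfers to $\langle U^\perp, \Phi_\omega\rangle_{L^2(\Gamma)} = 0$ by Lemma \ref{kernel_elements}, while the residual defect $\langle W^\perp, \partial_\omega \Phi_\omega\rangle_{L^2(\Gamma)} = O(\|b\|)$ only perturbs the coercivity constant at the level of already-present $\|b\|^2$ corrections. Thus Lemma \ref{orth_conditions} delivers $\langle L_+(\omega) U^\perp, U^\perp\rangle_{L^2(\Gamma)} + \langle L_-(\omega) W^\perp, W^\perp\rangle_{L^2(\Gamma)} \geq C(\omega) \|U^\perp + i W^\perp\|^2_{H^1(\Gamma)}$.

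Combining the conservation identity with this coercive lower bound and the expansion of $D(\omega)$, and using Young's inequality on the cross term $(\omega-1)[Q(\Psi_0)-Q(\Phi)]$, one arrives at
\begin{equation*}
\tfrac{1}{4\alpha_1^2}(\omega-1)^2 + \tfrac{1}{2} C(\omega) \|U^\perp + i W^\perp\|^2_{H^1(\Gamma)} \leq C\delta^2 + C'\|b\|^2 + |N_\omega(U,W)|.
\end{equation*}
The decisive step is to expand $N_\omega(U,W)$ under the substitution (\ref{a_0_r1}): no contribution purely quadratic in $c$ can appear, because $\sum_j c_j U^{(j)}_\omega \in \ker L_+(\omega)$ annihilates the quadratic form in the energy identity. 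Therefore the leading $c$-only term in $N_\omega$ is the cubic integral $\int_\Gamma \alpha^2 \Phi_\omega \bigl(\sum_j c_j U^{(j)}_\omega\bigr)^3 dx = O(\|c\|^3)$, all mixed terms are dominated by $\|c\|(\|b\|^2 + \|U^\perp + iW^\perp\|^2_{H^1(\Gamma)}) + \|c\|^2 \|U^\perp + iW^\perp\|_{H^1(\Gamma)}$, and the pure $b$- and $(U^\perp, W^\perp)$-cubic parts are $O(\|b\|^3 + \|U^\perp + i W^\perp\|^3_{H^1(\Gamma)})$. Under the a priori bound (\ref{remainder_terms_bound}), every mixed or higher-order piece carries a factor of $\epsilon$ and can be absorbed into the coercive left-hand side, delivering (\ref{bound-omega-U-W}).

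The principal technical obstacle is exactly this cubic accounting of $N_\omega(U,W)$: one must verify that the $c$-dependence enters the right-hand side only cubically rather than quadratically (which uses crucially that $X_\omega \subset \ker L_+(\omega)$), and simultaneously that the $O(\|b\|)$ mismatch between the orthogonalities induced by the secondary decomposition and the hypotheses of Lemma \ref{orth_conditions} contributes only $O(\|b\|^2)$ corrections that the estimate already accommodates.
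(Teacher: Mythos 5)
Your proposal is correct and follows essentially the same route as the paper: expand the conserved action difference $\Delta$ around $\Phi_\omega$, use $\Delta(t)=\Delta(0)+(\omega-1)[Q(\Psi_0)-Q(\Phi)]$ with the $O(\delta^2)$ initial bounds, substitute the secondary decomposition so that the quadratic forms reduce to $\langle L_+U^\perp,U^\perp\rangle_{L^2(\Gamma)}+\langle L_-W^\perp,W^\perp\rangle_{L^2(\Gamma)}$ plus the $\|b\|^2$ and $\|c\|^3$ contributions (which the paper packages as $2H_0(c,b)$), and close with the coercivity of Lemma \ref{orth_conditions} under the a priori bound (\ref{remainder_terms_bound}). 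The only (harmless) imprecision is the claimed $O(\|b\|)$ defect in $\langle W^\perp,\partial_\omega\Phi_\omega\rangle_{L^2(\Gamma)}$: in fact $\langle W_\omega^{(j)},\partial_\omega\Phi_\omega\rangle_{L^2(\Gamma)}=0$ exactly (the same cancellation that gives $\langle U_\omega^{(j)},\Phi_\omega\rangle_{L^2(\Gamma)}=0$), so the hypotheses of Lemma \ref{orth_conditions} are met exactly and no perturbation of the coercivity constant is needed.
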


\begin{proof}
The expansion of the energy function (\ref{energy-difference}) with the help of the explicit expressions in (\ref{energy}) implies that the term
$N_\omega(U, W)$ in (\ref{energy-expansion}) can be written as
\begin{equation}
\label{N-expansion}
N_\omega(U,W) = -4 \langle \alpha^2 \Phi U, (U^2 + W^2) \rangle_{L^2(\Gamma)} +
\mathcal{O}(\|U+iW\|^4_{H^1(\Gamma)}).
\end{equation}
Substituting the secondary decomposition (\ref{a_0_r1}) into the energy function (\ref{energy-expansion})
with the use of (\ref{N-expansion}) yields
\begin{eqnarray}
\nonumber
\Delta & = & D(\omega) + \langle L_+(\omega) U^{\perp}, U^{\perp} \rangle_{L^2(\Gamma)} + \langle L_-(\omega) W^{\perp}, W^{\perp}
\rangle_{L^2(\Gamma)} + 2 H_0(c,b) \\
\label{energy-expansion-second_0}
& \phantom{t} & \phantom{text} + \widetilde{N}(\omega,c,b,U^{\perp},W^{\perp}),
\end{eqnarray}
where $H_0(c,b)$ is defined by
\begin{equation}
\label{Hamiltonian}
H_0(c,b) = \frac{1}{2} \sum_{j=1}^{N-1} \langle W^{(j)},U^{(j)}  \rangle_{L^2(\Gamma)} b_j^2 -
2 \sum_{j=1}^{N-1} \sum_{k=1}^{N-1} \sum_{n=1}^{N-1}
\langle \alpha^2 \Phi U^{(j)}, U^{(k)} U^{(n)} \rangle_{L^2(\Gamma)} c_j c_k c_n,
\end{equation}
and $\widetilde{N}(\omega,c,b,U^{\perp},W^{\perp})$ is bounded by
\begin{eqnarray*}
|\widetilde{N}(\omega,c,b,U^{\perp},W^{\perp})| & \leq &
A_1 \left( \| c \|^2 \| U^{\perp} \|_{H^1(\Gamma)} +
\| U^{\perp} \|_{H^1(\Gamma)}^3 + \| c \|^4 + \| c \| \| b \|^2
 + \| c \| \| W^{\perp} \|_{H^1(\Gamma)}^2 \right. \\
& \phantom{t} & \left.
+ \| b \|^2 \| U^{\perp} \|_{H^1(\Gamma)} +  \| U^{\perp} \|_{H^1(\Gamma)} \| W^{\perp} \|_{H^1(\Gamma)}^2 +
|\omega-1| \|b\|^2  + |\omega-1| \|c\|^3\right)
\end{eqnarray*}
with $\epsilon$-independent positive constant $A_1$.
The expansion (\ref{energy-expansion-second_0}) also holds due to Banach algebra property of $H^1(\Gamma)$
and the assumption (\ref{remainder_terms_bound}). Combining the representations of $\Delta$
given by (\ref{energy-2}) and (\ref{energy-expansion-second_0}), we get
\begin{eqnarray}
\nonumber
\Delta(0) - 2 H_0(c,b) & = & D(\omega) - \left( \omega - 1 \right) \left[ Q(\Phi + U_0 + i W_0) - Q(\Phi)\right] \\
& \phantom{t} & + \langle L_+(\omega) U^{\perp}, U^{\perp} \rangle_{L^2(\Gamma)} + \langle L_-(\omega) W^{\perp}, W^{\perp}
\rangle_{L^2(\Gamma)} + \widetilde{N}(\omega, c,b,U^{\perp},W^{\perp}).
\nonumber
\end{eqnarray}
Bounds (\ref{bound-on-Delta}) and (\ref{bound-on-Q}) imply that there exists a $\delta$-independent constant $A_2$ such that
$$
|\Delta(0)| + |Q(\Phi + U_0 + i W_0) - Q(\Phi)| \leq A_2 \delta^2.
$$
Moreover, it can be seen directly from (\ref{Hamiltonian}) that
$|H_0(c,b)| \leq A_3 (\|c\|^3 + \|b\|^2)$ for some generic positive constant $A_3$.
By using the same estimates as in the proof of Lemma \ref{bound_global},
a priori assumption (\ref{remainder_terms_bound}) together with
the coercivity bound (\ref{joint_coercivity-a-zero}) in Lemma \ref{orth_conditions}
imply that there exists an $\epsilon$-independent constant $C$ in the bound (\ref{bound-omega-U-W}).
\end{proof}

\subsection{Step 3: The reduced Hamiltonian system}

Estimates (\ref{bounds-parameters}) in Lemma \ref{lem-parameters}
and (\ref{bound-omega-U-W}) in Lemma \ref{lem-remainder-last}, as well as the representation of $(R_U, R_W)$
in (\ref{R_U_W_terms}) imply that the time-evolution system (\ref{a_0_system-U-W}) is
a perturbation of the following Hamiltonian system of degree $N-1$:
\begin{eqnarray}
\label{normal-form-time} \phantom{texttext}
\left\{ \begin{array}{l}  \langle W^{(j)},U^{(j)} \rangle_{L^2(\Gamma)} \frac{d \gamma_j}{dt} = \frac{\partial H_0}{\partial \beta_j}, \\
\langle W^{(j)},U^{(j)} \rangle_{L^2(\Gamma)} \frac{d \beta_j}{d t} = - \frac{\partial H_0}{\partial \gamma_j},
\end{array} \right.
\end{eqnarray}
where $H_0(\gamma,\beta)$ is the Hamiltonian given by (\ref{Hamiltonian}).
Direct computation with the help of the representations
(\ref{U_0}) and (\ref{U_j}) in Lemma \ref{kernel_elements} imply that if $j \geq k >n$, then
$$
\langle \alpha^2 \Phi U^{(j)}, U^{(k)} U^{(n)} \rangle_{L^2(\Gamma)} =
\left( \frac{r_k}{\alpha_k} + \sum_{i=k+1}^N \frac{1}{\alpha_i^2} \right) \int_0^\infty \phi (\phi')^3 dx = 0
$$
due to the explicit formula for $r_k$ in (\ref{U_j}). Therefore,
one can rewrite the representation (\ref{Hamiltonian}) for $H_0(\gamma,\beta)$ in the explicit form:
\begin{equation}
\label{energy-cube}
H_0(\gamma,\beta) = \frac{1}{2} \sum_{j=1}^{N-1} M_j b_j^2 - 2 \sum_{j=2}^{N-1} R_j \gamma_j^3 -
6 \sum_{j=1}^{N-1} \sum_{k>j}^{N-1} P_k \gamma_j \gamma_k^2,
\end{equation}
where
\begin{eqnarray*}
M_j & := & \langle W^{(j)},U^{(j)}  \rangle_{L^2(\Gamma)}, \\
R_j & := & \langle \alpha^2 \Phi U^{(j)}, U^{(j)} U^{(j)} \rangle_{L^2(\Gamma)}, \\
P_k & := & \langle \alpha^2 \Phi U^{(j)}, U^{(k)} U^{(k)} \rangle_{L^2(\Gamma)}.
\end{eqnarray*}
Note that the coefficient $P_k$ is independent of $j$ if $k > j$. Thanks to the construction
of the eigenvectors in (\ref{U_0}) and (\ref{U_j}), the explicit expressions for coefficients $R_j$ and $P_k$
are given by
\begin{equation}
\label{coefficient-R}
R_j = \alpha_j^4 \left( \sum_{i=j}^N \frac{1}{\alpha_i^2} \right)
\left( \sum_{i=j+1}^N \frac{1}{\alpha_i^2} \right)
\left( \frac{1}{\alpha_j^2} - \sum_{i=j+1}^N \frac{1}{\alpha_i^2} \right)
\int_0^\infty \phi (\phi')^3 dx,
\end{equation}
and
\begin{equation}
\label{coefficient-P}
P_k = \alpha_k^2 \left( \sum_{i=k}^N \frac{1}{\alpha_i^2} \right)
\left( \sum_{i=k+1}^N \frac{1}{\alpha_i^2} \right)
\int_0^\infty \phi (\phi')^3 dx.
\end{equation}
It follows from (\ref{positive-elements}) and (\ref{coefficient-P}) that
$M_j > 0$ and $P_k < 0$ since $\phi (\phi')^3 < 0$ on $\mathbb{R}^+$.
Also it follows from (\ref{constraint}) and (\ref{coefficient-R}) that $R_1 = 0$.

The following lemma states that the zero equilibrium point is nonlinearly unstable
in the reduced system (\ref{normal-form-time}) with the Hamiltonian (\ref{energy-cube}).

\begin{lemma}
\label{Ham_instability}
There exists $\epsilon>0$ such that for every sufficiently small
$\delta > 0$, there is an initial point $(\gamma(0),\beta(0)) \in \mathbb{R}^{N-1} \times \mathbb{R}^{N-1}$ with
$\| \gamma(0)\| + \| \beta(0) \| \leq \delta$ such that
the unique solution of the reduced Hamiltonian system (\ref{normal-form-time})
with (\ref{energy-cube}) satisfies for some $t_0 > 0$:
$\| \gamma(t_0) \| = \epsilon$ and $\| \gamma(t) \| > \epsilon$ for $t > t_0$.
Moreover, if $\epsilon > 0$ is small then $t_0 = \mathcal{O}(\epsilon^{-1/2})$,
$\gamma(t) = \mathcal{O}(\epsilon)$, and
$\beta(t) = \mathcal{O}(\epsilon^{3/2})$ for all $t \in [0,t_0]$.
\end{lemma}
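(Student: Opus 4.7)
My plan is to reduce the $(N-1)$-dimensional Hamiltonian system (\ref{normal-form-time})--(\ref{energy-cube}) to an invariant one-dimensional subsystem along a specific direction $\gamma^\ast \in \mathbb{R}^{N-1}$, then to exploit the explicit blow-up of the resulting scalar ODE $\ddot{s}=|\lambda|s^2$ together with its self-similar scaling to obtain the required escape trajectory at the prescribed scales.

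First I would identify the direction $\gamma^\ast$. The cubic form
\begin{equation*}
U(\gamma) := -2\sum_{j=2}^{N-1} R_j \gamma_j^3 -6\sum_{j=1}^{N-1}\sum_{k>j}^{N-1} P_k\,\gamma_j\gamma_k^2
\end{equation*}
is homogeneous of degree $3$, so $U(-\gamma)=-U(\gamma)$. Since $P_k<0$ for every $k=2,\dots,N-1$ by (\ref{coefficient-P}), $U$ is not identically zero and hence takes strictly negative values. Let $M=\mathrm{diag}(M_1,\dots,M_{N-1})$, which has strictly positive entries by (\ref{positive-elements}), and let $\gamma^\ast$ minimize $U$ on the ellipsoid $\{\gamma : \langle M\gamma,\gamma\rangle=1\}$. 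By Lagrange multipliers and Euler's identity for homogeneous cubics, $\nabla U(\gamma^\ast)=\lambda M\gamma^\ast$ with $\lambda=3U(\gamma^\ast)<0$.

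Next I would verify that the two-dimensional subspace $\{(s\gamma^\ast, v\gamma^\ast) : s,v\in\mathbb{R}\}$ is invariant under the Hamiltonian flow. By homogeneity $\nabla U(s\gamma^\ast)=s^2\nabla U(\gamma^\ast)=\lambda s^2 M\gamma^\ast$, so the equations $\dot{\gamma}=\beta$, $M\dot{\beta}=-\nabla U(\gamma)$ preserve this ray and reduce on it to the scalar second-order ODE $\ddot{s}=|\lambda|s^2$ with conserved energy $E_0=\tfrac{1}{2}\dot{s}^2-\tfrac{|\lambda|}{3}s^3$. For $s(0),\dot{s}(0)>0$ with $E_0>0$, the solution increases monotonically and blows up in finite time.

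Then I would apply the self-similar scaling $(s,t)\mapsto(\mu s,\mu^{-1/2}t)$, $\mu>0$, which preserves this ODE. Fix once and for all a reference solution $s_0(\tau)$ with $s_0(0),\dot{s}_0(0)>0$, $E_0>0$, attaining the value $1/\|\gamma^\ast\|$ at some first time $\tau_\ast=\mathcal{O}(1)$. Setting $s(t):=\epsilon\, s_0(\sqrt{\epsilon}\,t)$ produces a solution of $\ddot{s}=|\lambda|s^2$ with $s(0)=\epsilon s_0(0)$, $\dot{s}(0)=\epsilon^{3/2}\dot{s}_0(0)$, reaching $s(t_0)=\epsilon/\|\gamma^\ast\|$ at $t_0=\tau_\ast/\sqrt{\epsilon}=\mathcal{O}(\epsilon^{-1/2})$, with $s(t)=\mathcal{O}(\epsilon)$, $\dot{s}(t)=\mathcal{O}(\epsilon^{3/2})$ on $[0,t_0]$ and $s$ still strictly increasing at $t_0$. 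Pulling back via $\gamma(t):=s(t)\gamma^\ast$, $\beta(t):=\dot{s}(t)\gamma^\ast$ gives an exact solution of the full Hamiltonian system delivering all required bounds. Given any prescribed small $\delta>0$, I would take $\epsilon$ comparable to (and at most a fixed multiple of) $\delta$ so that $\|\gamma(0)\|+\|\beta(0)\|\leq\delta$.

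The main obstacle is confirming the strict inequality $\lambda<0$, which is what turns the reduced ODE into the unstable equation $\ddot{s}=|\lambda|s^2$ rather than the stable oscillator $\ddot{s}=-|\lambda|s^2$; this rests on $U$ having a genuine negative direction, which follows from the sign information $P_k<0$ in (\ref{coefficient-P}) combined with the odd-degree homogeneity of $U$, and does not need the subtler coefficient identity $R_1=0$ from (\ref{coefficient-R}). A secondary delicate point is reconciling the size $\delta$ of the initial data with the escape scale $\epsilon$: the estimates $t_0=\mathcal{O}(\epsilon^{-1/2})$ and $\beta(t)=\mathcal{O}(\epsilon^{3/2})$ hold naturally only when the initial data is of size comparable to the escape size, which is ensured by choosing $\epsilon\lesssim\delta$.
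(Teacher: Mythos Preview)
Your argument is correct and lands on the same scalar cusp equation $\ddot s = c s^2$, $c>0$, as the paper, but the route to the invariant ray is genuinely different. The paper works explicitly: it first observes that the hyperplane $\gamma_3=\cdots=\gamma_{N-1}=0$ is invariant for the second-order system (\ref{second-order-system}), then uses the identity $R_1=0$ (a consequence of the balance constraint (\ref{constraint})) to reduce the remaining $2\times2$ system (\ref{system-1-2}), and finally solves the algebraic quadratic $2M_1P_2C^2+M_1R_2C-M_2P_2=0$ for the slope of the invariant line $\gamma_1=C\gamma_2$; the discriminant is positive simply because $M_1,M_2>0$. Your variational device---minimize the odd homogeneous cubic $U$ on the $M$-ellipsoid and invoke Euler's identity to get $\nabla U(\gamma^\ast)=\lambda M\gamma^\ast$ with $\lambda=3U(\gamma^\ast)<0$---is more robust: it needs only $M\succ0$ and $U\not\equiv0$, and makes no use of $R_1=0$. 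The paper's explicit construction, on the other hand, pins down the unstable direction concretely enough to connect it to the translation mode (cf.\ Remark~\ref{remark-a} and the discussion after Lemma~\ref{lem-correction}). Your treatment of the $\delta$--$\epsilon$ reconciliation is slightly looser than the paper's choice $\delta\in(0,A\epsilon^{3/2})$, but since the trajectory on the ray is monotone and blows up, the first assertion of the lemma holds for any fixed $\epsilon$ and arbitrarily small $\delta$, and the ``moreover'' scalings follow from the self-similar rescaling exactly as you indicate.
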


\begin{proof}
First, we claim that there exists an invariant subspace of solutions of
the reduced Hamiltonian system (\ref{normal-form-time}) with (\ref{energy-cube}) given by
\begin{equation}
\label{reduction}
S := \{ \gamma_1 = C \gamma_2,
\gamma_3 =\gamma_4 =  \dots = \gamma_{N-1} =0\}
\end{equation}
for some constant $C \neq 0$. Indeed, eliminating $\beta_j$, we close
the reduced system (\ref{normal-form-time}) on $\gamma_j$ for every $j=1, \dots, N-1$:
\begin{equation}
\label{second-order-system}
M_j \frac{d^2 \gamma_j}{d t^2} = 6 R_j \gamma_j^2 + 12 \sum_{i=1}^{j-1} P_j \gamma_i \gamma_j + 6 \sum_{k=j+1}^{N-1} P_k \gamma_k^2.
\end{equation}
It follows directly that $\gamma_3 = \gamma_4 = \dots = \gamma_{N-1} =0$ is an invariant solution
of the last $(N-3)$ equations of system (\ref{second-order-system}).
Since $R_1 = 0$ from (\ref{constraint}) and (\ref{coefficient-R}),
the first two (remaining) equations of system (\ref{second-order-system}) are given by
\begin{eqnarray}
\label{system-1-2}
\left\{ \begin{array}{l}  M_1 \frac{d^2 \gamma_1}{d t^2} = 6 P_2 \gamma_2^2, \\
M_2 \frac{d^2 \gamma_2}{d t^2} = 6 R_2 \gamma_2^2 + 12 P_2 \gamma_1 \gamma_2,
\end{array} \right.
\end{eqnarray}
The system is invariant on the subspace $S$ in (\ref{reduction}) if the constant $C$ is a solution
of the following quadratic equation:
$$
2 M_1 P_2  C^2 + M_1 R_2 C - M_2 P_2 = 0.
$$
The quadratic equation admits two nonzero real solutions $C$
if the discriminant is positive:
$$
\mathcal{D} := M_1^2 R_2^2 + 8 M_1 M_2 P_2^2 > 0,
$$
which is true thanks to the positivity of $M_1$ and $M_2$ in (\ref{positive-elements}).
The reduced system (\ref{second-order-system}) on the invariant subspace (\ref{reduction})
yields the following scalar second-order equation:
\begin{equation}
\label{gamma_1_evolution}
C^2 M_1 \frac{d^2 \gamma_1}{d t^2} - 6 P_2 \gamma_1^2 = 0,
\end{equation}
where $C \neq 0$, $M_1 > 0$ and $P_2 < 0$. The zero equilibrium $(\gamma_1,\dot{\gamma}_1) = (0,0)$
is a cusp point so that it is unstable in the nonlinear equation (\ref{gamma_1_evolution}).

Next, we prove the assertion of the lemma. For every sufficiently small $\delta>0$, we choose the initial point
$(\gamma(0), \beta(0)) \in \mathbb{R}^{N-1} \times \mathbb{R}^{N-1}$ in the invariant subspace
$S$ in (\ref{reduction}) satisfying $\|\gamma(0)\| + \|\beta(0)\| \leq \delta$.
Since $(0,0)$ is a cusp point in the reduced equation (\ref{gamma_1_evolution})
there exists a $t_0 > 0$ such that $\| \gamma(t_0) \| = \epsilon$ and $\| \gamma(t) \| > \epsilon$ for $t > t_0$
for any fixed $\epsilon > 0$.

Let us consider a fixed sufficiently small value of $\epsilon > 0$.
We have $\gamma(t) = \mathcal{O}(\epsilon)$ for $t \in [0,t_0]$ by the construction
Setting $\dot \gamma_1 = \beta_1$, we assume that $\beta_1(t) = \mathcal{O}(\epsilon^{3/2})$ for $t \in [0,t_0]$
by the choice of initial condition. The evolution equation (\ref{gamma_1_evolution}) implies that
for every $t\in [0, t_0]$ there is an $(\epsilon,\delta)$-independent constant $A>0$ such that
\begin{eqnarray*}
\left\{ \begin{array}{l}
|\gamma_1(t)| \leq \left| \int_0^t \beta_1(s) ds \right| + \left|\gamma_1(0) \right| \leq A \epsilon^{3/2} t_0 + \delta \\
|\beta_1(t)| \leq A \left| \int_0^t \gamma_1^2(s) ds \right| +
\left|\beta_1(0) \right| \leq A \epsilon^{2} t_0 + \delta.
\end{array} \right.
\end{eqnarray*}
If $\delta \in (0,A\epsilon^{3/2})$, then $\gamma_1(t) = \mathcal{O}(\epsilon)$ and $\beta_1(t) = \mathcal{O}(\epsilon^{3/2})$
remains for $t \in [0,t_0]$ with $t_0 = \mathcal{O}(\epsilon^{-1/2})$. The assertion of the lemma is proven.
\end{proof}

By Lemma \ref{Ham_instability}, there exists a trajectory of the finite-dimensional system
(\ref{normal-form-time}) near the zero equilibrium which leaves the $\epsilon$-neighborhood
of the zero equilibrium over the time span $[0,t_0]$ with
$t_0 = \mathcal{O}(\epsilon^{-1/2})$. Moreover, we have $\gamma(t) = \mathcal{O}(\epsilon)$ and
$\beta(t) = \mathcal{O}(\epsilon^{3/2})$ for every $t \in [0,t_0]$. This scaling suggests
to consider the following region in the phase space $\mathbb{R}^{N-1} \times \mathbb{R}^{N-1}$
of the evolution system (\ref{a_0_system-U-W}) in variables $(c, b)$:
\begin{equation}
\label{bound-coefficients-apriori}
\| c(t) \| \leq A \epsilon, \quad \| b(t) \| \leq A \epsilon^{3/2}, \quad t \in [0,t_0],
\end{equation}
where $t_0 \leq A \epsilon^{-1/2}$,
for an $\epsilon$-independent constant $A > 0$.
Vectors $(c,b)$ in the region (\ref{bound-coefficients-apriori})
still satisfy the bound (\ref{remainder_terms_bound}), hence
the decompositions (\ref{a_0_orth_decomposition}) and (\ref{a_0_r1})
remain valid due to the bound (\ref{bound-omega-U-W}) in Lemma \ref{lem-remainder-last}.
The following result proved in \cite{KP1} shows that a trajectory
of system (\ref{a_0_system-U-W}) follows closely to the trajectory of the finite-dimensional system (\ref{normal-form-time})
in the region (\ref{bound-coefficients-apriori}).

\begin{lemma}
\label{lem-correction}
For $\epsilon > 0$ sufficiently small,
assume that the remainder terms of the solution $\Psi$ decomposed as (\ref{a_0_orth_decomposition}) and
(\ref{a_0_r1}) satisfy (\ref{remainder_terms_bound}) and let $(\gamma,\beta) \in C^1([0,t_0],\mathbb{R}^{N-1} \times \mathbb{R}^{N-1})$
be a solution to the reduced system (\ref{normal-form-time}) in the region (\ref{bound-coefficients-apriori}).
The solution $(c,b) \in C^1([0,t_0],\mathbb{R}^{N-1} \times \mathbb{R}^{N-1})$
to the evolution system (\ref{a_0_system-U-W}) with initial datum $c(0) = \gamma(0)$ and $b(0) = \beta(0)$
remains in the region (\ref{bound-coefficients-apriori}) and there exist a generic $\epsilon$-independent constant
$A > 0$ such that
\begin{equation}
\label{bound-coefficients}
\| c(t) - \gamma(t) \| \leq A \epsilon^{3/2}, \quad \| b(t) - \beta(t) \| \leq A \epsilon^2, \quad t \in [0,t_0].
\end{equation}
\end{lemma}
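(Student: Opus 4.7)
The plan is to compare the full system (\ref{a_0_system-U-W}) to the reduced system (\ref{normal-form-time}) at the level of the differences
$\eta_c(t) := c(t) - \gamma(t)$ and $\eta_b(t) := b(t) - \beta(t)$, estimate the residuals using Lemmas \ref{lem-parameters} and \ref{lem-remainder-last}, and then run a Gronwall/Duhamel-type comparison over the interval $[0,t_0]$ with $t_0 = \mathcal{O}(\epsilon^{-1/2})$. A bootstrap at the end confirms that the trajectory $(c,b)$ stays inside the region (\ref{bound-coefficients-apriori}).

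First I rewrite the full system as $\dot c_j = b_j + M_j^{-1} R_c^{(j)}$ and $\dot b_j = M_j^{-1} R_b^{(j)}$, and split
\begin{equation*}
 R_b^{(j)} = -\frac{\partial H_0}{\partial \gamma_j}\bigg|_{(\gamma,\beta) = (c,b)} + \widetilde R_b^{(j)},
\end{equation*}
collecting into $\widetilde R_b^{(j)}$ all contributions from $\dot \omega$, $\dot \theta - \omega$, $U^\perp$, $W^\perp$, together with the cubic-in-$U$ and $\Phi W^2$ pieces of $\langle U_\omega^{(j)}, R_W \rangle$ that are not captured by the cubic Hamiltonian (\ref{energy-cube}). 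On the a priori region (\ref{bound-coefficients-apriori}), choosing $\delta = \mathcal{O}(\epsilon^{3/2})$ makes the control from Lemmas \ref{lem-parameters} and \ref{lem-remainder-last} yield
\begin{equation*}
 |\omega - 1| + \| U^\perp + i W^\perp\|_{H^1(\Gamma)} \lesssim \epsilon^{3/2}, \quad
 |\dot \omega| \lesssim \|U\|_{H^1}\|W\|_{H^1} \lesssim \epsilon^{5/2}, \quad
 |\dot\theta - \omega| \lesssim \epsilon^2.
\end{equation*}
Combined with the explicit expressions in (\ref{R_U_W_terms}) and the Banach algebra property of $H^1(\Gamma)$, this gives $\| R_c^{(j)}\| + \|\widetilde R_b^{(j)}\| \lesssim \epsilon^{5/2}$ uniformly on $[0,t_0]$.

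Subtracting the reduced equations from the full ones produces the linear comparison system
\begin{equation*}
 \dot \eta_c = \eta_b + \rho_c, \qquad \dot \eta_b = K(c,\gamma)\, \eta_c + \rho_b,
\end{equation*}
with $\|\rho_c\| + \|\rho_b\| \lesssim \epsilon^{5/2}$ and, since $\partial H_0/\partial \gamma$ is quadratic in $\gamma$ with coefficients of order $\|c\| + \|\gamma\| = \mathcal{O}(\epsilon)$, a matrix $K$ of size $\mathcal{O}(\epsilon)$. With zero initial data $\eta_c(0) = \eta_b(0) = 0$, the homogeneous part is a planar system with characteristic rate $\sqrt{\epsilon}$, while the forcing is of size $\epsilon^{5/2}$. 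A standard Duhamel estimate on the scaled energy $A(t) := \|\eta_c(t)\| + \epsilon^{-1/2}\|\eta_b(t)\|$ then produces a particular response of size $\mathcal{O}(\epsilon^{3/2})$, multiplied by the bounded factor $\cosh(\sqrt\epsilon\, t_0) = \mathcal{O}(1)$ since $\sqrt\epsilon\, t_0 = \mathcal{O}(1)$. Unpacking $A(t) \lesssim \epsilon^{3/2}$ gives precisely the bounds $\|\eta_c(t)\| \lesssim \epsilon^{3/2}$ and $\|\eta_b(t)\| \lesssim \epsilon^2$ claimed in (\ref{bound-coefficients}). The bootstrap is then automatic: $\|c(t)\| \leq \|\gamma(t)\| + \|\eta_c(t)\|$ and $\|b(t)\| \leq \|\beta(t)\| + \|\eta_b(t)\|$ preserve (\ref{bound-coefficients-apriori}) with possibly enlarged but $\epsilon$-independent constant $A$.

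The main obstacle I anticipate is the bookkeeping needed to isolate cleanly the quadratic-in-$c$ part of $\langle U_\omega^{(j)}, R_W \rangle$ that matches $-\partial H_0/\partial \gamma_j$, so that $\widetilde R_b^{(j)}$ is genuinely of size $\epsilon^{5/2}$ and not the naive $\epsilon^2$ coming from a term like $\Phi U^2$. This forces one to expand $U = \sum c_i U^{(i)}_\omega + U^\perp$ and $W = \sum b_i W^{(i)}_\omega + W^\perp$, to absorb the $\dot\omega$-dependence of the basis $\{U^{(j)}_\omega\}$, and to use the sharp coercivity bound (\ref{bound-omega-U-W}) of Lemma \ref{lem-remainder-last} (via $\|b\|^2 + \|c\|^3 \lesssim \epsilon^3$) to bound the $U^\perp, W^\perp$ contributions. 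Once this accounting is in place, the Gronwall step is routine, being only a $2(N-1)$-dimensional linear ODE comparison.
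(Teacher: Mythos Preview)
The paper does not actually give a proof of this lemma; the sentence immediately preceding it says ``The following result proved in \cite{KP1} \ldots'' and no argument is supplied. Your sketch is essentially the approach one finds in \cite{KP1} (and the natural one for a finite-dimensional normal-form comparison): extract the quadratic-in-$c$ piece of $\langle U_\omega^{(j)}, R_W\rangle$ that matches $-\partial H_0/\partial\gamma_j$, bound all remaining residuals at order $\epsilon^{5/2}$ using Lemmas~\ref{lem-parameters} and~\ref{lem-remainder-last}, and close a Gronwall estimate on the scaled quantity $\|\eta_c\| + \epsilon^{-1/2}\|\eta_b\|$ over the horizon $t_0 = \mathcal{O}(\epsilon^{-1/2})$. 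The computation you outline checks out, including the key cancellation identifying $6\sum_{i,k}c_ic_k\langle \alpha^2\Phi_\omega U_\omega^{(j)},U_\omega^{(i)}U_\omega^{(k)}\rangle$ with $-\partial H_0/\partial\gamma_j|_{(c,b)}$ up to an $\mathcal{O}(|\omega-1|\cdot\|c\|^2)=\mathcal{O}(\epsilon^{7/2})$ discrepancy.

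One point worth making explicit: your use of $\delta = \mathcal{O}(\epsilon^{3/2})$ is an \emph{additional} hypothesis not stated in the lemma but imported from the surrounding proof of Theorem~\ref{instability_result}, where the initial perturbation is taken with $\delta \in (0,A\epsilon^{3/2})$. This is not cosmetic. Without it the $\delta^2$ term in (\ref{bound-omega-U-W}) only yields $\|U^\perp+iW^\perp\|_{H^1}=\mathcal{O}(\epsilon)$, and then the cross term $\langle U_\omega^{(j)}, 12\alpha^2\Phi_\omega(\sum_i c_i U_\omega^{(i)})U^\perp\rangle$ in $\widetilde R_b^{(j)}$ is only $\mathcal{O}(\epsilon^2)$, which after Gronwall over $t_0=\mathcal{O}(\epsilon^{-1/2})$ gives $\|\eta_c\|=\mathcal{O}(\epsilon)$ rather than the claimed $\mathcal{O}(\epsilon^{3/2})$. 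So your sketch is correct, but only because you are reading the lemma in the context in which it is applied; the statement as written in the paper is slightly under-hypothesised on this point.
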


\begin{remark}
By the bound (\ref{bound-coefficients}), $c_1(t)$ closely follows $\gamma_1(t)$, whereas
by the first equation of system (\ref{system-1-2}) with $M_1 > 0$ and $P_2 < 0$,
the map $t \mapsto \gamma_1(t)$ is monotonically decreasing if $\gamma_1'(0) \leq 0$.
Thanks to the correspondence between $c_1(t)$ and $a(t)$ in Remark \ref{remark-a},
this corresponds to the irreversible drift of the parameter $a(t)$ towards smaller values
in Corollary \ref{corollary-a}. Compared to Corollary \ref{corollary-a}, the irreversible drift
is observed from the half-soliton state with $a(0) = a_0 = 0$.
\end{remark}

\subsection{Proof of Theorem \ref{instability_result}}

Let us consider the unstable solution of the reduced system (\ref{normal-form-time})
according to Lemma \ref{Ham_instability}. This solution belongs to the region (\ref{bound-coefficients-apriori})
and, by Lemma \ref{lem-correction}, the correction terms satisfy (\ref{bound-coefficients}).
Therefore, the solution of system (\ref{a_0_system-U-W}) still satisfies the bound (\ref{bound-coefficients-apriori})
over the time span $[0,t_0]$ with $t_0 = \mathcal{O}(\epsilon^{-1/2})$.

For every $\epsilon > 0$, we set $\delta \in (0,A\epsilon^{3/2})$ and use Lemma \ref{lem-remainder-last}
to control the terms $\omega$, $U^\perp$ and $W^\perp$ by the bound
(\ref{bound-omega-U-W}). Therefore, the solution given by decompositions (\ref{a_0_orth_decomposition}) and (\ref{a_0_r1})
satisfies the bound (\ref{delta_0_bound}) for $t \in [0,t_0]$.

Since the solution $\gamma$ to the reduced system (\ref{normal-form-time})
grows in time and reaches the boundary in the region (\ref{bound-coefficients-apriori})
by Lemma \ref{Ham_instability}, the bound (\ref{bound-coefficients}) implies that
the same is true for the solution $c(t)$ of system (\ref{a_0_system-U-W}).
Hence, for every fixed $\epsilon > 0$ (sufficiently small), the initial data $\Psi_0$
satisfying the bound (\ref{a_0_initial_datum}) with $\delta \in (0,A\epsilon^{3/2})$ generates
the unique solution $\Psi$ of the NLS equation (\ref{eq1}) which
reaches and crosses the boundary in (\ref{orbital-instab}) for some $t_0 = \mathcal{O}(\epsilon^{-1/2})$.
Theorem \ref{instability_result} is proved.

\section{Numerical verification}
\label{sec-numerics}

Here we describe numerical experiments that illustrate the implications of Theorems \ref{main_theorem_1} and \ref{instability_result}.
Note that in all figures, we plot $U = (u_1,u_2,\ldots,u_N)$ with the components
$$
u_j(x) = \alpha_j \psi_j(x).
$$
Under the boundary conditions in (\ref{H1}), the solution $U$ is continuous across the vertex.
The shifted state of Lemma \ref{solutions-II} with parameter $a \in \mathbb{R}$ is given in the variable $U$
by $\alpha \Phi(\cdot;a)$.

\subsection{Numerical method}

We briefly describe the numerical method used to simulate time-dependent solutions
of the NLS equation (\ref{eq1}).
Each semi-infinite edge is truncated to a finite length $L$, with Dirichlet boundary
conditions imposed at the leaf endpoint. The spatial derivatives are discretized using
second-order centered differences. The derivative boundary conditions in~\eqref{H2}
are enforced using ghost points. That is, if the spacing between the grid points
on edge $j$ is given by $\Delta x_j$, then the grid points are located at
$x_k^{(j)}$ = $(k-\frac{1}{2})\Delta x_j$. There is no grid point at the vertex:
instead along each edge there is a \emph{ghost point} located at $x_0^{(1)} = \Delta x_1/2$
and $x_0^{(j)} = -\Delta x_j/2$ for $j = 2,...,N$.
The values at the ghost points are determined by enforcing the discretized  boundary condition
with the value at the vertex approximated by linear interpolation.

Time evolution is calculated using a second-order split-step method following Weidemann and Herbst~\cite{WeiHer:1986}.
The linear and nonlinear parts of the evolution are handled separately, with an explicit phase rotation
with time step $\Delta t$ sandwiched between two steps of the linear part with time steps of $\Delta t/2$
using the Crank-Nicholson scheme.

While the simulations are primarily concerned with the behavior of solutions concentrated away from the computational boundary,
the evolution naturally gives rise to radiation, which quickly propagates to the boundary. This radiation
interacts with the computational boundaries and leads to numerical instability that effects the computational results.
Following Nissen and Kreiss~\cite{NisKre2015}, perfectly matched layers are used at the leaf
endpoints to absorb this radiation. In practice, this leads to a modification of the discretized
Laplacian at a finite number of points near the leaf vertices.

The standard care is taken to ensure the accuracy of the numerical simulations including quantitative convergence study, and the tracking of conserved quantities.

\subsection{Experiments with eigenfunction perturbation}

We consider the balanced star graph with one incoming edge and two outgoing edges.
For simplicity, we also assume $\alpha_2=\alpha_3$.
For the shifted state of Lemma \ref{solutions-II},
the spectrum of linearized operator $L_+(\omega,a)$ is
shown on Fig. \ref{fig-2}. In particular, the operator possesses
a negative eigenvalue $\lambda_0=-3$ for all $a$, and a second eigenvalue $\lambda_1(a)$ for $a<a_*$,
which is positive for $0<a<a_*$ and negative for $a < 0$.
Thus the Morse index suggests that the shifted state
is linearly stable when $a>0$ and linearly unstable when $a<0$.

Let $U_a$ be the eigenfunction of the operator $L_+(\omega,a)$ associated with
the eigenvalue $\lambda_1$, which exists for every $a < a_*$. Let it be normalized
by $\norm{U_a}{2} = 1$. We consider the initial datum to the NLS equation (\ref{eq1}) of the form
\begin{equation}
\Psi_0 = \Phi(\cdot;a) + \epsilon U_a.
\label{initcond}
\end{equation}
We assume here that $U_a(x) = 0$ on edge one, $U_a(x) < 0$ on edge two, and $U_a(x) > 0$ on edge three.
Such an initial datum has the initial momentum $P(\Psi_0) = 0$
regardless of whether $\epsilon$ is real or has nonzero imaginary part.

We first present a simulation of the unstable shifted state with $a = -0.55$ and $\epsilon=0.1$, in which the non-monotonic part
lies on the two outgoing edges. The time-dependent solution
is plotted in Figure~\ref{fig:eig045pcolor}. The solution on edge three is initially slightly larger than on edge two.
This asymmetry grows until the solution has concentrated on edge three, and then propagates away from the vertex along edge three.
A lower-amplitude traveling wave, not visible on this plot, propagates away from the vertex on edge two.

\begin{figure}[htbp] 
   \centering
   \includegraphics[width=3in]{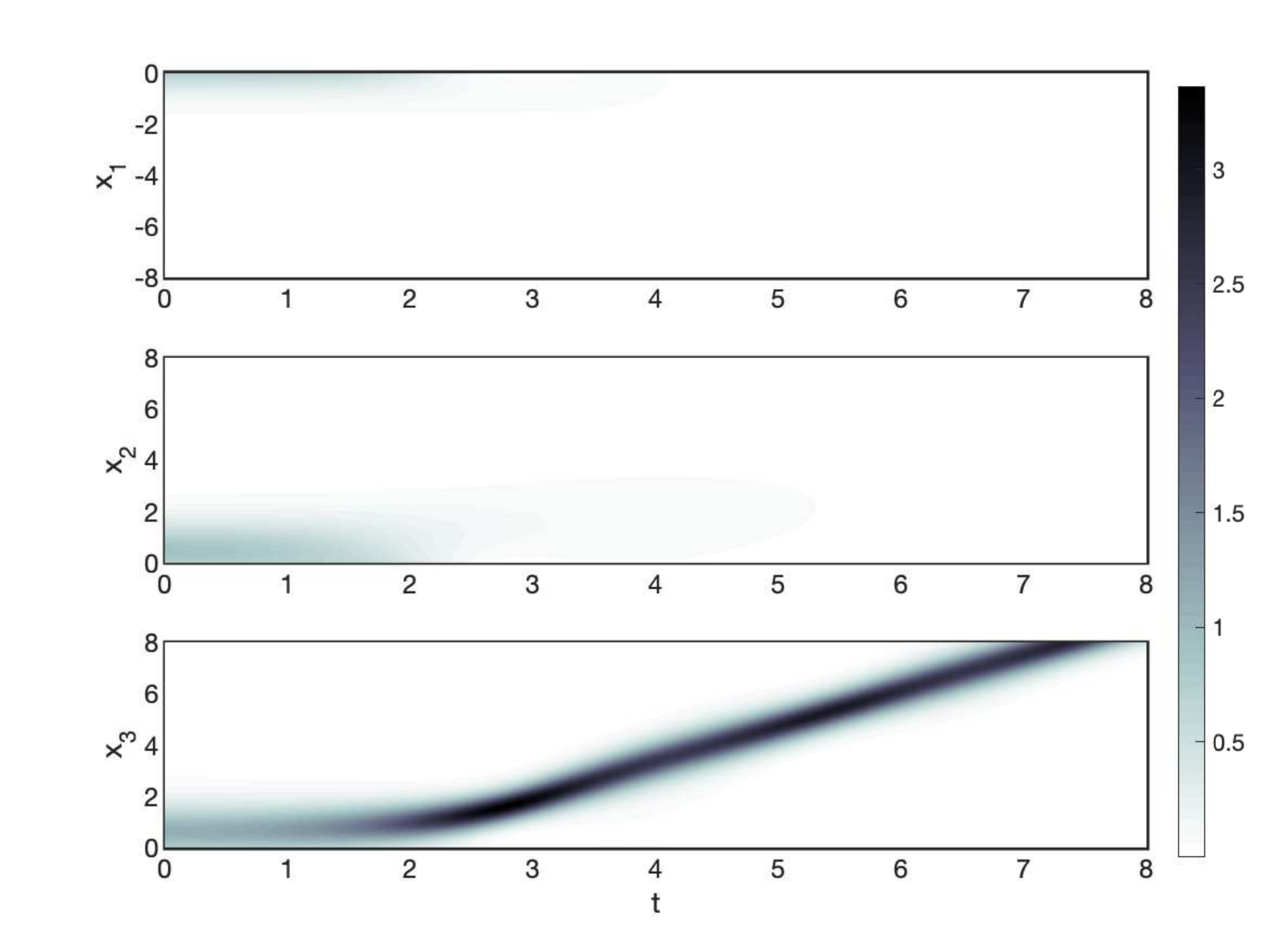}
   \caption{A numerical solution with initial datum~\eqref{initcond} for $a=-0.55$ and $\epsilon=0.1$.
   The colorbar corresponds to values of $\abs{u}^2$. The three panels correspond to the solution on edges 1, 2, and 3 going down.}
   \label{fig:eig045pcolor}
\end{figure}

\begin{figure}[htbp] 
   \centering
   \includegraphics[height=0.35\textwidth]{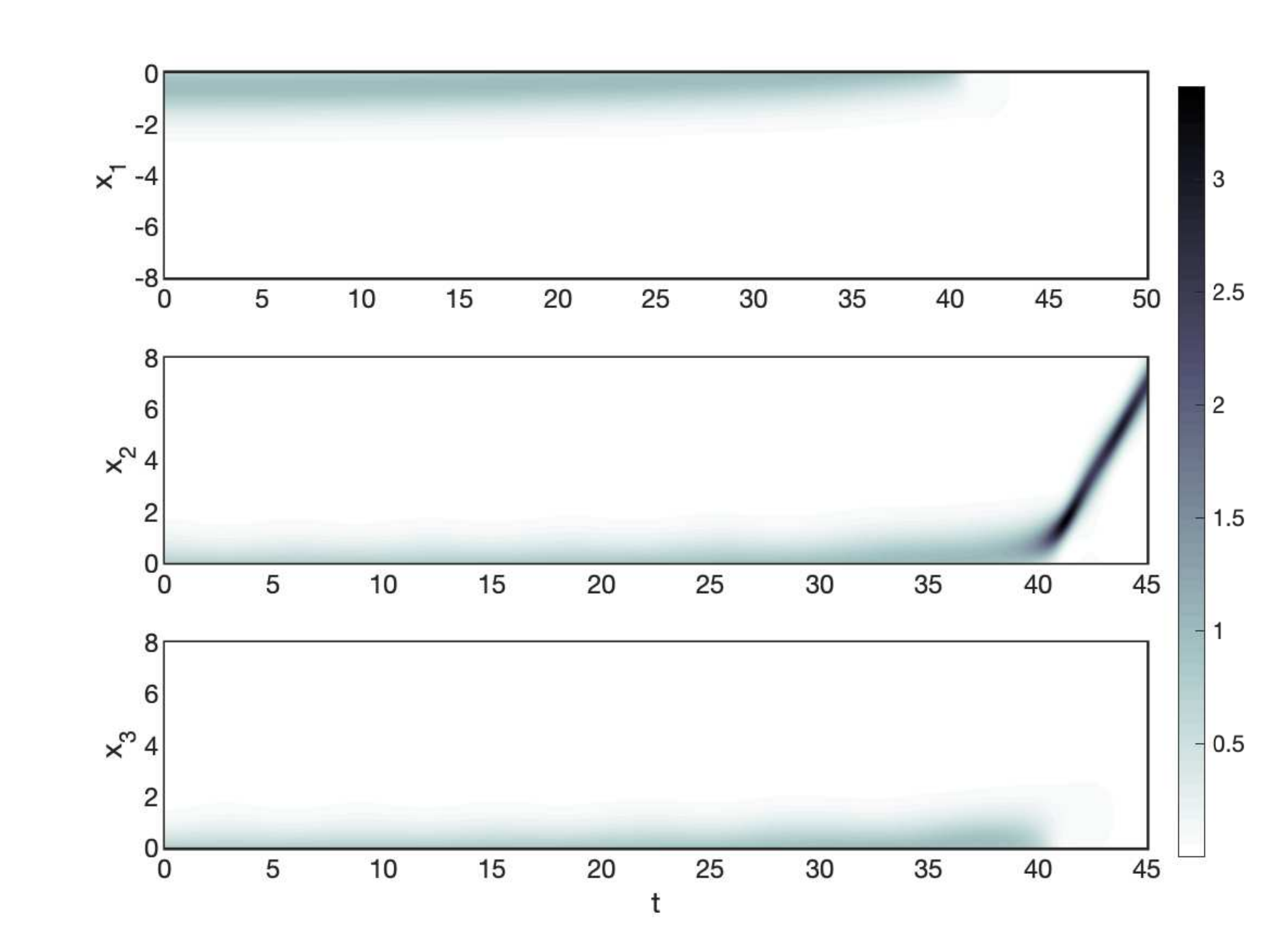}
   \includegraphics[height=0.35\textwidth]{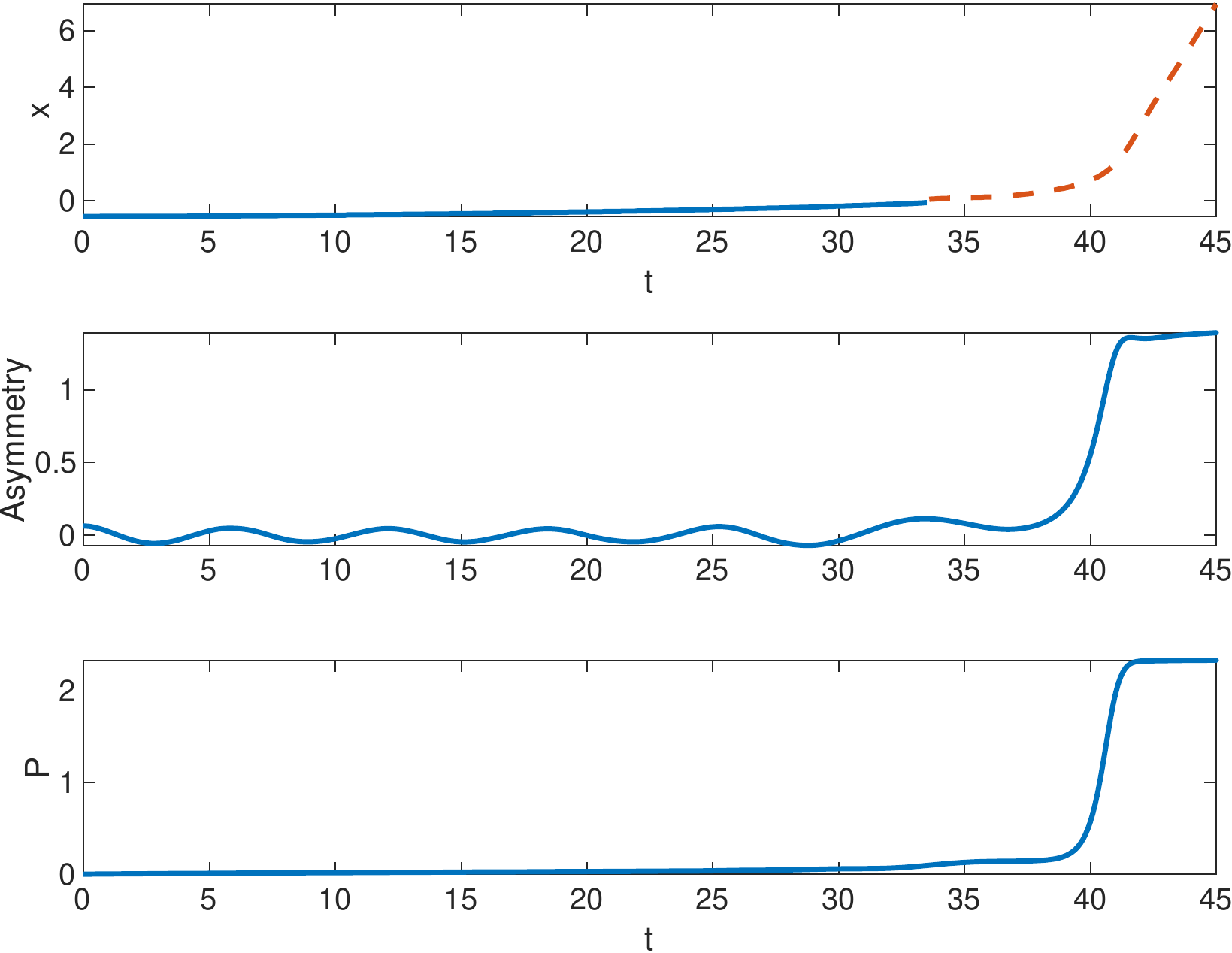}
   \caption{Left: A numerical solution with initial datum~\eqref{initcond} for $a=0.55$ and $\epsilon=0.1$.
   The colorbar corresponds to values of $\abs{u}^2$. Right: Postprocessed quantities form the same simulation.
   (Top) The position of the maximum of $u$. The solid line for $t<33.5$ describes the position on the incoming edge one.
   The dashed line for $t > 33.5$ shows the position of the maximum on edge two. (Middle) The asymmetry, defined as
   $\| u_2 \|_{L^2(\mathbb{R}^+)}-\| u_3 \|_{L^2(\mathbb{R}^+)}$.
   (Bottom) The momentum $P(\Psi)$ versus time $t$.}
   \label{fig:eig046pcolor}
\end{figure}

The behavior is more interesting when we consider the stable shifted state with $a =0.55>0$ and $\epsilon=0.1$, in which the non-monotonic part
lies on the only incoming edge. The shifted state is linearly stable but Theorem \ref{main_theorem_1}
predicts drift of this shifted state towards smaller
values of $a$, where Theorem \ref{instability_result} predicts nonlinear instability once the vertex is reached.
The left panels of Fig.~\ref{fig:eig046pcolor} show that for $0<t \lesssim 40$, the dynamics is very slow. After $t \approx 40$,
the shifted state quickly propagates away from the vertex along edge two.

The right panels of Fig.~\ref{fig:eig046pcolor} show some quantities post-processed from the simulation.
The top panel shows the location of the numerical maximum of $|u|$. At about $t=33.5$, the maximum crosses
from edge one to edge two at $x=0$. The second panel shows the $\| u_2 \|_{L^2(\mathbb{R}^+)} - \| u_3 \|_{L^2(\mathbb{R}^+)}$,
the difference between $L^2$ norms along the two outgoing edges, which is used as a proxy for
the amplitude of the eigenfunction perturbation. This quantity oscillates between positive and negative values
while the shifted state lies along the incoming edge, corresponding to stable evolution. It enters a period
of exponential growth for $38\lesssim t\lesssim 40$, once the shifted state has moved to the outgoing edges.
During this period, a high-amplitude state forms on edge two and a low-amplitude state on edge three.
The bottom panel shows that the momentum grows slowly until $t \approx 33.5$, then grows quickly,
before saturating at $t\approx 42$, and moving at constant momentum thereafter.

This numerical simulation shows that perturbations to a shifted state with $a>0$ grow sub-exponentially,
consistent with the drift instability of Theorem \ref{main_theorem_1}. Similarly, postprocessing the simulation
shown in Fig.~\ref{fig:eig045pcolor} with $a<0$ shows that the perturbation immediately grows at an exponential rate,
consistent with a linear instability proven in \cite{KP2}.

\subsection{Experiments with other perturbations}
\label{sec:anotherFamily}

Initial data of type~\eqref{initcond} only exist for $a < a_* \approx 0.66$ if $\omega = 1$. However we found almost
identical dynamics as above by perturbing a shifted state with $a > a_*$, by a function similar to $U_a$
which vanishes on the incoming edge, and takes the form $\pm c x e^{-\lambda x}$ on the two outgoing edges.
We will not report further on such simulations.

All initial data of type~\eqref{initcond} have zero initial momentum $P(\Psi_0) = 0$.
An interesting question is what happens when we apply a perturbation such that the initial datum
gives $P(\Psi_0) < 0$. Equation~\eqref{dmom_dt} implies that the map $t \mapsto P(\Psi)$ is increasing.
Therefore, the question is if the shifted states propagating along the incoming edge away from the vertex initially
can escape the drift instability. Note that the condition of Theorem \ref{main_theorem_1} requires $P(\Psi_0) > 0$.

We construct perturbed shifted states as follows. Choose  $\mu \in \mathbb{C}$ and define $\Psi_0$ by
\begin{equation}
\left\{ \begin{array}{l}
\psi_1(x) = \alpha_1^{-1} e^{\mu x}\phi(x+a) \\
\psi_2(x) = \alpha_2^{-1} e^{2\mu x} \phi(x+a), \\
\psi_3(x) = \alpha_3^{-1} \phantom{e^{2\mu x}} \phi(x+a).
\end{array} \right.
\label{nonconserving}
\end{equation}
If $\mu$ is small, then $\Psi_0 \in H_\Gamma^2$ and $\lVert \Psi_0 - \Phi(\cdot;a) \rVert$ is small.
The initial datum has nonzero initial momentum $P(\Psi_0) \neq 0$ if ${\rm Im}(\mu) \neq 0$.

We perform such a simulation with $a=-1$ and pure imaginary $\mu=0.1 i$. This slowly modulates
the phase of the shifted state while keeping the amplitude at each point unchanged.
The solution has its maxima on the outgoing edges and has negative initial momentum $P(\Psi_0) < 0$,
so initially it propagates toward the vertex. Fig.~\ref{fig:modified050pcolor} shows
that the numerical solution quickly concentrates on edge three and begins propagating away
from the vertex. This is more visible in the right panel, which shows the location of the
maximum value on edge three, which initially decreases, before increasing, and the momentum,
which is initially negative, but which rapidly becomes positive.

\begin{figure}[htbp] 
   \centering
   \includegraphics[height=0.35\textwidth]{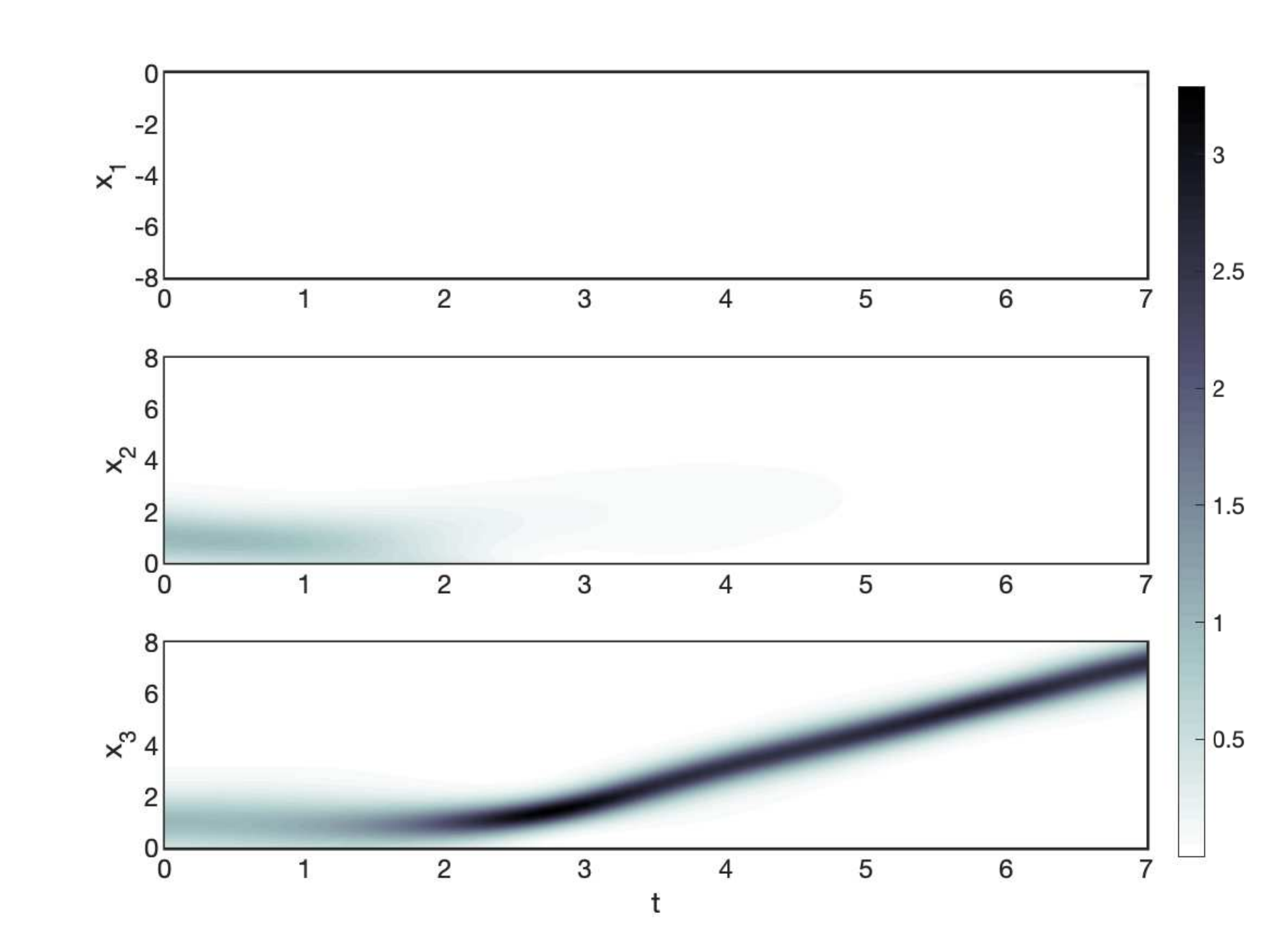}
   \includegraphics[height=0.35\textwidth]{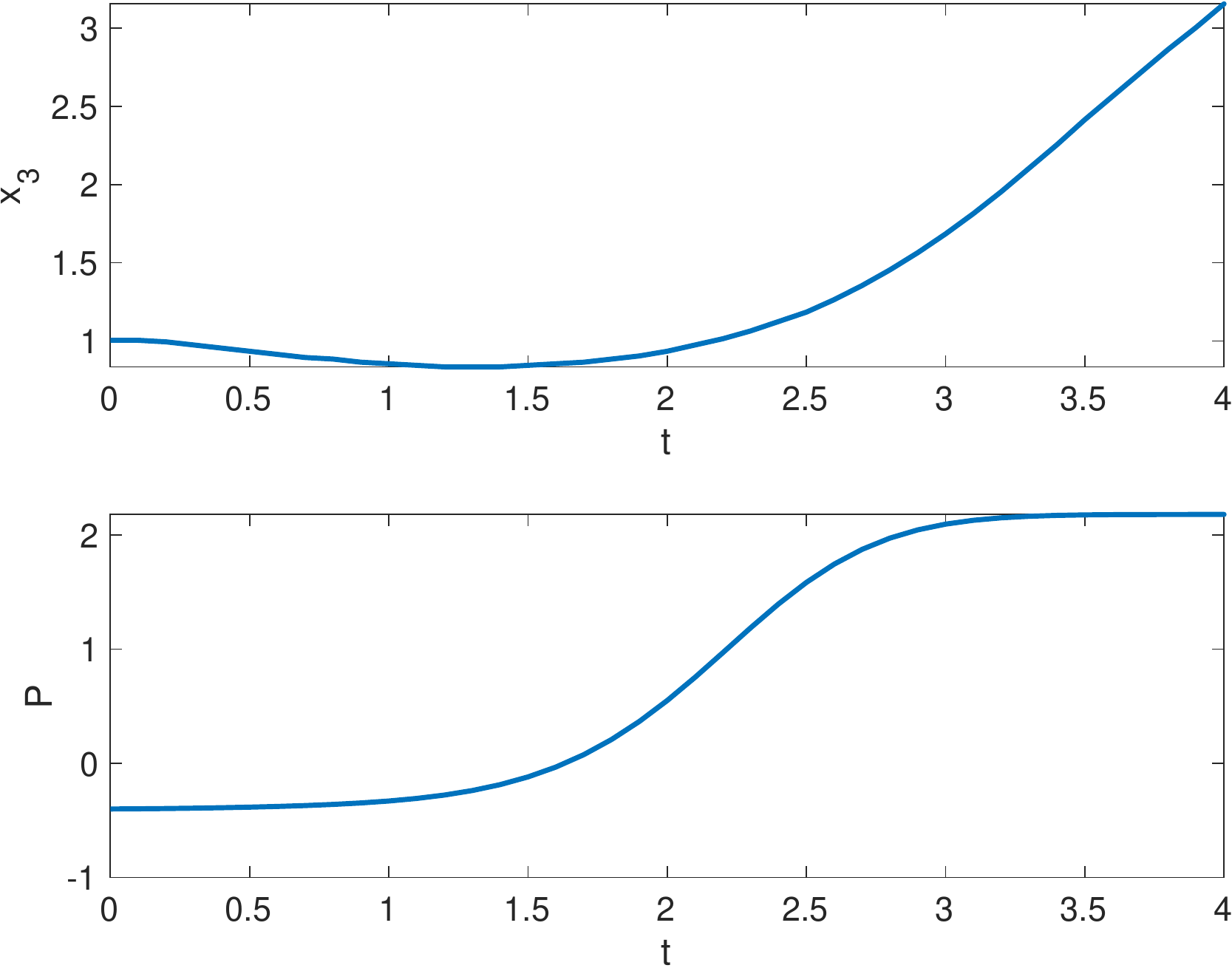}
   \caption{A numerical solution with initial datum \eqref{nonconserving}
   for $a = -1$ and $\mu = 0.1 i$.  Details as in Figure~\ref{fig:eig046pcolor}.}
   \label{fig:modified050pcolor}
\end{figure}

\begin{figure}[htbp] 
   \centering
   \includegraphics[width=3in]{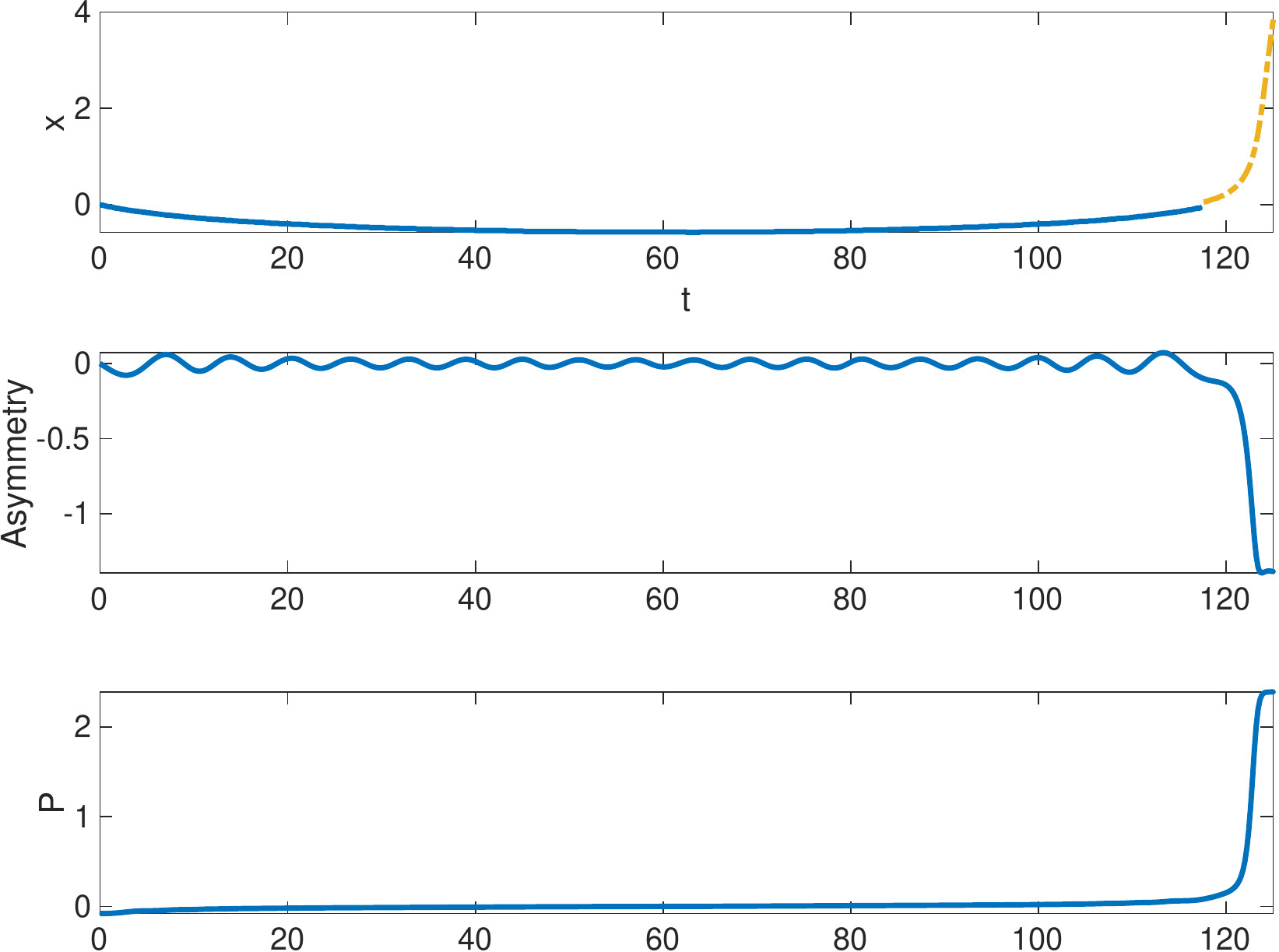}
   \caption{A numerical solution with initial datum \eqref{nonconserving}
   for $a = 0$ and $\mu = -0.02 i$. (Top) The position of the maximum of $\abs{u}$, on edge one
   for $t<117$ and on edge three (dashed) for $t>117$. (Middle) Asymmetry of the solution between the two outgoing edges.
   (Bottom) The momentum $P(\Psi)$ versus time $t$.}
   \label{fig:modified065post}
\end{figure}

A final simulation shows a solitary wave that travels away from the vertex along the incoming edge
before reversing direction, crossing the vertex and escaping to infinity along one of the outgoing edges;
shown in Fig.~\ref{fig:modified065post}. The simulation was performed with initial datum
of form~\eqref{nonconserving} with $a=0$ and $\mu = -0.02\ i$. The initial datum differs from
the half-soliton state $\Phi$ by $0.04$ in $L^2$-norm, hence it can be considered as a small perturbation of
the half-soliton state. The initial momentum is $P(\Psi_0)=-0.08$. The solution gradually slows down,
with the momentum vanishing at about $t=62$. The maximum crosses the vertex at about $t=117$ and
at this point the solution concentrates on edge 3 and the momentum begins increasing rapidly.

\section{Conclusion}
\label{sec-conclusion}

This paper concludes the study of the NLS equation on a balanced star graph originated in \cite{KP1,KP2}.
We have proven analytically and illustrated numerically the conjectures formulated in \cite{KP2},
namely that the perturbation that breaks a translational symmetry of the shifted state
induces instability of the shifted state in the time evolution. When the shifted state has monotonic tail
on the only incoming edge, it is spectrally unstable and the perturbations grow exponentially fast.
When the shifted state has monotonic tails on the $(N-1)$ outgoing edges, it is spectrally stable
but nonlinearly unstable. The perturbations do not grow in time but the center of mass of the shifted
state drifts slowly along the incoming edge towards the vertex. This drift is induced
by the irreversible growth of momentum in the time evolution due to the broken translational symmetry
at the vertex point. Once the center of mass for the shifted state reaches the vertex, the
perturbations start to grow faster, first algebraically and then exponentially.
The numerical simulations not only verify the outcomes of the nonlinear dynamics predicted by the main theorems
(Theorems \ref{main_theorem_1} and \ref{instability_result}) but also illustrate how generic
this phenomenon is on the balanced star graphs.

\end{document}